\newcommand{\Px}{ \mathbb{P} }
\newcommand{\Ex}{ \mathbb{E} }
\def\esssup_#1{\underset{#1}{\mathrm{ess\,sup\, }}}
\def\essinf_#1{\underset{#1}{\mathrm{ess\,inf\, }}}
\def\argmax_#1{\underset{#1}{\mathrm{arg\,max\, }}}
\def\argmin_#1{\underset{#1}{\mathrm{arg\,min\, }}}
\newcommand{\Fx}{\mathbb{F} }
\newcommand{\R}{\mathds{R}}
\newtheorem{theorem}{Theorem}[section]
\numberwithin{equation}{section}
\newtheorem{proposition}[theorem]{Proposition}
\newtheorem{remark}[theorem]{Remark}
\newtheorem{lemma}[theorem]{Lemma}
\newtheorem{corollary}[theorem]{Corollary}
\definecolor{Red}{rgb}{1.00, 0.00, 0.00}
\definecolor{DRed}{rgb}{0.5, 0.00, 0.00}
\definecolor{Blue}{rgb}{0.00, 0.00, 1.00}
\definecolor{Green}{rgb}{0.0, 0.4, 0.0}
\title{Optimal consumption under relaxed benchmark tracking and consumption drawdown constraint}
\author{
Lijun Bo \thanks{Email: lijunbo@ustc.edu.cn,  School of Mathematics and Statistics, Xidian University, Xi'an, 710126, China.}
\and
Yijie Huang \thanks{Email: huang1@mail.ustc.edu.cn, Department of Applied Mathematics, The Hong Kong Polytechnic University, Hung Hom, Kowloon, Hong Kong.}
\and
Kaixin Yan\thanks{Email: kaixinyan@stu.xmu.edu.cn, School of Mathematical Sciences, Xiamen University, Xiamen 361005, China.}
\and
Xiang Yu \thanks{Email: xiang.yu@polyu.edu.hk, Department of Applied Mathematics, The Hong Kong Polytechnic University, Hung Hom, Kowloon, Hong Kong.}
}
\date{\vspace{-0.5in}}
\begin{document}
\maketitle

\begin{abstract}

This paper studies an optimal consumption problem with both relaxed benchmark tracking and consumption drawdown constraint, leading to a stochastic control problem with dynamical state-control constraints. In our relaxed tracking formulation, it is assumed that the fund manager can strategically inject capital to the fund account such that the total capital process always outperforms the benchmark process, which is described by a geometric Brownian motion. We first transform the original regular-singular control problem with state-control constraints into an equivalent regular control problem with a reflected state process and consumption drawdown constraint. By utilizing the dual transform and the optimal consumption behavior, we then turn to study the linear dual PDE with both Neumann boundary condition and free boundary condition in a piecewise manner across different regions. Using the smooth-fit principle and the super-contact condition, we derive the closed-form solution of the dual PDE, and obtain the optimal investment and consumption in feedback form. We then prove the verification theorem on optimality by some novel arguments with the aid of an auxiliary reflected dual process and some technical estimates. Some numerical examples and financial insights are also presented. 

\vspace{0.1in}

\noindent\textbf{Keywords}: Consumption drawdown constraint, relaxed benchmark tracking, Neumann boundary condition, free boundary condition, reflected dual process, verification arguments.
\end{abstract}

\section{Introduction}\label{sec:intro}

In the wake of Merton's pioneer studies in \cite{Merton69,Merton1971}, the pursuit of optimal decision making in portfolio management and consumption plan via utility maximization has prompted significant growth. Theoretical enhancements have been developed to confront an array of emerging challenges originating from intricate market models, advanced performance metrics,  state and/or control constraints, and other pertinent aspects.

One burgeoning direction to generalize Merton's problem focuses on the influence of the past consumption peak on the current consumption plan. Large expenditures may psychologically lift up the agent's standard of living, thereby affecting the expected utility. In the seminal work, \cite{Dybvig95} formulates an infinite horizon utility maximization problem under ratcheting constraint on consumption $c_t\geq \sup_{s\leq t}c_s$, premising the situation with non-decreasing consumption rate.   \cite{Arun12} generalizes \cite{Dybvig95}'s consumption ratcheting model through a drawdown constraint requiring $c_t \geq \lambda \sup_{s\leq t}c_s$ with $\lambda\in[0,1]$. Unlike ratcheting—which rigidly prevents any consumption reduction (an extreme habit formation)—the drawdown constraint only adheres to a proportion of the past consumption peak, offering more flexibility in the path-dependent impact. Along this direction, fruitful research studies can be found by considering different variations and extensions. To name a few, \cite{Bahman2019} study a similar utility maximization problem under a drawdown constraint on the excessive dividend rate until the bankruptcy time by mandating the current consumption rate to stay above a fraction of the past consumption peak $c_t\geq \lambda \sup_{s\leq t}c_s$ with $\lambda\in[0,1]$. \cite{G2020} propose the shortfall averse preference to measure the performance of relative consumption on the ratio between the current consumption and the historical consumption running maximum process. \cite{JeonPark21} extend \cite{Bahman2019}'s analysis by employing the martingale duality method to reformulate the control problem as an infinite-horizon optimal stopping problem under a general utility function. Subsequently, \cite{JO22} generalize the approach in \cite{JeonPark21} to the finite-horizon case. \cite{DLPY22} propose a utility formulation based on the difference between current consumption and the historical consumption running maximum, deriving a closed-form optimal consumption policy that varies piecewisely with wealth level. \cite{Alb22, Alb23} study an optimal dividend problem under ratcheting and drawdown constraints, respectively. \cite{LYZ} extend the framework of \cite{G2020} by incorporating life insurance purchases under shortfall-averse preferences, adding a consumption drawdown constraint. Meanwhile, \cite{LYZ24} generalize the formulation of \cite{DLPY22} under S-shaped utility, explicitly modeling the loss aversion effect in relative consumption. Further extensions include that \cite{Liang23} incorporate both reference-level and drawdown constraints into the \cite{DLPY22}'s  framework, uncovering new financial implications.
\cite{Tanana23} refines the convex duality approach to derive a general duality theorem for optimal consumption under drawdown constraints in incomplete semimartingale markets. Recent work by \cite{CLYY23} revisits the finite-horizon problem of \cite{JeonPark21} on excessive consumption under drawdown constraints. Using PDE techniques, they characterize the associated time-dependent free boundaries in closed form.

Another important branch of research in optimal investment and consumption is concerned with the performance relative to an exogenous benchmark process, which might refer to the market index process, inflation rates, liabilities, etc.  Portfolio management problems with various types of benchmark tracking have been extensively studied over the past decades. \cite{Browne9} initiates the study of active portfolio management by maximizing the probability of reaching a target wealth level while outperforming a benchmark. Subsequently, \cite{Browne99} addresses the dual problem of minimizing the expected time to achieve a performance goal. \cite{Browne00} further generalizes these objectives by combining the expected reward maximization upon achieving the goal and the expected penalty minimization upon falling to a shortfall level.
A parallel strand of literature focuses on tracking error minimization, often formulated as linear-quadratic stochastic control problems (\citealt{Gaivoronski05, YaoZZ06, NLFC}).

In sharp contrast, \cite{BoLiaoYu21} propose a tracking framework based on fictitious capital injection, in which it is presumed that a fund manager may tactically infuse fictitious capital as singular control into the fund account such that the total capital surpasses the designated benchmark process. More recently, \cite{BHY23a,BHY23b} generalize this tracking formulation to accommodate the consumption plan via utility maximization. In the present paper, we further combine this optimal tracking control problem with both dynamic state constraints and  the additional drawdown constraint on the consumption control (see \eqref{eq:w}). Our imposition of a drawdown constraint on consumption is grounded in habit formation model (\citealt{Constantinides1990, Detemple91, Detemple92, Campbell1999}): it prevents investors, once accustomed to a certain standard of living, from facing severe deteriorations in their consumption level. Unlike the ratcheting constraint as in \cite{Dybvig95}, our formulation with drawdown constraint allows for managed habit adjustment, striking a balance between strict habit persistence and unconstrained models. In particular, we accommodate scenarios where agents accept moderate consumption cuts during adverse market conditions—a feature that aligns more closely with empirical behavior. Additionally, we investigate how this drawdown constraint influences both consumption behavior and capital injection strategies. A key distinction from classical models (e.g., \citealt{Arun12}, \citealt{JeonPark21} and \citealt{JO22}) is that previous studies often require a threshold of the initial wealth to support the drawdown constraint. In contrast, our formulation ensures the feasibility of the drawdown and benchmark constraints through the strategic capital injection, eliminating the need of the restrictive initial wealth condition.

 Our formulation yields a class of non-standard optimal control problems involving both regular and singular controls, subject to state-control constraints. Note that one cannot apply the conventional dimension-reduction technique or variable transformations here, as the objective functional in problem \eqref{eq:w} lacks the desired homogeneity. To maintain tractability within a Markovian framework, the value function must incorporate three state variables, further complicating the analysis. To address the dynamic state-control constraint in problem \eqref{eq:w}, we first adapt the methodology from \cite{BHY23a,BHY23b} to reformulate it as an equivalent problem \eqref{eq:u0} involving only regular controls. This transformation allows us to incorporate the wealth constraints (induced by benchmark tracking) through a reflected state process at the boundary zero. The key distinction from \cite{BHY23a,BHY23b} lies in the additional challenge posed by the drawdown constraint on consumption in this auxiliary control problem. Specifically, we must handle not only the reflected state process but also an extra state variable: the running maximum of consumption, which complicates the analysis.

Mathematically speaking, we study an associated Hamilton-Jacobi-Bellman variational inequality (HJB-VI) featuring a Neumann boundary condition (arising from the reflected state process) and
a free boundary condition (induced by the consumption drawdown constraint). The primary contributions of this work are (i) a rigorous analysis of this novel three-dimensional HJB equation with mixed boundary conditions; (ii) a technical verification for optimality, addressing two key challenges:  
the objective functional in \eqref{eq:u0} incorporates the local time of the reflected state process (a non-standard feature) and the drawdown constraint introduces path-dependent dynamics. To tackle the HJB-VI, we decompose the three-dimensional domain into five regions and reformulate the problem as a piecewise linear dual PDE with Neumann/free boundary conditions and a super-contact condition. By conjecturing a separation-form solution for the dual PDE. Leveraging the smooth-fit principle and super-contact condition, we derive (i) a closed-form solution for the dual PDE; (ii) an explicit characterization of the free boundary as the unique solution to an algebraic equation (Proposition \ref{prop:sol-v}); (iii) the primal solution via an inverse transform, expressing the value function and optimal feedback policies (investment/consumption) in terms of the original state variables.

We highlight some key distinctions between our verification arguments for Theorem \ref{thm:verification} in this paper and that in the existing literature. Contrary to \cite{BHY23a,BHY23b}, the drawdown constraint of consumption complicates the feedback functions of optimal portfolio and consumption and introduces a free boundary implicitly depending on the historical maximum consumption level. The technique in \cite{BHY23a,BHY23b}—relying on the dual representations and estimates of feedback controls—can not be directly applied, particularly for establishing the transversality condition. To resolve these issues, the current paper provides some analytical properties of the free boundary (Proposition \ref{prop:sol-v} and Lemma \ref{lem:boundary-m}) and derives the linear growth of feedback controls across regions (Lemma \ref{lem:feedback-control}), ensuring the feasibility of optimal strategies and the well-posedness of the reflected SDE of $X^*=(X_t^*)_{t\geq0}$  under the optimal (feedback) policy. On the other hand, prior studies on drawdown constraints and consumption running maxima—such as those by \cite{Bahman2019}, \cite{DLPY22}, and \cite{LYZ24}--rely on the standard Black-Scholes model, where the dual process (i.e., the state price density) can be used to simplify the verification of the transversality condition. However, we focus on an auxiliary control problem governed by a non-standard reflected SDE (see \eqref{state-X}). This setting presents a key challenge: the absence of a well-established duality theory to overcome some obstacles in proving the verification theorem. In response, we introduce an auxiliary dual reflected diffusion process (defined in \eqref{eq:Yt}), constructed directly from the primal reflected optimal control model. Leveraging this dual process, we derive a duality representation of the primal value function (see \eqref{eq:v-Yt}) and the duality inequality (see \eqref{eq:hatv-y}). Second, leveraging the duality inequality, we develop a new verification argument that transfer the transversality condition from the primal state processes to this reflected dual process. Lastly, building on the newly established properties and moment estimates of the reflected dual process (Lemmas \ref{lem:Yt} and \ref{lem:transversality}), we conclude the desired transversality conditions under a mild condition on the discount rate. As shown in Theorem \ref{thm:verification}, we stress that the optimal portfolio and consumption control processes exhibit path dependence on both the wealth process $V^{\theta,c}=(V^{\theta,c}_t)_{t\geq0}$ and the benchmark process $Z=(Z_t)_{t\geq0}$, rendering the direct approach based on the original state process $V^{\theta,c}=(V^{\theta,c}_t)_{t\geq0}$ intractable. The introduction of the auxiliary reflected state process $X=(X_t)_{t\geq0}$ is necessary, which enables a feedback form of the optimal strategies (see Remark \ref{rem:XV}). The overall methodology—combining the auxiliary problem with the dual transformation—is outlined in the flow chart below.

Furthermore, leveraging the reflected dual process, we prove in Lemma \ref{lem:inject-captial} that the expected discounted total capital injection remains finite and strictly positive. This ensures the necessity of capital injection to satisfy the dynamic benchmark tracking constraint, while also guaranteeing that the problem remains well-posed—ruling out unrealistic scenarios requiring infinite capital to meet both the benchmark tracking and drawdown consumption constraints. To complement the theoretical analysis, we present some numerical examples on the sensitivity of the optimal feedback controls and the expected discounted capital injection with respect to model parameters. These plots and simulations help to draw some financial insights from our theoretical formulas. For instance, when capital injection is triggered to satisfy the benchmark tracking constraint, the consumption is often observed to stay at its lower bound, $\lambda \sup_{s\leq t}c_s$, implying that the peak consumption $\sup_{s\leq t}c_s$ remains unchanged. Intuitively, when the wealth is sufficiently high, the optimal consumption under a drawdown constraint ($\lambda > 0$) is surprisingly lower than in the unconstrained case ($\lambda = 0$). This suggests that the drawdown constraint discourages aggressive spending in high-wealth scenarios, as elevated consumption would raise the reference process $\sup_{s\leq t}c_s$, tightening future consumption limits. Conversely, the optimal portfolio allocation under the drawdown constraint is larger, requiring the more aggressive investment to sustain the constraint. In addition, a stricter drawdown also necessitates higher capital injections to support the elevated consumption threshold. 

The remainder of this paper is organized as follows. Section \ref{sec:model} introduces the optimal consumption problem with relaxed benchmark tracking under the consumption drawdown constraint. We reformulate the original problem into an equivalent auxiliary control problem using a reflected state process, then derive the associated HJB-VI with a Neumann boundary condition. In Section \ref{sec:solvHJBVI}, we decompose the HJB-VI into piecewise segments based on optimal consumption behavior. Employing the smooth-fit principle and super-contact conditions, we obtain a closed-form solution to the dual PDE under Neumann and free boundary conditions. Section \ref{sec:verification} presents the verification theorem and characterizes the optimal feedback investment and consumption strategies in piecewise form. Numerical examples and financial implications are discussed in Section \ref{sec:numerical}.  Section \ref{sec:conslusion} summarizes the main results and discusses some future research directions. Finally, all technical proofs are consolidated in Section \ref{sec:proofs}.

{\footnotesize
\begin{center}
\begin{tikzpicture}[
    node distance=2.7cm,
    box/.style={draw, rectangle, minimum width=6cm, minimum height=2cm, align=center}
]
    \node[box] (def) {{\bf Optimal tracking problem \eqref{eq:w}}};
    \node[box, right=of def] (xt) {$D_t:=Z_t-V_t^{\theta,c}$\\ $L_t=0\vee \sup_{s\leq t}D_s$\\[0.4em] {\bf Auxiliary process}:~$X_t:=L_t-D_t$ \\{\bf Auxiliary problem \eqref{eq:u0}}};
    \node[box, below=of xt] (ft) {{\bf Dual function }~$\hat{v}(y,z,m)$\\[0.4em] Solving linear dual PDE \eqref{HJB.3}\\ by {\bf Proposition} \ref{prop:sol-v}};
    \node[box, left=of ft] (int) {{\bf Value function} $v(x,z,m)$\\  {\bf Optimal feedback strategy}\\  (see {\bf Theorem}  \ref{thm:verification})};

    \draw[-Latex] (def) -- (xt);
    \draw[-Latex] (xt) -- (ft)
     node[midway,right]{Legendre-Fenchel transform};
    \draw[-Latex] (ft) -- (int)
    node[midway,above] {inverse transform};
    \draw[-Latex] (int) -- (def);
\end{tikzpicture}
\end{center}}

\section{Problem Formulation and Equivalent Auxiliary Problem}\label{sec:model}
\subsection{Market model and problem formulation}

Let  $(\Omega, \mathcal{F}, \Fx,\mathbb{P})$ be a filtered probability space with the filtration $\mathbb{F}=(\mathcal{F}_t)_{t\geq 0}$ satisfying the usual conditions. Consider a financial market consisting of $d$ risky assets whose price dynamics follows the Black-Scholes model that
\begin{align}\label{stockSDE}
dS_t = {\rm diag}(S_t)(\mu dt+\sigma dW_t),\quad S_0\in(0,\infty)^d,\ \ t> 0,
\end{align}
where $S=(S_t^1,\ldots,S_t^d)_{t\geq0}^{\top}$ is the price process vector of $d$ risky assets,  and $W=(W_t^1,\ldots,W_t^d)_{t\geq 0}^{\top}$ is a $d$-dimensional $\Fx$-adapted Brownian motion. Moreover, $\mu=(\mu_1,\ldots,\mu_d)^{\top}\in\R^d$ denotes the vector of return rate and $\sigma=(\sigma_{ij})_{d\times d}$ is the volatility matrix that is assumed to be invertible. It is also assumed that the riskless interest rate $r=0$, which amounts to the change of num\'{e}raire. From this point onwards, all processes including the wealth process and the benchmark process are defined after the change of num\'{e}raire.

At time $t\geq 0$, let $\theta_t^i$ be the amount of wealth that the fund manager allocates in asset $S^i=(S_t^i)_{t\geq 0}$, and let $c_t$ be the non-negative consumption rate. The self-financing wealth process under the portfolio $\theta=(\theta_t^1,\ldots,\theta_t^d)_{t\geq 0}^{\top}$ and the consumption strategy $c=(c_t)_{t\geq 0}$ is given by
\begin{align}\label{eq:wealth2}
dV^{\theta,c}_t &=\theta_t^{\top}\mu dt+ \theta_t^{\top}\sigma dW_t-c_tdt,\quad t>0
\end{align}
with $V_0^{\theta,c}=\textrm{v}\geq 0$ being the initial wealth of the fund manager.

In the present paper, we consider the situation when the fund manager also concerns the relative performance with respect to an external benchmark process, which is described as the following geometric Brownian motion (GBM) that
\begin{align}\label{eq:Zt}
d Z_t=\mu_Z Z_t d t+\sigma_Z Z_t d W_t^{\gamma}, \quad Z_0=z\geq0,
\end{align}
where the return rate $\mu_Z\in\R$, the volatility $\sigma_Z\geq0$, and the Brownian motion $W_t^{\gamma}:=\gamma^{\top}W_t$ for $t\geq0$ and $\gamma=(\gamma_1,\ldots,\gamma_d)^{\top}\in\R^d$ satisfying $|\gamma|=1$, i.e., the Brownian motion $W^{\gamma}=(W_t^{\gamma})_{t\geq0}$ is a linear combination of $W$ with weights $\gamma$. Unless specified otherwise, $|\cdot|$ refers to the Euclidean norm of vectors.

Given the benchmark process $Z=(Z_t)_{t\geq0}$, we consider the relaxed benchmark tracking formulation in \cite{BHY23a,BHY23b} in the sense that the fund manager can strategically chooses the dynamic portfolio and consumption as well as the fictitious capital injection such that the total capital outperforms the benchmark process at all times. That is, the fund manager optimally chooses the regular control $\theta=(\theta_t)_{t\geq0}$ as the dynamic portfolio in risky assets, the regular control $c$ as the consumption rate and the singular control $A=(A_t)_{t\geq0}$ as the cumulative capital injection such that $A_t+V_t^{\theta,c}\geq Z_t$ for all $t\geq0$. Furthermore, consider a drawdown constraint on the consumption rate in the sense that $c_t$ cannot fall below a fraction $\lambda  \in [0, 1]$ of its past maximum that 
\begin{align}
c_t\geq \lambda M_t,\quad \forall t\geq 0,
\end{align}
where, the non-decreasing reference process $M=(M_t)_{t\geq 0}$ is defined as the historical spending maximum that
\begin{align}\label{eq:Mt}
M_t:=\max\left\{m,\sup_{s\in[0,t]} c_s\right\},\quad \forall t\geq0
\end{align}
with $m\geq 0$ being the initial reference level. The goal of the agent is to maximize the
expected utility on consumption deducted by the cost of capital injection in the sense that, for all $(\mathrm{v},z,m)\in\R_+^3$ with $\R_+:=[0,\infty)$,
\begin{align}\label{eq:w}
\begin{cases}
\displaystyle {\rm w}(\mathrm{v},z,m):=\sup_{(\theta,c,A)\in\mathbb{U}} \Ex\left[\int_0^{\infty} e^{-\rho t} U(c_t)dt- \beta\int_0^{\infty} e^{-\rho t}dA_t\right],\\[1.6em]
\displaystyle \text{subject to}~ A_t + V^{\theta,c}_t\ge Z_t~\text{for all}~ t\geq 0.
\end{cases}
\end{align}
 Here, we define the admissible control set $\mathbb{U}$  as the set of adapted processes $(\theta,c,A)=(\theta_t,c_t,A_t)_{t\geq0}$ such that
\begin{itemize}
    \item  $(\theta,c)$ are $\Fx$-adapted processes taking values on $\R^d\times\R_+$ satisfying the control drawdown constraint $c_t\ge \lambda M_t$,  and  the integrability condition~$\Ex[\int_0^t (c_s+|\theta_s|^2)ds]<\infty$  for $t\geq0$, 
    \item $A$ is a non-negative, non-decreasing and $\Fx$-adapted (r.c.l.l.) process with initial value $A_0=a\in\R_+$.
\end{itemize}
The constant $\rho>0$ is the subjective discount rate, and  the parameter $\beta>0$ denotes the utility per injected capital, which describes the relative importance between the consumption performance and the cost of capital injection. We consider the CRRA utility in the paper that
\begin{align}\label{eq:Ui}
U(x)=\frac{1}{p}x^{p},
\end{align}
where $1-p\in(0,1)\cup (1,+\infty)$ is the risk averse parameter.

To tackle problem \eqref{eq:w} with the floor constraint, we reformulate the problem based on the observation that, for a fixed control pair $(\theta,c)$, the optimal $A$ is always the smallest adapted right-continuous and non-decreasing process that dominates $Z-V^{\theta,c}$. It follows from Lemma 2.4 in \cite{BoLiaoYu21} that,  for fixed regular control pair $(\theta,c)$, the corresponding optimal singular control $A^{(\theta,c),*}=(A^{(\theta,c),*}_t)_{t\geq0}$ satisfies that $A^{(\theta,c),*}_t =0\vee \sup_{s\leq t}(Z_{s}-V_{s}^{\theta,c}),~\forall t\geq 0$. Thus, problem \eqref{eq:w} admits the equivalent formulation with a running maximum cost that
\begin{align}\label{eq_orig_pb}
{\rm w}(\mathrm{v}, z,m) &=-\beta(z-\mathrm{v})^+\nonumber\\
&\quad+\sup_{(\theta,c)\in\mathbb{U}^{\rm r}}\ \Ex\left[ \int_0^{\infty} e^{-\rho t} U(c_t)dt-\beta\int_0^{\infty} e^{-\rho t} d\left(0\vee \sup_{s\leq t}(Z_{s}-V_{s}^{\theta,c})\right)\right],
\end{align}
where $\mathbb{U}^{\rm r}$ is the admissible control set of pairs $(\theta,c)=(\theta_t,c_t)_{t\geq0}$ that will be specified later.

\begin{remark}
Note that, the floor state constraint $A_t+V_t^{\theta,c}\geq Z_t$ disappears in the formulation above, while we still need to cope with the control drawdown constraint $c_t\geq \lambda M_t$ for problem \eqref{eq_orig_pb}, which leads to a free boundary condition of the associated HJB equation that differs fundamentally from \cite{BHY23a,BHY23b}. In addition, we would like to stress that $A_t^{(\theta,c),*}=\sup_{s\leq t}(V_s^{\theta,c}-Z_s)^-$ under the control pair $(\theta,c)$ in fact records the largest shortfall when the wealth process $V_s^{\theta,c}$ falls below the benchmark process $Z_s$ up to time $t$. Consequently, when the strategic capital injection is not possible in the fund management, we can also directly consider the problem formulation \eqref{eq_orig_pb} to allow the wealth process $V_t^{\theta,c}$ to fall below $Z_t$ from time to time. However, we need to control the size of the expectation $\mathbb{E}[\int_0^{\infty}e^{-\rho t}d(0\vee \sup_{s\leq t}(Z_s-V_s^{\theta, c}))]$, which can be interpreted as the expected largest shortfall of the wealth with respect to the benchmark in a long run.

For the optimal consumption problem under a drawdown constraint, a lower bound on initial wealth is typically required to ensure the well-posedness (see, e.g., \citealt{Arun12, JeonPark21, JO22}). By considering an exit time, \cite{Bahman2019} avoids this assumption by terminating the problem when the wealth process reaches zero. In contrast, under the relaxed benchmark tracking formulation \eqref{eq:w}, we allow the wealth process $V^{\theta,c}=(V_t^{\theta,c})_{t\geq0}$ to take even negative values. Here, the feasibility of both the drawdown constraint and the benchmark constraint is guaranteed by the possible capital injection (or equivalently, by relaxing the bankruptcy constraint but controlling the expected largest shortfall with respect to the benchmark).
\end{remark}

\subsection{Equivalent auxiliary control problem}\label{sec:aux}

This subsection introduces a more tractable auxiliary stochastic control problem, which is mathematically equivalent to problem \eqref{eq_orig_pb}. To this end, we define a new controlled state process to replace the original state process $V^{\theta,c}=(V_t^{\theta,c})_{t\geq 0}$ in \eqref{eq:wealth2}. Then, consider the distance process $D_t:=Z_t-V_t^{\theta,c},~\forall t\geq 0$
with $D_0=z-\mathrm{v}$, and its running maximum process given by $L_t:=0\vee \sup_{s\leq t}D_s\geq 0$ for $t\geq0$, and $L_0=0$. Thus, the new controlled state process $X=(X_t)_{t\geq 0}$ taking values on $\R_+$ is defined as the reflected process $X_t:=L_t-D_t$ for $t\geq 0$ that satisfies the following SDE with reflection:
\begin{align}\label{state-X}
X_t =x+\int_0^t\theta_s^{\top}\mu ds+\int_0^t\theta_s^{\top}\sigma dW_s  -\int_0^t c_s ds-\int_0^t \mu_Z Z_sds-\int_0^t \sigma_Z Z_sdW_s^{\gamma}+ L_t
\end{align}
with the initial value $X_0=x:=(\mathrm{v}-z)^+\in\R_+$. For the notational convenience, we have omitted the dependence of $X=(X_t)_{t\geq0}$ on the control $(\theta,c)$. In particular, the process $L=(L_t)_{t\geq0}$ which is referred to as the local time of $X$, it increases at time $t$ if and only if $X_t=0$, i.e., $L_t=D_t$. We will change the notation from $L_t$ to $L_t^X$ from this point on wards to emphasize its dependence on the new state process $X$ given in \eqref{state-X}.

With the above preparations, consider the auxiliary stochastic control problem that, for $(x,z,m)\in\R_+^3$,
\begin{align}\label{eq:u0}
\begin{cases}
\displaystyle v(x,z,m):=\sup_{(\theta,c)\in\mathbb{U}^{\rm r}}J(x,z,m;\theta,c)\\
\displaystyle\qquad\qquad:=\sup_{(\theta,c)\in\mathbb{U}^{\rm r}}\Ex\left[\int_0^\infty e^{-\rho t} U(c_t)dt- \beta \int_0^{\infty} e^{-\rho t}dL_t^X\Big|X_0=x,Z_0=z,M_0=m \right],\\[1em]
\displaystyle \text{s.t.~the state process}~(X,Z,M)~\text{satisfies the dynamics~\eqref{state-X},~\eqref{eq:Zt}~and~\eqref{eq:Mt}},
\end{cases}
\end{align}
 where the admissible control set $\mathbb{U}^{\rm r}$ is specified as the set of $\Fx$-adapted processes $(\theta,c)=(\theta_t,c_t)_{t\geq0}$ taking values on $\R^d\times\R_+$ such that the drawdown constraint $c_t\geq \lambda M_t$ and the integrability condition~$\Ex[\int_0^t (c_s+|\theta_s|^2)ds]<\infty$ for $t\geq0$. It is not difficult to observe the following equivalence result.
\begin{lemma}\label{lem:equivalence}
For value functions $\mathrm{w}({\rm v},z,m)$ defined in \eqref{eq_orig_pb} and  $v(x,z,m)$ defined in \eqref{eq:u0}, we have
${\rm w}(\mathrm{v},z,m)=v((\mathrm{v}-z)^+,z,m)-\beta ( z-\mathrm{v})^+$ for all $(\mathrm{v},z,m)\in\R_+^3$.
\end{lemma}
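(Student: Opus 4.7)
My plan is a pathwise identification of the two objective functionals. Both problems \eqref{eq_orig_pb} and \eqref{eq:u0} are maximized over the \emph{same} admissible regular-control set $\mathbb{U}^{\rm r}$, so it suffices to show that for every fixed $(\theta,c)\in\mathbb{U}^{\rm r}$ the integrands in \eqref{eq_orig_pb} and in $J(x,z,m;\theta,c)$ differ by exactly the deterministic constant $-\beta(z-\mathrm{v})^+$ almost surely; the claimed identity for the value functions then follows by taking the supremum on each side. Notice that this approach avoids any probabilistic estimate: the entire content of the lemma is a pathwise bookkeeping identity, and the admissibility conditions are shared by the two problems.

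The key step is to match the two state representations. Define $D_t := Z_t - V_t^{\theta,c}$, so that the singular-cost term in \eqref{eq_orig_pb} is driven by $\tilde{A}_t := 0 \vee \sup_{s\leq t} D_s$. Instantiating the reflected SDE \eqref{state-X} with $X_0 = x := (\mathrm{v}-z)^+$ and inserting \eqref{eq:wealth2} and \eqref{eq:Zt}, a one-line telescoping yields
\begin{equation*}
X_t \;=\; (z-\mathrm{v})^+ - D_t + L_t^X,\qquad t\geq 0.
\end{equation*}
Since $X_t\geq 0$ and $L^X$ is characterized as the minimal continuous non-decreasing process with $L^X_0=0$ that makes this identity consistent and grows only on $\{X=0\}$, the Skorokhod reflection lemma gives the explicit formula $L_t^X = 0 \vee \sup_{s\leq t}(D_s-(z-\mathrm{v})^+)$. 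Comparing with $\tilde{A}_t$ and observing that $D_0=z-\mathrm{v}$ is attained inside the supremum when $\mathrm{v}<z$, I obtain
\begin{equation*}
\tilde{A}_t \;=\; (z-\mathrm{v})^+ + L_t^X,\qquad \text{for all } t\geq 0,
\end{equation*}
so $\tilde{A}$ carries a deterministic initial lump of size $(z-\mathrm{v})^+$ at $t=0$ and thereafter increases exactly when $L^X$ does.

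With the convention employed in \eqref{eq_orig_pb}, i.e.\ that $\int_0^\infty e^{-\rho t}\,d\tilde{A}_t$ charges only the increments of $\tilde{A}$ strictly after $t=0$ (the initial lump having been booked separately as the leading $-\beta(z-\mathrm{v})^+$ term, in the same spirit as Lemma~2.4 of \cite{BoLiaoYu21}), the identity above gives $\int_0^\infty e^{-\rho t}\,d\tilde{A}_t = \int_0^\infty e^{-\rho t}\,dL_t^X$ almost surely. Substituting into \eqref{eq_orig_pb}, the objective functional at $(\theta,c)$ equals $-\beta(z-\mathrm{v})^+ + J(x,z,m;\theta,c)$, and taking the supremum over $\mathbb{U}^{\rm r}$ yields $\mathrm{w}(\mathrm{v},z,m)=v((\mathrm{v}-z)^+,z,m)-\beta(z-\mathrm{v})^+$, as claimed.

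The most delicate point I expect is the boundary case $\mathrm{v}<z$, in which $x=0$ so that $X$ starts on the reflecting barrier and $\tilde{A}$ has an atom at $t=0$. The care there will go into verifying that the Skorokhod decomposition assigns that atom entirely to the deterministic constant $(z-\mathrm{v})^+$ in the formula for $L^X$, so that $L^X_0=0$ and the local time contributes no mass at the origin; together with the integration convention discussed above, this rules out double counting of the initial injection. Once this pathwise bookkeeping is settled, the remainder of the argument is purely algebraic and needs no moment estimates or measurability subtleties beyond those already contained in the definition of $\mathbb{U}^{\rm r}$.
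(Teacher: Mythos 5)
Your proposal is correct and is essentially the argument the paper has in mind: the paper states this lemma without proof (``It is not difficult to observe\dots''), the intended justification being precisely your pathwise bookkeeping that the Skorokhod local time of $X$ started from $x=(\mathrm{v}-z)^+$ satisfies $L^X_t=0\vee\sup_{s\leq t}\bigl(D_s-(z-\mathrm{v})^+\bigr)$, so that $0\vee\sup_{s\leq t}D_s=(z-\mathrm{v})^+ + L^X_t$ and the two objectives differ by the constant $-\beta(z-\mathrm{v})^+$ for every $(\theta,c)\in\mathbb{U}^{\rm r}$. Your explicit handling of the $\mathrm{v}<z$ case (the initial lump being booked in the leading $-\beta(z-\mathrm{v})^+$ term rather than in the integral) is consistent with the convention of \eqref{eq_orig_pb} and with Lemma 2.4 of \cite{BoLiaoYu21}, so no gap remains.
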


It is straightforward to derive the following property of the value function $v$ in \eqref{eq:u0}.
\begin{lemma}\label{lem:propoertyu}
 The value function $x\to v(x,z,m)$ given by \eqref{eq:u0} is non-decreasing. Furthermore, for all $(x_1,x_2,z,m)\in\R_+^4$, we have
\begin{align}\label{eq:lipvxz}
\left|v(x_1,z,m)-v(x_2,z,m)\right|\leq \beta |x_1-x_2|.
\end{align}
\end{lemma}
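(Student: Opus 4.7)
The plan is to couple two copies of the reflected state process $X^1$ and $X^2$ starting at initial values $x_1 \geq x_2$ respectively, driven by the same admissible control $(\theta,c) \in \mathbb{U}^{\rm r}$ and the same Brownian motion $W$. A preliminary but crucial observation is that the admissibility set $\mathbb{U}^{\rm r}$ is itself independent of $x$: the drawdown constraint $c_t \geq \lambda M_t$ involves only $c$ and its own running maximum $M$, and the integrability requirement is imposed solely on the control. Consequently any control admissible from $x_2$ is automatically admissible from $x_1$, which makes it legitimate to compare $J(x_1,z,m;\theta,c)$ and $J(x_2,z,m;\theta,c)$ under the same $(\theta,c)$.

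Writing $X_t^i = Y_t^i + L_t^{X^i}$ where $Y^i$ collects the drift and diffusion terms from \eqref{state-X} starting at $x_i$, the Skorokhod reflection at zero provides the explicit representation $L_t^{X^i} = 0 \vee \sup_{s\leq t}(-Y_s^i)$. Because the two driving processes differ only by a constant shift, $Y_t^1 - Y_t^2 \equiv x_1 - x_2$, and a direct case analysis on whether $\sup_{s\leq t}(-Y_s^2)$ exceeds $x_1 - x_2$ yields the pathwise bound
\[
0 \leq L_t^{X^2} - L_t^{X^1} \leq x_1 - x_2, \qquad \forall\, t \geq 0,\ \text{a.s.}
\]
The lower bound $L^{X^1}_t \leq L^{X^2}_t$ already forces $J(x_1,z,m;\theta,c) \geq J(x_2,z,m;\theta,c)$, since the consumption utility term in \eqref{eq:u0} is identical under the same $(\theta,c)$ and the local-time term enters with a minus sign; taking the supremum over $\mathbb{U}^{\rm r}$ gives monotonicity of $x \mapsto v(x,z,m)$.

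To obtain the Lipschitz estimate \eqref{eq:lipvxz}, I would apply integration by parts for the Stieltjes integral together with the identity $e^{-\rho T} + \rho\int_0^T e^{-\rho t}\,dt = 1$ to deduce
\[
\int_0^T e^{-\rho t}\,d\bigl(L_t^{X^2} - L_t^{X^1}\bigr) = e^{-\rho T}\bigl(L_T^{X^2} - L_T^{X^1}\bigr) + \rho \int_0^T e^{-\rho t}\bigl(L_t^{X^2} - L_t^{X^1}\bigr)\,dt \leq x_1 - x_2,
\]
and then pass $T \to \infty$ by monotone convergence. Taking expectations, combining with the cancellation of the utility term and then the supremum over $\mathbb{U}^{\rm r}$ delivers $0 \leq v(x_1,z,m) - v(x_2,z,m) \leq \beta(x_1-x_2)$, i.e.\ \eqref{eq:lipvxz}.

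There is no genuinely hard step: everything rests on the explicit Skorokhod formula for $L^X$ and a deterministic pathwise comparison. The only minor care points are justifying the infinite-horizon passage, which the exponential discount and monotonicity handle cleanly, and accommodating the possibility that the consumption-utility integral is infinite—since it is the same for both initial values it cancels in the difference, so no integrability assumption beyond admissibility is needed.
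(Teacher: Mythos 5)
Your argument is correct: the admissible set is independent of the initial state $x$, the Skorokhod formula $L_t^{X^i}=0\vee\sup_{s\leq t}(-Y_s^i)$ with $Y_t^1-Y_t^2\equiv x_1-x_2$ gives the pathwise bound $0\leq L_t^{X^2}-L_t^{X^1}\leq x_1-x_2$, and the integration-by-parts step then yields $0\leq v(x_1,z,m)-v(x_2,z,m)\leq\beta(x_1-x_2)$ exactly as claimed. The paper states this lemma without proof (calling it straightforward), and your pathwise local-time comparison is precisely the natural argument it has in mind, so there is nothing substantive to add.
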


Applying the dynamic programming arguments, the associated HJB variational inequality (HJB-VI) with the Neumann boundary condition can be written as, for $(x,z,m)\in\R_+^3$,
\begin{align}\label{eq:HJB-v}
\begin{cases}
\displaystyle\max\bigg\{\sup_{\theta\in\R^d}\left[\theta^{\top}\mu v_x+\frac{1}{2}\theta^{\top}\sigma\sigma^{\top}\theta v_{xx}+ \theta^{\top} \sigma \gamma\sigma_Z z (v_{x z}-v_{xx})  \right]+\sup_{c\in[\lambda m,m]}\left(U(c)-cv_x\right)\\[1.2em]
\displaystyle\qquad\qquad-\sigma_Z^2 z^2 v_{xz}+\frac{1}{2}\sigma_Z^2 z^2 (v_{xx}+v_{zz})+\mu_Z z (v_z-v_x)-\rho v,v_m\bigg\}=0,\\[1em]
\displaystyle v_x(0,z,m)=\beta,\quad \forall (z,m)\in\R^2_+,
\end{cases}
\end{align}
 where, the Neumann boundary condition $ v_x(0,z,m)=\beta$ stems from the martingale optimality condition because the local time process $L^X_t$ increases whenever the process $X_t$ hits $0$.

If one assumes heuristically that $v_{xx}<0$ and  $v_x>0$, which will be verified later, the feedback optimal control can be uniquely determined by, for $(x,z,m)\in\R_+^3$,
\begin{align}\label{feedback.opti}
\begin{cases}
\displaystyle \theta^{*}(x,z,m)=-(\sigma\sigma^{\top})^{-1}\frac{v_x(x,z,m)\mu+(v_{xz}-v_{xx})(x,z,m)z\sigma_Z\sigma\gamma}{v_{xx}(x,z,m)},\\[1em]
\displaystyle c^*(x,z,m)=\max\{\lambda m,\min\{m,v^{\frac{1}{p-1}}_x(x,z,m)\}\}.
\end{cases}
\end{align}
Plugging \eqref{feedback.opti} into \eqref{eq:HJB-v}, we get
\begin{align}\label{HJB.2}
\begin{cases}
\displaystyle\max\Bigg\{ -\alpha\frac{v^2_x}{v_{xx}}+\frac{1}{2}\sigma^2_Zz^2\left(v_{zz}-\frac{v^2_{xz}}{v_{xx}}\right)-\eta z\frac{v_xv_{xz}}{v_{xx}}+(\eta-\mu_Z)zv_x+\mu_Zzv_z \\
\displaystyle\qquad\quad+\frac{1}{p}(c^*)^{p}-c^*v_x-\rho v,v_m\Bigg\}=0,\\
\displaystyle v_x(0,z,m)=\beta
\end{cases}
\end{align}
with the coefficients $\alpha:=\frac{1}{2}\mu^{\top}(\sigma\sigma^{\top})^{-1}\mu$ and $\eta:=\sigma_Z\gamma^{\top}\sigma^{-1}\mu$. The next section will explore the solvability of HJB-VI \eqref{HJB.2} for the auxiliary control problem \eqref{eq:u0}. 

\section{Solvability of HJB-VI}\label{sec:solvHJBVI}

\subsection{Piecewise HJB-VI across different regions}\label{sec:3-1}

We first heuristically decompose the domain $\R_+^3$ into the following five regions such that Eq. \eqref{HJB.2} can be expressed piecewisely depending on the optimal consumption control, where we set $y_1(m)> y_2(m)\geq y^*(m)$ for $m\geq0$ such that 
$y_1(m):=(\lambda m)^{p-1}$, $y_2(m):=m^{p-1}$ and $y^*(m)$ is the free boundary that will be determined later.  As a direct result of Lemma~\ref{lem:propoertyu}, if the value function $v$ is $C^1$ in $x$, then $|v_x(x,z,m)|= v_x(x,z,m)\leq \beta$ for all $(x,z,m)\in\R_+^3$, as it is assumed that $v_x>0$.  This implies that  $v^{\frac{1}{p-1}}_x(x,z,m)\geq \beta^{\frac{1}{p-1}}$ for all $(x,z,m)\in\R_+^3$, which provides a subsistent consumption constraint  arising from arise from the tracking formulation
by allowing capital injection. Thus, let us introduce the domain ${\cal O}:=\{(x,z,m)\in\R^3;~m\geq \beta^{\frac{1}{p-1}}\}$.

{\it Region I:} On the set $\mathcal{R}_1=\{(x,z,m)\in{\cal O};~ y_1(m)< v_x(x,z,m)\leq\beta\}$, the optimal consumption $c^*(x,z,m)=\lambda m$. In this case, the wealth level is very low such that it is optimal for the fund manager to consume at the drawdown constraint level. 


{\it Region II:} On the set $\mathcal{R}_2=\{(x,z,m)\in{\cal O};~y_2(m)< v_x(x,z,m)\leq y_1(m)\}$, the optimal consumption $c^*(x,z,m)=v_x(x,z,m)^{\frac{1}{p-1}}$. In this case, the wealth level is at an intermediate level such that the consumption rate is greater than the lowest rate and lower than its historical peak. 


{\it Region III:} On the set $\mathcal{R}_3=\{(x,z,m)\in{\cal O};~y^*(m)< v_x(x,z,m)\leq y_2(m)\}$, the optimal consumption $c^*(x,z,m)=m$. In this case, the wealth level is large enough that the optimal consumption rate is either to revisit its historical peak from below or to sit on the same peak. 


{\it Region IV:} On the set $\mathcal{R}_4=\{(x,z,m)\in{\cal O};~v_x(x,z,m)= y^*(m)\}$, the optimal consumption $c^*(x,z,m)=m$. In this case, the wealth level is large enough such that the optimal consumption rate $c^*(x,z,m)$ which is a singular control creates a new record of the peak and $M_t=c_t^*$ is strictly increasing at the instant time. Thus, we have to mandate the free boundary condition $v_m(x,z,m)=0$ and a so-called ``super-contact condition" $v_{mx}(x,z,m)=v_{mz}(x,z,m)=0$. 

{\it Region V:} On the set $\mathcal{R}_5=\{(x,z,m)\in\R^3_+;~(x,z,m)\in{\cal O}^c~\text{or}~v_x(x,z,m)< y^*(m)\}$, the optimal consumption strategy $c^*(x,z,m)>m$, which indicates that the initial level $m$ is below the feedback control $c^*(x,z,m)$ and the historical peak $M=(M_t)_{t\geq0}$ has a jump immediately to attain $c^*(x,z,m)$. As a result, for any initial value $(x,z,m)$ in the set $\mathcal{R}_5$, the feedback control $c^*(x,z,m)$ will push the current states jumping immediately to the point $(x,z,m^*)$ on the region $\mathcal{R}_4$ with $m^*=c^*(x,z,m)$.

In sum, it is sufficient to only consider $(x,z,m)$ on the effective domain ${\cal D}:=\bigcup_{i=1}^4{\cal R}_i$. As a result, for $(x,z,m)\in\mathcal{D}$, the HJB-VI \eqref{HJB.2} can be rewritten in the piecewise form that  
\begin{align}\label{eq:HJB-VI99}
\begin{cases}
 \displaystyle -\alpha\frac{v^2_x}{v_{xx}}+\frac{1}{2}\sigma^2_Zz^2\left(v_{zz}-\frac{v^2_{xz}}{v_{xx}}\right)-\eta z\frac{v_xv_{xz}}{v_{xx}}+(\eta-\mu_Z)zv_x+\mu_Zzv_z+\Phi(v_x)-\rho v=0,\\[1.2em]
 \displaystyle v_m(x,z,m)\leq0,~~\displaystyle v_x(0,z,m)=\beta,\\[0.8em]
\displaystyle v_m(x,z,m)=v_{mz}(x,z,m)=v_{mx}(x,z,m)=0 \text{ if }v_{x}(x,z,m)=y^*(m),
\end{cases}
\end{align}
where, the piecewise mapping $\Phi:(0,\beta]\mapsto\R$ is defined by, for all $y\in(0,\beta]$,
\begin{align}\label{eq:Phi}
\Phi(y):=\begin{cases}
    \displaystyle \frac{1}{p}(\lambda m)^p-\lambda my, & y\geq (\lambda m)^{p-1}, \\[1.2em]
    \displaystyle \frac{1-p}{p}y^{\frac{p}{p-1}},&m^{p-1}<y<(\lambda m)^{p-1},\\[0.8em]
    \displaystyle~  \frac{1}{p}m^p-my,&y\leq m^{p-1}.
\end{cases}
\end{align}
Then, we may apply Legendre-Fenchel transform of the solution $v$ only with respect to $x$ that, for all $(y,z,m)\in[y^*(m),\beta]\times\mathbb{R}_+^2$,
\begin{align}\label{eq:LFT-v}
\hat{v}(y,z,m):=\sup_{x>0}\{v(x,z,m)-xy\}. 
\end{align}
Hence, $v(x,z,m)=\inf_{y\in(0,\beta]}(\hat{v}(y,z,m)+xy)$ for $(x,z,m)\in\mathcal{D}$. Define $x^*(y,z,m)=v_x(\cdot,z,m)^{-1}(y)$ with $y\to v_x(\cdot,z,m)^{-1}(y)$ being the inverse function of $x\to v_x(x,z,m)$. Thus, $x^*=x^*(y,z,m)$ satisfies the equation $v_x(x^*,z,m)=y$ for $(z,m)\in\R_+^2$. Using the relationship between $y$ and $v_x(x^*,y,m)$, we get the 
dual PDE of HJB-VI \eqref{HJB.2} with both Neumann/free boundary conditions that, for $(y,z,m)\in [y^*(m),\beta]\times\R_+^2$,
\begin{align}\label{HJB.3}
\begin{cases}
\displaystyle -\rho\hat{v}+\rho y\hat{v}_y+\alpha y^2\hat{v}_{yy}
     +\mu_Zz\hat{v}_z+\frac{\sigma^2_Z}{2}z^2\hat{v}_{zz}-\eta zy\hat{v}_{yz}-(\mu_Z-\eta)zy
     +\Phi(y)=0,\\[0.8em]
\displaystyle \hat{v}_y(\beta,z,m)=0,\\[0.8em]
\displaystyle \hat{v}_m(y^*(m),z,m)=\hat{v}_{ym}(y^*(m),z,m)=\hat{v}_{zm}(y^*(m),z,m)=0
\end{cases}
\end{align}
with the gradient constraint $\hat{v}_m(y,z,m)\leq0$.

\subsection{Derivation of the solution to the dual PDE}\label{sec:solu}

The next result gives the closed-form solution of the dual PDE \eqref{HJB.3}. 
\begin{proposition}\label{prop:sol-v}
Let $\mu_Z\geq \eta$. Consider the piecewise mapping $y^*:[\beta^{\frac{1}{p-1}},\infty)\mapsto(0,\infty)$ defined by, for $m\in[\beta^{\frac{1}{p-1}},\frac{1}{\lambda}\beta^{\frac{1}{p-1}})$, $y^*(m):= m^{p-1}$; while for $ m\geq \frac{1}{\lambda}\beta^{\frac{1}{p-1}}$, $y^*(m)$ is the unique solution to the equation:
\begin{align}\label{eq:r-1}
&\frac{\beta m^{p-1}}{(\alpha+\rho)y^*(m)}+\frac{\beta}{\alpha+\rho}\ln\left(\frac{y^*(m)}{\beta}\right)+\frac{\rho}{(\alpha+\rho)^2}\beta^{-\frac{\rho}{\alpha}} (y^*(m))^{\frac{\alpha+\rho}{\alpha}}-\frac{m^{p-1}}{\alpha+\rho} \beta^{-\frac{\rho}{\alpha}}(y^*(m))^{\frac{\rho}{\alpha}}\nonumber\\
&\quad=
\frac{\alpha\beta^{-\frac{\rho}{\alpha}}}{(\alpha+\rho)^2}\left(\lambda^{\frac{\alpha p-(1-p)\rho}{\alpha}}-1\right)m^{-\frac{(\alpha+\rho)(1-p)}{\alpha}}+\frac{\beta\lambda}{\alpha+\rho}\ln\beta(\lambda m)^{1-p}-\frac{\beta}{\alpha+\rho}\ln(\beta m^{1-p})\nonumber\\
&\qquad+\frac{\lambda \beta}{\alpha+\rho}-\frac{\alpha\beta}{(\alpha+\rho)^2}+\frac{(1-p)^2\beta(\lambda-1)}{p(\alpha+\rho)}-\frac{\beta(\alpha+\rho+p\alpha)(\lambda-1)}{p(\alpha+\rho)^2}+\frac{\beta(1-p)(\lambda-1)}{\alpha+\rho}.
\end{align}
Then, we have:
\begin{itemize}
\item[{\rm(i)}] the mapping $m\mapsto y^*(m)$ is well-defined. Furthermore, $m\to y^*(m)$ is strictly decreasing and satisfies $\lim_{m\to\infty}y^*(m)=0$;
\item[{\rm(ii)}] the solution of the dual PDE \eqref{HJB.3} admits the following closed-form given by
{\small \begin{align}\label{sol:dual-v}
\hat{v}(y,z,m)=
\begin{cases}
 \displaystyle   \frac{1}{\beta}C_1(m)y+\beta^{\frac{\rho}{\alpha}} C_2(m)y^{-\frac{\rho}{\alpha}}+\frac{(\lambda m)^p}{p\rho}+\frac{\lambda m}{\alpha+\rho}y\ln\left(\frac{y}{\beta}\right)\\[0.8em]
 \displaystyle \hspace{4cm} +z\left( y-\frac{\beta^{-\kappa+1}}{\kappa}y^{\kappa}\right),\qquad (\lambda m)^{p-1}<y\leq \beta,  \\[1em]
  \displaystyle    \frac{1}{\beta}C_3(m)y+\beta^{\frac{\rho}{\alpha}}C_4(m)y^{-\frac{\rho}{\alpha}}+\frac{(1-p)^3}{p(\rho(1-p)-\alpha p)}y^{\frac{p}{p-1}}\\[0.8em]
  \displaystyle  \hspace{4cm} +z\left( y-\frac{\beta^{-\kappa+1}}{\kappa}y^{\kappa}\right),\qquad  m^{p-1}<y\leq (\lambda m)^{p-1},\\[1em]
  \displaystyle    \frac{1}{\beta}C_5(m)y+\beta^{\frac{\rho}{\alpha}}C_6(m)y^{-\frac{\rho}{\alpha}}+\frac{m^p}{p\rho}+\frac{ m}{\alpha+\rho}y\ln\left(\frac{y}{\beta}\right)\\[0.7em]
  \displaystyle \hspace{4cm} +z\left( y-\frac{\beta^{-\kappa+1}}{\kappa}y^{\kappa}\right),\qquad y^*(m)\leq  y\leq m^{p-1},
\end{cases}
\end{align}}where, the constant $\kappa$ is given by 
$\kappa:=\frac{-(\rho-\eta-\alpha)+\sqrt{(\rho-\eta-\alpha)^2+4\alpha(\rho-\mu_Z)}}{2\alpha}>0$,
and the coefficients $m\mapsto C_i(m)$ for $i=1,\ldots,6$ are given by \eqref{eq:C1}-\eqref{eq:C6} in Section \ref{sec:proofs}.
\end{itemize}
\end{proposition}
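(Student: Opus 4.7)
The plan is to decouple the $z$-variable from $(y,m)$ via a separation-form ansatz, then solve the induced Cauchy--Euler ODEs piecewise in $y$, and finally pin down all six coefficients $C_{i}(m)$ and the free boundary $y^{*}(m)$ through the Neumann, smooth-fit, and super-contact conditions in \eqref{HJB.3}. First I notice that $z$ enters \eqref{HJB.3} only through $\hat v_{z}$, $\hat v_{zz}$, $\hat v_{yz}$ and the affine source $-(\mu_{Z}-\eta)zy$, which motivates the ansatz $\hat v(y,z,m)=F(y,m)+zG(y)$. Under this ansatz $\hat v_{zz}=0$ and the PDE decouples: collecting the $z$-linear terms yields the Cauchy--Euler ODE
\[
\alpha y^{2}G''(y)+(\rho-\eta)yG'(y)+(\mu_{Z}-\rho)G(y)=(\mu_{Z}-\eta)y,
\]
whose indicial equation $\alpha\kappa^{2}+(\rho-\eta-\alpha)\kappa+(\mu_{Z}-\rho)=0$ has as its positive root precisely the $\kappa$ defined in the proposition. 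A particular solution matching the right-hand side is $G_{p}(y)=y$ (using $\mu_{Z}\geq\eta$ to exclude the degenerate resonance $\kappa=1$); keeping only the homogeneous mode $y^{\kappa}$ and enforcing $\hat v_{y}(\beta,z,m)=0$ restricted to the $z$-linear piece (i.e.\ $G'(\beta)=0$) pins down $G(y)=y-\frac{\beta^{-\kappa+1}}{\kappa}y^{\kappa}$, which is exactly the $z$-part of \eqref{sol:dual-v}.

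Next, matching $z$-independent terms yields $\alpha y^{2}F_{yy}+\rho yF_{y}-\rho F+\Phi(y)=0$, another Cauchy--Euler equation whose homogeneous modes are $y$ and $y^{-\rho/\alpha}$ and whose forcing $\Phi$ is given piecewise by \eqref{eq:Phi}. In Regions I and III, where $\Phi$ is affine in $y$, the constant pieces produce the particular solutions $\frac{(\lambda m)^{p}}{p\rho}$ and $\frac{m^{p}}{p\rho}$, while the linear pieces resonate with the $y$-mode and force $y\ln y$-type particular solutions with coefficients $\frac{\lambda m}{\alpha+\rho}$ and $\frac{m}{\alpha+\rho}$. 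In Region II the non-resonant power forcing $\frac{1-p}{p}y^{p/(p-1)}$ yields by undetermined coefficients the particular solution $\frac{(1-p)^{3}}{p(\rho(1-p)-\alpha p)}y^{p/(p-1)}$. Together with $zG(y)$, this reproduces the three branches of \eqref{sol:dual-v} up to the six constants $C_{i}(m)$.

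I would then determine $C_{1}(m),\ldots,C_{6}(m)$ and $y^{*}(m)$ by the following seven conditions: (a) the Neumann relation $F_{y}(\beta,m)=0$; (b) two $C^{1}$ smooth-fit relations at $y=(\lambda m)^{p-1}$ between Regions I and II (continuity of $\Phi$ automatically upgrades these to $C^{2}$, by subtracting the two copies of the ODE at this point); (c) two $C^{1}$ smooth-fit relations at $y=m^{p-1}$ between Regions II and III; (d) the two super-contact conditions $F_{m}(y^{*}(m),m)=0$ and $F_{ym}(y^{*}(m),m)=0$ at the free boundary, the remaining condition $\hat v_{zm}(y^{*},z,m)=0$ holding automatically because $G$ is $m$-independent. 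Solving sequentially from Region I through Region III expresses $C_{3},\ldots,C_{6}$ in terms of $C_{1}, C_{2}$ and $y^{*}(m)$; the final pair of super-contact equations then determines $C_{1}(m),C_{2}(m)$ and simultaneously produces the transcendental equation \eqref{eq:r-1} for $y^{*}(m)$.

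For part (i), when $m\in[\beta^{1/(p-1)},\beta^{1/(p-1)}/\lambda)$ the three-region structure degenerates and substitution shows that $y^{*}(m)=m^{p-1}$ is consistent with the system above. For $m\geq\beta^{1/(p-1)}/\lambda$ I would rewrite \eqref{eq:r-1} as $\Psi(y,m)=0$ and verify by direct differentiation that $y\mapsto\Psi(y,m)$ is strictly monotone on the relevant interval, so the intermediate value theorem yields a unique root after checking opposite signs of $\Psi$ at two judiciously chosen endpoints. Strict monotonicity of $m\mapsto y^{*}(m)$ would then follow from the implicit function theorem via the signs of $\partial_{y}\Psi$ and $\partial_{m}\Psi$, and $\lim_{m\to\infty}y^{*}(m)=0$ would be extracted by isolating the dominant terms in \eqref{eq:r-1} as $m\to\infty$ and balancing them. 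The main obstacle will be the extensive bookkeeping in step (d): the super-contact conditions couple $y^{*}(m)$ with the $m$-dependence of all six coefficients simultaneously, and the cascade of smooth-fit relations produces numerous compensating terms that must combine to yield the compact form of \eqref{eq:r-1}. A secondary subtlety is proving strict monotonicity of $\Psi(\cdot,m)$, whose algebraic mixtures of $y^{\rho/\alpha}$, $y^{(\alpha+\rho)/\alpha}$ and $\ln y$ carry competing signs; resolving them requires exploiting $\mu_{Z}\geq\eta$ (ensuring $\kappa>0$) together with the lower bound $m\geq\beta^{1/(p-1)}/\lambda$.
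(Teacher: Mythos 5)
Your overall route is the same as the paper's: the separation ansatz $\hat v(y,z,m)=l(y,m)+z\psi(y)$, the Cauchy--Euler ODE for the $z$-part with indicial root $\kappa$ and Neumann condition $\psi'(\beta)=0$, piecewise particular solutions for the $(y,m)$-part driven by $\Phi$, smooth fit at $y=(\lambda m)^{p-1}$ and $y=m^{p-1}$, the Neumann and super-contact conditions, and for part (i) an intermediate-value/implicit-differentiation argument for existence, uniqueness, monotonicity and the limit of $y^*(m)$. At that level the plan is sound.

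There is, however, a genuine gap in how you close the coefficient system. You count seven conditions for the seven unknowns $C_1(m),\dots,C_6(m),y^*(m)$ and assert that the two super-contact equations "determine $C_1(m),C_2(m)$" algebraically. But $\hat v_m$ and $\hat v_{ym}$ at $y=y^*(m)$ involve the derivatives $C_5'(m)$ and $C_6'(m)$, so the free-boundary conditions are first-order ODEs in $m$, not algebraic relations (in the paper, it is the Neumann condition that links $C_1$ and $C_2$, while the super-contact pair yields $C_5'(m)$ and $C_6'(m)$). Consequently your system is underdetermined: integrating the ODE for $C_6$ leaves a free constant, and the paper closes it with the additional normalization $\lim_{m\to\infty}C_6(m)=0$, which produces the improper-integral formula \eqref{eq:C6}. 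One must then verify that this integral converges (using $y^*(\ell)\leq \ell^{p-1}$ and the standing condition on $\rho$) and that $C_6(m)>0$, facts that are also needed downstream (convexity of $\hat v$, Lemma \ref{lem:dual-sol}). Relatedly, the transcendental equation \eqref{eq:r-1} for $y^*(m)$ arises from the compatibility between the algebraic smooth-fit/Neumann relations and these ODEs, not from a purely algebraic elimination; and in part (i) the "endpoint sign check" you defer is precisely the nontrivial inequality that the right-hand side of \eqref{eq:r-1} dominates the minimum of the left-hand side over $(0,m^{p-1}]$, which the paper proves via a monotonicity-in-$m$ argument. Without the extra condition at $m=\infty$ and the convergence/positivity check for $C_6$, your plan cannot reproduce the stated coefficients \eqref{eq:C1}--\eqref{eq:C6}.
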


Let us introduce the constant 
\begin{align}\label{boundrho0}
    \rho_0:=
    \begin{cases}
        \max\{\mu_Z,\max\{2\alpha,\alpha p/(1-p)\}\},&\text{if}~p\in (0,1),\\[0.5em]
        \max\{\mu_Z,0\},&\text{if}~p<0.
    \end{cases}
    \end{align}
Based on Proposition~\ref{prop:sol-v}, we further have the next result.
\begin{lemma}\label{lem:dual-sol}
Let $\mu_Z\geq \eta$  and $\rho>\rho_0$. Then, the function $y\to \hat{v}(y,z,m)$ defined in Proposition \ref{prop:sol-v} is continuous, strictly convex and decreasing.
\end{lemma}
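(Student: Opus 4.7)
The strategy is to verify the three properties directly from the explicit piecewise formula \eqref{sol:dual-v}. Continuity comes essentially for free: the six coefficients $C_1(m),\ldots,C_6(m)$ are fixed by the smooth-fit principle at the interior matching points $y=(\lambda m)^{p-1}$ and $y=m^{p-1}$, together with the Neumann condition $\hat v_y(\beta,z,m)=0$ and the free-boundary conditions at $y=y^*(m)$; matching $\hat v$ and $\hat v_y$ across the two breakpoints yields that $\hat v(\cdot,z,m)\in C^1([y^*(m),\beta])$, and in particular is continuous there.

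For strict convexity I would differentiate \eqref{sol:dual-v} twice in $y$ on each of the three regions. A unifying observation is that the common $z$-term $z\bigl(y-\tfrac{\beta^{-\kappa+1}}{\kappa}y^{\kappa}\bigr)$ contributes $-z\beta^{-\kappa+1}(\kappa-1)y^{\kappa-2}$ to $\hat v_{yy}$. Evaluating the quadratic $\alpha\kappa^2+(\rho-\alpha-\eta)\kappa+(\mu_Z-\rho)=0$ that defines $\kappa$ at $\kappa=1$ gives the value $\mu_Z-\eta\geq 0$ under the hypothesis $\mu_Z\geq \eta$; combined with $\rho>\rho_0\geq \mu_Z$, which makes this parabola strictly negative at $\kappa=0$ and forces $\kappa$ to be the unique positive root, one concludes $\kappa\in(0,1]$, so the $z$-contribution to $\hat v_{yy}$ is non-negative. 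The remaining $z$-free terms are handled region-by-region: in Regions~1 and 3 the logarithmic terms $\tfrac{\lambda m}{\alpha+\rho}\,y\ln(y/\beta)$ and $\tfrac{m}{\alpha+\rho}\,y\ln(y/\beta)$ contribute $\tfrac{\lambda m}{(\alpha+\rho)y}>0$ and $\tfrac{m}{(\alpha+\rho)y}>0$, respectively; in Region~2 the dominant explicit term $\tfrac{(1-p)^3}{p(\rho(1-p)-\alpha p)}y^{p/(p-1)}$ yields $\tfrac{1-p}{\rho(1-p)-\alpha p}\,y^{p/(p-1)-2}>0$, where positivity of the coefficient is precisely the condition $\rho(1-p)>\alpha p$ that $\rho>\rho_0$ enforces in both parameter regimes $p\in(0,1)$ and $p<0$. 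Finally, one tracks the sign of the $y^{-\rho/\alpha-2}$ contributions, i.e.\ of $C_2(m),C_4(m),C_6(m)$; these are read off the linear system \eqref{eq:C1}--\eqref{eq:C6} and shown to yield a non-negative contribution using the free-boundary equation \eqref{eq:r-1} together with $\rho>\rho_0$.

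Strict monotonicity then falls out for free: the Neumann condition $\hat v_y(\beta,z,m)=0$ combined with $\hat v_{yy}(\cdot,z,m)>0$ on $[y^*(m),\beta]$ gives $\hat v_y(y,z,m)<\hat v_y(\beta,z,m)=0$ for every $y\in[y^*(m),\beta)$, hence $y\mapsto\hat v(y,z,m)$ is strictly decreasing. The main technical obstacle is the sign verification of $C_2(m),C_4(m),C_6(m)$: these coefficients couple the three regions through the smooth-fit conditions and incorporate the implicit definition of the free boundary $y^*(m)$, so extracting the sign requires careful bookkeeping and ultimately relies on the monotonicity and limiting behavior of $y^*(m)$ established in Proposition~\ref{prop:sol-v}(i) together with $\rho>\rho_0$.
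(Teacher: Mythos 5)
Your overall route is the paper's route: compute $\hat v_{yy}$ region by region, use $\mu_Z\geq\eta$ and $\rho>\mu_Z$ to get $\kappa\in(0,1]$ so the benchmark term $z\bigl(y-\tfrac{\beta^{1-\kappa}}{\kappa}y^{\kappa}\bigr)$ contributes $z\beta^{1-\kappa}(1-\kappa)y^{\kappa-2}\geq 0$, observe that the logarithmic terms give $\tfrac{\lambda m}{(\alpha+\rho)y}$ and $\tfrac{m}{(\alpha+\rho)y}$ and the Merton-type term gives $\tfrac{1-p}{\rho(1-p)-\alpha p}y^{\frac{p}{p-1}-2}>0$ (with $\rho(1-p)>\alpha p$ enforced by $\rho>\rho_0$), and then deduce strict monotonicity from $\hat v_y(\beta,z,m)=0$ together with strict convexity. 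All of that, including your derivation of $\kappa\in(0,1]$ from the quadratic evaluated at $0$ and $1$, is correct and matches the paper.

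The genuine gap is in your sign bookkeeping for the $y^{-\rho/\alpha}$ coefficients, which you yourself flag as the main obstacle but then dispose of by asserting that $C_2(m)$, $C_4(m)$, $C_6(m)$ all ``yield a non-negative contribution.'' For $C_6(m)>0$ this is indeed available (it is established inside the proof of Proposition \ref{prop:sol-v}, from monotonicity of the integrand and $\lim_{m\to\infty}y^*(m)=0$ — not from the free-boundary equation \eqref{eq:r-1} as you suggest), and $C_2(m)>0$ then follows from \eqref{eq:C2} because $\lambda^{\frac{\alpha p-(1-p)\rho}{\alpha}}\geq 1$ when $\rho(1-p)>\alpha p$ and $\lambda\in(0,1]$. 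But $C_4(m)$ is, by \eqref{eq:C4}, $C_6(m)$ \emph{minus} a strictly positive quantity, and there is no reason it should be non-negative; in the unconstrained limit $\lambda=0$ one has exactly $C_4(m)=0$ (see Corollary \ref{coro:no-constraint}), so its sign for $\lambda>0$ is delicate and your proposed step would either fail or remain unproven. The paper does not claim $C_4(m)\geq 0$: on the middle region it substitutes $C_4=C_6-(\cdot)$ and absorbs the potentially negative piece into part of the term $\tfrac{1-p}{\rho(1-p)-\alpha p}y^{\frac{p}{p-1}-2}$, using the region constraint $y\geq m^{p-1}$; this comparison is tight (equality at $y=m^{p-1}$), which shows that a standalone sign claim on the $C_4$-contribution is not the right mechanism and that the compensation argument is what actually closes the middle region. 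To repair your proof you should replace the ``$C_4$ contributes non-negatively'' step by this pairing of the $C_4$ deficit against the explicit utility term on $m^{p-1}<y\leq(\lambda m)^{p-1}$, keeping $C_6(m)>0$ and $\kappa\leq 1$ for the remaining pieces.
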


\section{Main Results}\label{sec:verification}

In this section, we will show that the value function of problem \eqref{eq:u0} is the inverse transform of $\hat{v}(y,z,m)$ given in \eqref{sol:dual-v} such that we can characterize the optimal investment and consumption in feedback form in terms of the primal variables.

We start with the characterization of the inverse transform of $\hat{v}(y,z,m)$ in \eqref{sol:dual-v}. To do it, introduce the following functions defined on  $(z,m)\in\R_+\times [\beta^{\frac{1}{p-1}},+\infty)$ given by
\begin{align*}
F_1(z,m):=\begin{cases}
\displaystyle -\hat{v}_{y}((\lambda m)^{p-1},z,m)=
-\frac{1}{\beta}C_1(m)+\frac{\rho}{\alpha}\beta^{\frac{\rho}{\alpha}}C_2(m)(\lambda m)^{\frac{(\alpha+\rho)(1-p)}{\alpha}},\\
\displaystyle \qquad\quad-\frac{\lambda m}{\alpha+\rho}\left(\ln\beta^{-1}(\lambda m)^{p-1}+1\right)-z\left(1-\beta^{1-\kappa}(\lambda m)^{(1-\kappa)(1-p)}\right), &\text{if} ~m\geq\frac{1}{\lambda} \beta^{\frac{1}{p-1}}\\
\displaystyle\quad 0, &\text{if}~ m\leq\frac{1}{\lambda} \beta^{\frac{1}{p-1}},
\end{cases}
\end{align*}
and
\begin{align*}
F_2(z,m):=-\hat{v}_{y}( m^{p-1},z,m)&=
-\frac{1}{\beta}C_3(m)+\frac{\rho}{\alpha}\beta^{\frac{\rho}{\alpha}}C_4(m)m^{\frac{(\alpha+\rho)(1-p)}{\alpha}}
\nonumber\\
&\quad+\frac{(1-p)^2}{\rho(1-p)-\alpha p}m-z\left(1-\beta^{1-\kappa}m^{(1-\kappa)(1-p)}\right),\nonumber\\
F_3(z,m):=-\hat{v}_{y}(y^*(m),z,m)&=
-\frac{1}{\beta}C_5(m)+\frac{\rho}{\alpha}\beta^{\frac{\rho}{\alpha}}C_6(m)(y^*(m))^{-\frac{\alpha+\rho}{\alpha}}\nonumber\\
&\quad
-\frac{m}{\alpha+\rho}(\ln\beta^{-1}y^*(m)+1)-z\left(1-\beta^{1-\kappa}(y^*(m))^{\kappa-1}\right),
\end{align*}
where, the function $\hat{v}(y,z,m)$ is given by \eqref{sol:dual-v}. Then, it follows from Lemma \ref{lem:dual-sol} that $0\leq F_1(z,m)\leq F_2(z,m)\leq F_3(z,m)$ for all $(z,m)\in\R_+\times [\beta^{\frac{1}{p-1}},+\infty)$.

\begin{lemma}\label{lem:boundary-m}
Let $\mu_Z\geq \eta$  and $\rho>\rho_0$. For fixed $z\in\R_+$, let $x\mapsto m^*(x,z)$ be the inverse function of $m\mapsto F_3(z,m)$.  Then, the function $m^*(x,z)$ with $(x,z)\in\R_+^2$ is well-defined. Moreover, for the parameter $\lambda\in(0,1]$, there exists a constant $C>0$ such that
\begin{align*}
m^*(x,z)\ln(\beta (m^*(x,z))^{1-p})\leq C(1+ x), \quad \forall (x,z)\in\R_+^2.
\end{align*}
\end{lemma}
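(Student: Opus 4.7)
The plan is to establish the two claims---well-definedness of $m^*(x,z)$ and the growth inequality---in three steps.

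\textbf{Step 1 (well-definedness).} I would first show that for each fixed $z\in\R_+$, the map $m\mapsto F_3(z,m)$ is a strictly increasing continuous bijection from $[\beta^{1/(p-1)},\infty)$ onto $[0,\infty)$. Continuity is immediate from the closed form \eqref{sol:dual-v} together with the continuous dependence of $y^*(m)$ on $m$ provided by Proposition~\ref{prop:sol-v}. For strict monotonicity, I differentiate $F_3(z,m)=-\hat{v}_y(y^*(m),z,m)$ and invoke the super-contact condition $\hat{v}_{ym}(y^*(m),z,m)=0$ from \eqref{HJB.3}:
\[
\partial_m F_3(z,m)=-\hat{v}_{yy}(y^*(m),z,m)\,(y^*)'(m)-\hat{v}_{ym}(y^*(m),z,m)=-\hat{v}_{yy}(y^*(m),z,m)\,(y^*)'(m)>0,
\]
since $\hat{v}_{yy}>0$ by Lemma~\ref{lem:dual-sol} and $(y^*)'(m)<0$ by Proposition~\ref{prop:sol-v}(i). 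At the left endpoint $m=\beta^{1/(p-1)}$ we have $y^*(m)=\beta$, so the Neumann condition $\hat{v}_y(\beta,z,m)=0$ of \eqref{HJB.3} forces $F_3(z,\beta^{1/(p-1)})=0$; surjectivity onto $[0,\infty)$ then follows from the growth estimate in Step~3.

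\textbf{Step 2 (reduction to $z=0$).} A direct analysis of the quadratic defining $\kappa$ shows that under the standing assumption $\mu_Z\geq\eta$ combined with $\rho>\rho_0\geq\mu_Z$ one has $\kappa\leq 1$ (the polynomial $\alpha x^2+(\rho-\eta-\alpha)x+(\mu_Z-\rho)$ is non-negative at $x=1$ precisely when $\mu_Z\geq\eta$). Since $y^*(m)\leq m^{p-1}\leq\beta$ whenever $m\geq\beta^{1/(p-1)}$, we get $\beta^{1-\kappa}(y^*(m))^{\kappa-1}=(\beta/y^*(m))^{1-\kappa}\geq 1$, so the $z$-contribution $-z\bigl(1-\beta^{1-\kappa}(y^*(m))^{\kappa-1}\bigr)$ in $F_3(z,m)$ is non-negative. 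Hence $F_3(z,m)\geq F_3(0,m)$ uniformly in $z\in\R_+$, and inverting yields $m^*(x,z)\leq m^*(x,0)$. It therefore suffices to prove the desired bound at $z=0$.

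\textbf{Step 3 (growth at $z=0$).} The dominant term of $F_3(0,m)$ is
\[
-\frac{m}{\alpha+\rho}\bigl(\ln(\beta^{-1}y^*(m))+1\bigr)=\frac{m}{\alpha+\rho}\bigl(\ln(\beta/y^*(m))-1\bigr).
\]
Using $y^*(m)\leq m^{p-1}$ from the definition of Regions III--IV, we have $\ln(\beta/y^*(m))\geq\ln(\beta m^{1-p})$. Combined with the explicit expressions for $C_5(m),C_6(m)$ derived in Section~\ref{sec:proofs} and the condition $\rho>\rho_0$, which ensures that $-\frac{1}{\beta}C_5(m)$ and $\frac{\rho}{\alpha}\beta^{\rho/\alpha}C_6(m)(y^*(m))^{-(\alpha+\rho)/\alpha}$ grow at most linearly in $m$, I would deduce
\[
F_3(0,m)\geq\frac{m}{\alpha+\rho}\ln(\beta m^{1-p})-C_0(1+m)
\]
for some $C_0>0$. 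Substituting $m=m^*(x,0)$ and using $F_3(0,m^*(x,0))=x$ then gives, after absorbing the linear-in-$m^*$ term using the superlinear growth of $m\mapsto m\ln(\beta m^{1-p})$, the estimate $m^*(x,0)\ln(\beta(m^*(x,0))^{1-p})\leq C(1+x)$ for a universal $C>0$. The bounded-$x$ regime is handled directly by continuity of $F_3$ and the boundary value $F_3(0,\beta^{1/(p-1)})=0$.

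\textbf{Main obstacle.} The crux is Step~3, specifically verifying that $-\frac{1}{\beta}C_5(m)$ and $\frac{\rho}{\alpha}\beta^{\rho/\alpha}C_6(m)(y^*(m))^{-(\alpha+\rho)/\alpha}$ do not destroy the dominant $m\ln m$ lower bound. This requires a careful asymptotic analysis of $y^*(m)$ from the implicit equation \eqref{eq:r-1}---isolating the balance among its power and logarithmic pieces---and uses $\rho>\rho_0$ to suppress spurious growth. The uniform-in-$z$ reduction, by contrast, is the clean part: it follows cleanly from the sign of the $z$-coefficient under $\kappa\leq 1$.
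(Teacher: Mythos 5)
Your Steps 1 and 2 follow the paper's own route: well-definedness comes from $\partial_m F_3(z,m)=-\hat{v}_{yy}(y^*(m),z,m)\,\frac{dy^*(m)}{dm}>0$ (the paper's \eqref{eq:F3-1}--\eqref{eq:F3-2}), and discarding the $z$-term, which is nonnegative because $\kappa\in(0,1]$ and $y^*(m)\leq\beta$, is exactly what the paper does inside its chain of inequalities. The problem is Step 3, and it is a genuine gap, not a presentational one. The premise that $-\frac{1}{\beta}C_5(m)$ grows at most linearly in $m$ is false: by \eqref{eq:C5}, for $m\geq\frac{1}{\lambda}\beta^{\frac{1}{p-1}}$ the coefficient $C_5(m)$ contains $\frac{\beta m}{\alpha+\rho}\bigl[\ln(\beta m^{1-p})-\lambda\ln(\beta(\lambda m)^{1-p})\bigr]$, so $-\frac{1}{\beta}C_5(m)$ carries the superlinear contribution $-\frac{m}{\alpha+\rho}\ln(\beta m^{1-p})+\frac{\lambda m}{\alpha+\rho}\ln(\beta(\lambda m)^{1-p})+O(m)$. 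Its first piece cancels, at leading order, precisely the term you declare dominant, namely $-\frac{m}{\alpha+\rho}\bigl(\ln(\beta^{-1}y^*(m))+1\bigr)\approx\frac{m}{\alpha+\rho}\ln(\beta m^{1-p})$. After this cancellation the surviving superlinear part of $F_3$ has coefficient of order $\lambda$, not $1$; this is exactly the paper's estimate \eqref{eq:F3-3}, $x\geq\frac{\lambda}{\alpha+\rho}m\ln(\beta m^{1-p})-\mathrm{const}$, which is obtained by substituting the explicit formula for $C_5(m)$ and only dropping terms of a definite sign. (The positive term $\frac{\rho}{\alpha}\beta^{\rho/\alpha}C_6(m)(y^*(m))^{-\frac{\alpha+\rho}{\alpha}}$ is also not linearly bounded in general --- its natural bound is of order $m\ln m$, cf. \eqref{yvyy.nume} --- but being positive it is harmless for a lower bound.)

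Consequently your intermediate inequality $F_3(0,m)\geq\frac{m}{\alpha+\rho}\ln(\beta m^{1-p})-C_0(1+m)$ is not only unjustified but false for small $\lambda$. A telling symptom: your Step 3 never uses $\lambda>0$, yet the lemma requires $\lambda\in(0,1]$. At $\lambda=0$ one has $y^*(m)=m^{p-1}$, the logarithmic terms cancel exactly (cf. Corollary \ref{coro:no-constraint}, where $\hat{v}$ becomes independent of $m$), $F_3(0,m)$ grows only linearly in $m$, and the claimed bound $m^*\ln(\beta(m^*)^{1-p})\leq C(1+x)$ fails; by continuity the coefficient in front of $m\ln(\beta m^{1-p})$ is of order $\lambda$ for small $\lambda$, so a bound with coefficient $\frac{1}{\alpha+\rho}$ cannot hold uniformly. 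Your final absorption step and the treatment of the bounded regime (the paper handles $m^*(x,z)\leq\frac{1}{\lambda}\beta^{\frac{1}{p-1}}$ via \eqref{eq:F3-4}) are fine once a lower bound of the form $c_\lambda\, m\ln(\beta m^{1-p})-C_0$ with some $c_\lambda>0$ is available, but producing that bound requires the explicit cancellation bookkeeping in $C_5(m)$ --- the heart of the paper's proof --- which your argument skips.
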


Consider the inverse transform of \eqref{sol:dual-v} that, for all $ (x,z,m)\in\mathcal{D}:=\{(x,z,m)\in{\cal O};~x\leq F_3(z,m)\}$,
\begin{align}\label{v.func.def1}
v(x,z,m)=\inf_{y\in(0,\beta]}\{\hat{v}(y,z,m)+yx\}.
\end{align}
Furthermore, for the inverse function $x\mapsto m^*(x,z)$ of $m\mapsto F_3(z,m)$ given in Lemma \ref{lem:boundary-m}, define
\begin{align}\label{v.func.def2}
 v(x,z,m)=v(x,z,m^*(x,z)),\quad\forall (x,z,m)\in \R^3_+\backslash\mathcal{D}.
\end{align}
The following lemma characterizes the function  $v(x,z,m)$ with $(x,z,m)\in\R_+^3$ defined by \eqref{v.func.def1}-\eqref{v.func.def2}.
\begin{lemma}\label{lem:valuefunc-v}
Let $\mu_Z\geq \eta$  and $\rho>\rho_0$.  Then, $v(x,z,m)$ for $(x,z,m)\in\R_+^3$ is well-defined and  $v \in C^2(\R_+^3)$. Moreover, on the region ${\cal D}$, $v(x,z,m)$ satisfies the equation given by
\begin{align}\label{eq:equation-v1}
\sup_{\theta\in\R^d}\left[\theta^{\top}\mu v_x+\frac{1}{2}\theta^{\top}\sigma\sigma^{\top}\theta v_{xx}+ \theta^{\top} \sigma \gamma\sigma_Z z (v_{x z}-v_{xx})  \right]+\sup_{c\in[\lambda m,m]}\left(U(c)-cv_x\right)\nonumber\\
\qquad\qquad-\sigma_Z^2 z^2 v_{xz}+\frac{1}{2}\sigma_Z^2 z^2 (v_{xx}+v_{zz})+\mu_Z z (v_z-v_x)-\rho v=0
\end{align}
with Neumann boundary condition $ v_x(0,z,m)=\beta$  and {the free boundary condition that} 
$$v_m(x,z,m^*(x,z))=v_{xm}(x,z,m^*(x,z))=v_{zm}(x,z,m^*(x,z))=0.$$ 
On the region $\R^3_+\backslash\mathcal{D}$,  $v(x,z,m)$ satisfies Eq.~\eqref{eq:equation-v1} with  $v_m(x,z,m)=0$ and the boundary condition $v_x(0,z,m)=\beta$. 
\end{lemma}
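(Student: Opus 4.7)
The plan is to invert the Legendre--Fenchel transform on $\mathcal{D}$ via convex duality, extend to $\R^3_+\setminus\mathcal{D}$ as a flat extension in $m$, and use the super-contact conditions to paste the two pieces in $C^2$; the primal PDE~\eqref{eq:equation-v1} will then follow from substituting the duality identities into the dual PDE~\eqref{HJB.3}. First, the strict convexity of $y\mapsto\hat v(y,z,m)$ from Lemma~\ref{lem:dual-sol} makes $y\mapsto -\hat v_y(y,z,m)$ strictly increasing on $[y^*(m),\beta]$, with $-\hat v_y(\beta,z,m)=0$ from the Neumann condition in \eqref{HJB.3} and $-\hat v_y(y^*(m),z,m)=F_3(z,m)$ by definition; hence for each $(x,z,m)\in\mathcal{D}$ there is a unique $y^*_x=y^*_x(x,z,m)\in[y^*(m),\beta]$ solving $-\hat v_y(y^*_x,z,m)=x$, the infimum in \eqref{v.func.def1} is attained at $y^*_x$, and $v(x,z,m)=\hat v(y^*_x,z,m)+xy^*_x$ with $v_x=y^*_x$ by the envelope theorem. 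Outside $\mathcal{D}$, Lemma~\ref{lem:boundary-m} provides a well-defined $m^*(x,z)$, so \eqref{v.func.def2} makes sense, and the two pieces coincide on the interface $\{x=F_3(z,m)\}=\{m=m^*(x,z)\}$, where $y^*_x=y^*(m)$.

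For $C^2$ regularity inside $\mathcal{D}$ I would apply the implicit function theorem to $-\hat v_y(y^*_x,z,m)=x$: the $C^2$ smoothness of $\hat v$ (explicit from \eqref{sol:dual-v}, with smooth-fit across the internal joins $y=m^{p-1}$ and $y=(\lambda m)^{p-1}$ built into Proposition~\ref{prop:sol-v}) and $\hat v_{yy}>0$ yield $y^*_x\in C^2$ together with the duality identities $v_{xx}=-1/\hat v_{yy}$, $v_{xz}=-\hat v_{yz}/\hat v_{yy}$, $v_{zz}=\hat v_{zz}-\hat v_{yz}^2/\hat v_{yy}$, $v_z=\hat v_z(y^*_x,z,m)$, and $v_m=\hat v_m(y^*_x,z,m)$. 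The critical step is $C^2$ pasting across the interface $m=m^*(x,z)$, where $y^*_x=y^*(m)$: the free boundary condition $\hat v_m(y^*(m),z,m)=0$ and the super-contact conditions $\hat v_{ym}(y^*(m),z,m)=\hat v_{zm}(y^*(m),z,m)=0$ from \eqref{HJB.3} force $v_m=0$, $v_{xm}=-\hat v_{ym}/\hat v_{yy}=0$, and $v_{zm}=\hat v_{zm}-\hat v_{yz}\hat v_{ym}/\hat v_{yy}=0$ on the $\mathcal{D}$ side; on the other side $v$ is independent of $m$, so these derivatives vanish trivially, producing $C^2$ matching across the interface.

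Finally, to verify the primal PDE, I substitute $y=v_x$, $\hat v=v-xv_x$, $-\hat v_y=x$ and the second-derivative identities above into \eqref{HJB.3}: the $\alpha y^2\hat v_{yy}$ term becomes $-\alpha v_x^2/v_{xx}$, the $z$-terms recombine into $\mu_Zzv_z-\sigma_Z^2z^2v_{xz}+\tfrac12\sigma_Z^2z^2(v_{xx}+v_{zz})$, the piecewise $\Phi(y)$ from \eqref{eq:Phi} becomes exactly $\sup_{c\in[\lambda m,m]}(U(c)-cv_x)$, and $-\rho\hat v+\rho y\hat v_y=-\rho v$; this reproduces \eqref{eq:equation-v1} together with the feedback controls \eqref{feedback.opti}, while $v_x(0,z,m)=\beta$ follows from $y^*_x(0,z,m)=\beta$. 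On $\R^3_+\setminus\mathcal{D}$, $v_m\equiv 0$ by construction while the remaining derivatives agree with their values at $(x,z,m^*(x,z))$ on the interface, where \eqref{eq:equation-v1} has already been established. The main obstacle I anticipate is the $C^2$ matching at $m=m^*(x,z)$: simultaneous vanishing of $v_{xm}$ and $v_{zm}$ (not only $v_m$) genuinely depends on the full super-contact conditions $\hat v_{ym}=\hat v_{zm}=0$ beyond the first-order $\hat v_m=0$, and careful chain-rule tracking of $\partial_m y^*_x=-\hat v_{ym}/\hat v_{yy}$ is required to see that all mixed derivatives from the two sides agree.
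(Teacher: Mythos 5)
Your proposal is correct and follows essentially the same route as the paper's own proof: well-definedness from Lemmas \ref{lem:dual-sol} and \ref{lem:boundary-m}, inversion of the Legendre--Fenchel transform with the duality identities substituted back into the dual PDE \eqref{HJB.3} to recover \eqref{eq:equation-v1} with the Neumann condition, and the flat-in-$m$ extension \eqref{v.func.def2} pasted across the free boundary using $\hat v_m=\hat v_{ym}=\hat v_{zm}=0$, exactly as in the paper's chain-rule identities \eqref{eq:v-xmz}. Your write-up is in fact more explicit than the paper's ``direct calculation''; the only detail you leave tacit (as does the paper) is the matching of $v_{mm}$, which follows by differentiating $\hat v_m(y^*(m),z,m)=0$ in $m$ and using $\hat v_{ym}(y^*(m),z,m)=0$.
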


\begin{figure}[h]
    \centering
\includegraphics[width=8cm]{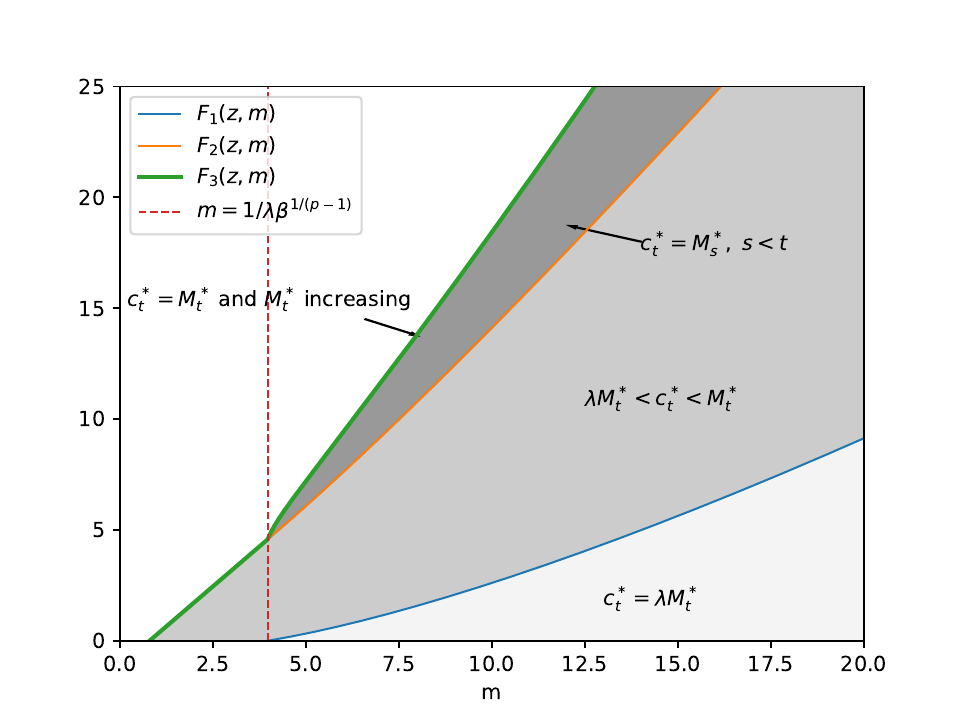}
\caption{{\small Boundary curves in terms of $m$ with parameters $\rho=2,\,p=-2,\,\lambda=0.2,\,\mu=0.01,\,\sigma=0.02,\,\beta=2,\,\mu_Z=\sigma_Z=0.5$ and $z=20$.}}\label{F_m}
\end{figure}

Next, we provide the optimal feedback strategy of portfolio and consumption in terms of $(x,z,m)\in\R_+^3$. We first have 
\begin{lemma}\label{lem:feedback-control}
Assume that $\mu_Z\geq \eta$  and $\rho>\rho_0$. Let $x\mapsto f(x,z,m)$ be the inverse function of $y\mapsto-\hat{v}_y(y,z,m)$. Introduce the (feedback) control functions as follows, for $(x,z,m)\in\mathcal{D}$,
\begin{align}\label{theta.def}
&\theta^*(x,z,m):=-(\sigma\sigma^{\top})^{-1}\frac{v_x(x,z,m)\mu+(v_{xz}-v_{xx})(x,z,m)z\sigma_Z\sigma\gamma}{v_{xx}(x,z,m)},
\end{align}
and
\begin{align}\label{consum.def}
  c^*(x,z,m):=
  \begin{cases}
      \displaystyle ~~~~~\lambda m, & ~~~~0\leq x<F_1(z,m),\\[0.7em]
      \displaystyle (f(x,z,m))^{\frac{1}{p-1}}, & F_1(z,m)\leq x<F_2(z,m),\\[0.7em]
      \displaystyle ~~~~~~m, & F_2(z,m)\leq x\leq F_3(z,m).
  \end{cases}
\end{align}
For $(x,z,m)\in\R^3_+\backslash\mathcal{D}$, introduce that $\theta^*(x,z,m):=\theta^*(x,z,m^*(x,z))$ and $c^*(x,z,m):=c^*(x,z,m^*(x,z))=m^*(x,z)$.
Then, there exist positive constants $(M_{\theta}, M_c)$ such that, for all $(x,z,m)\in\R_+^3$,
\begin{align*}
|\theta^*(x,z,m)|\leq M_{\theta}(1+x+z),\quad |c^*(x,z,m)|\leq M_c(1+x+m).
\end{align*}
\end{lemma}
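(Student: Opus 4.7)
My plan is to translate the feedback formulas \eqref{theta.def}--\eqref{consum.def} into the dual variable $y = v_x(x,z,m) \in [y^*(m),\beta]$ via the standard Legendre identities $v_{xx} = -1/\hat{v}_{yy}$ and $v_{xz} = -\hat{v}_{yz}/\hat{v}_{yy}$, and then exploit the closed-form dual solution \eqref{sol:dual-v} together with its affine-in-$z$ structure $\hat{v}(y,z,m) = A_i(y,m) + z B(y)$ on each of the three subregions $i=1,2,3$, where $B(y) := y - \beta^{1-\kappa}y^\kappa/\kappa$. This decomposition provides two key ingredients: first, $\hat{v}_{yz}(y,z,m) = B'(y) = 1 - \beta^{1-\kappa}y^{\kappa-1}$, so that $1 - \hat{v}_{yz} = \beta^{1-\kappa}y^{\kappa-1}$ is independent of $z$; second, by rearranging $x = -\hat{v}_y(y,z,m) = -A_{i,y}(y,m) - zB'(y)$, one obtains the crucial identity
\begin{align*}
\beta^{1-\kappa}\,z\,y^{\kappa-1} \;=\; x + z + A_{i,y}(y,m),
\end{align*}
which ties the awkward quantity $zy^{\kappa-1}$ directly to the primal coordinates.

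For $c^*$, the case $(x,z,m)\in\mathcal{D}$ is essentially immediate: Region I gives $c^*=\lambda m$, Region III gives $c^*=m$, and in Region II one has $c^* = y^{1/(p-1)}$ with $y\in(m^{p-1},(\lambda m)^{p-1}]$, forcing $c^*\in[\lambda m,m]$. Hence $c^*\leq m$ throughout $\mathcal{D}$. For $(x,z,m)\in\R_+^3\setminus\mathcal{D}$, the definition $c^*(x,z,m) = m^*(x,z)$ together with the bound $m^*(x,z)\ln(\beta(m^*(x,z))^{1-p})\leq C(1+x)$ from Lemma \ref{lem:boundary-m} gives $m^*(x,z)\leq M_c(1+x)$, since the left-hand side is super-linear in $m^*$ once $m^*$ is bounded away from zero. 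This settles the consumption estimate.

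For $\theta^*$, the dual reformulation yields
\begin{align*}
\theta^*(x,z,m) \;=\; (\sigma\sigma^\top)^{-1}\bigl[y\mu\,\hat{v}_{yy}(y,z,m) + \beta^{1-\kappa}y^{\kappa-1}\,z\,\sigma_Z\sigma\gamma\bigr],
\end{align*}
and the affine decomposition gives $y\hat{v}_{yy}(y,z,m) = yA_{i,yy}(y,m) - (\kappa-1)\beta^{1-\kappa}z\,y^{\kappa-1}$. The key identity above immediately controls the $z$-linear pieces by $x + z + |A_{i,y}(y,m)|$, so the remaining task is to bound $|A_{i,y}(y,m)|$ and $|yA_{i,yy}(y,m)|$ uniformly for $y\in[y^*(m),\beta]$ by $C(1+x)$ alone. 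I would carry this out region by region by direct differentiation of the expressions for $A_i(y,m)$ in \eqref{sol:dual-v}, using the explicit formulas for the coefficients $C_i(m)$ together with the monotonicity and asymptotic $\lim_{m\to\infty}y^*(m)=0$ from Proposition \ref{prop:sol-v}(i).

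The principal obstacle will be the cancellation of the $m$-dependent coefficients---in particular $C_5(m)$, $C_6(m)$ and the logarithmic term $m\ln(y/\beta)$ in Region III, which are pinned down by the super-contact and smooth-fit conditions at the implicit free boundary $y^*(m)$---so that the final bound for $\theta^*$ contains no residual $m$-dependence. Once the region-by-region estimate is in place on $\mathcal{D}$, the extension $\theta^*(x,z,m) = \theta^*(x,z,m^*(x,z))$ on $\R_+^3\setminus\mathcal{D}$, combined with the bound $m^*(x,z)\leq C(1+x)$ derived above, completes the argument by absorbing the $m^*(x,z)$-dependence into the $(1+x)$-growth.
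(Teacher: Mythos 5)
Your dual-variable reduction is essentially the paper's own strategy: the identity $\theta^*=(\sigma\sigma^{\top})^{-1}\bigl[y\hat{v}_{yy}\,\mu+\beta^{1-\kappa}y^{\kappa-1}z\,\sigma_Z\sigma\gamma\bigr]$, the observation that $z\bigl(\beta^{1-\kappa}y^{\kappa-1}-1\bigr)=x+l_y(y,m)\leq x$ (your ``key identity'', which is exactly the paper's bound $|z\hat{v}_{yz}|\leq x$), and the consumption estimate via Lemma \ref{lem:boundary-m} all reproduce the published argument correctly. The problem is that you have only named, not proved, the step that constitutes the real content of the lemma: the bound $|yA_{i,yy}(y,m)|\leq C(1+x)$ uniformly in $m$. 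In Region III, for instance, $y\,l_{yy}(y,m)=\frac{\rho(\alpha+\rho)}{\alpha^2}\beta^{\frac{\rho}{\alpha}}C_6(m)\,y^{-\frac{\alpha+\rho}{\alpha}}+\frac{m}{\alpha+\rho}$ diverges at least linearly in $m$, so the bound can only hold because $x\geq -l_y(y,m)$ itself grows comparably in $m$ on that region; what is needed is a ratio estimate of the form $y\,l_{yy}(y,m)\leq C\,(-l_y(y,m))+C$, and this is precisely where the paper spends most of its proof.

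The tools you propose for this step---``direct differentiation'' of the closed-form coefficients together with the monotonicity of $y^*$ and $\lim_{m\to\infty}y^*(m)=0$---are not sufficient. The dangerous quantity is $C_6(m)\,(y^*(m))^{-\frac{\alpha+\rho}{\alpha}}/m$ (and its analogues at $y=m^{p-1}$, $(\lambda m)^{p-1}$), where $C_6(m)=\int_m^{\infty}(\cdots y^*(\ell)\cdots)\,d\ell$: its behaviour depends on the \emph{rate} at which $y^*(m)\to 0$, not merely on the fact that it does. The paper has to extract from the algebraic equation \eqref{eq:r-1} the sharp asymptotics $\ln\bigl(\beta/y^*(m)\bigr)=\ln m^{1-p}+\ln\ln m^{\beta(1-p)\lambda}+o(1)$ (its \eqref{eq.lim.r*}--\eqref{eq:lim-rstart}), and only with this two-sided rate can it control the numerator \eqref{yvyy.nume} and verify that the denominator $-l_y(y,m)/m$ stays bounded away from zero for large $m$ (\eqref{yvyy.deno.2}), with separate constant bounds on compact ranges of $m$ and analogous estimates in Regions I and II. Without establishing this asymptotic expansion of the free boundary (or an equivalent quantitative cancellation between $C_5(m)$, $C_6(m)$ and the logarithmic terms), your plan cannot be completed as stated, so the proposal has a genuine gap at the decisive step of the portfolio estimate.
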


Now, we are ready to show the verification result, which proves that the function $v(x,z,m)$ introduced by \eqref{v.func.def1}-\eqref{v.func.def2} is indeed the value function for problem \eqref{eq:u0} and the admissible strategy induced by the feedback control functions $\theta^*(x,z,m)$ and $c^*(x,z,m)$ defined by \eqref{theta.def} is the optimal strategy of investment and consumption.

\begin{theorem}\label{thm:verification}
Let $\mu_Z\geq \eta$ and $\rho>\rho_0$ with $\rho_0$ given by \eqref{boundrho0}. Recall the function $v(x,z,m)$ introduced by \eqref{v.func.def1}-\eqref{v.func.def2} and the feedback control function $(\theta^*(x,z,m),c^*(x,z,m))$ given by \eqref{theta.def}. 
Consider the controlled state process $(X^*,Z,M^*)=(X^*_t,Z_t,M^*_t)_{t\geq0}$ that obeys the following reflected SDE, for $(t,x,z,m)\in\R_+^4$,
\begin{align}\label{eq:optimal-SDE}
\begin{cases}
\displaystyle X^*_t=x+\int_0^t(\theta^*(X_s^*,Z_s,M_s^*))^{\top}\mu ds+\int_0^t(\theta^*(X_s^*,Z_s,M_s^*))^{\top}\sigma dW_s \\[0.7em]
\displaystyle \qquad\quad -\int_0^t c^*(X_s^*,Z_s,M_s^*) ds-\int_0^t \mu_Z Z_sds-\int_0^t \sigma_Z Z_sdW_s^{\gamma}+ L^{X^*}_t,\\[0.8em]
\displaystyle M^*_t=\max\left\{m,\sup_{s\in[0,t]}m^*(X_s^*,Z_s)\right\},\\[1em]
\displaystyle Z_t=z+\int_0^{t}\mu_zZ_sds+\int_0^t\sigma_ZZ_sdW^{\gamma}_s
\end{cases} 
\end{align}
with $L^{X^*}_0=0$ and $M_0^*=m$. Define  $\theta^*_t=\theta^*(X^*_t,Z_t,M^*_t)$ and $c^*_t=c^*(X^*_t,Z_t,M^*_t)$ for all $t\geq0$. Then, the strategy pair $(\theta^*,c^*)=(\theta^*_t,c^*_t)_{t\geq0}\in\mathbb{U}^r$ is an optimal investment-consumption strategy in the sense that, for all admissible $(\theta,c)\in\mathbb{U}^r$,
\begin{align}\label{eq:value-func}
    \mathbb{E}\left[\int_0^{\infty}e^{-\rho t}U(c_t)dt-\beta\int_0^{\infty}e^{-\rho t}dL_t^X\right]\leq v(x,z,m),\quad \forall (x,z,m)\in\R^3_+,
\end{align}
and the equality holds when $(\theta,c)=(\theta^*,c^*)$.
\end{theorem}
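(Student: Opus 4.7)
The plan is to carry out a verification argument in the reflected-state, running-maximum setting. For an arbitrary admissible pair $(\theta,c)\in\mathbb{U}^r$, I would first apply It\^o's formula to the discounted process $t\mapsto e^{-\rho t}v(X_t,Z_t,M_t)$, which is legitimate by the $C^2$ regularity in Lemma \ref{lem:valuefunc-v}. The expansion splits into four pieces: a drift integral with integrand $e^{-\rho s}[\mathcal{L}^{\theta_s,c_s}v-\rho v](X_s,Z_s,M_s)$, a local martingale (to be handled via a localizing sequence $\tau_n\uparrow\infty$), an integral $\int_0^t e^{-\rho s}v_x(0,Z_s,M_s)\,dL_s^X$, and an integral $\int_0^t e^{-\rho s}v_m(X_s,Z_s,M_s)\,dM_s$. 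The HJB equation \eqref{eq:equation-v1} bounds the drift above by $-e^{-\rho s}U(c_s)$, the Neumann boundary condition $v_x(0,z,m)=\beta$ turns the $dL^X$ integral into $\beta\int_0^t e^{-\rho s}dL_s^X$, and the sign constraint $v_m\le 0$ combined with the monotonicity of $M$ makes the $dM$ integral non-positive. Taking expectation through the localization and invoking Fatou/BDG yields the fundamental one-sided inequality
\begin{equation*}
v(x,z,m)\ge \Ex\!\left[\int_0^t e^{-\rho s}U(c_s)\,ds-\beta\int_0^t e^{-\rho s}dL_s^X\right]+\Ex\!\left[e^{-\rho t}v(X_t,Z_t,M_t)\right].
\end{equation*}

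Next I would send $t\to\infty$, which rests on the transversality condition $\lim_{t\to\infty}\Ex[e^{-\rho t}v(X_t,Z_t,M_t)]=0$. Because $X$ is a reflected controlled diffusion and the running maximum $M$ admits no direct a priori growth bound, the primal side is too weak for a straightforward estimate. This is precisely where the auxiliary reflected dual process $Y=(Y_t)_{t\ge 0}$ from \eqref{eq:Yt} enters: the duality inequality $v(X_t,Z_t,M_t)\le \hat v(Y_t,Z_t,M_t)+X_tY_t$, obtained from the definition \eqref{v.func.def1} of $v$ as an inverse Legendre transform, lets me transfer transversality from the primal to the dual side. Since $Y$ is reflected at the upper level $\beta$, its exponential moments and those of the cross term $X_tY_t$ are tractable; the hypotheses $\mu_Z\ge\eta$ and $\rho>\rho_0$ are tailored so that the discount outpaces the growth of $Z_t$ and of each polynomial/logarithmic piece in the three regions of \eqref{sol:dual-v}. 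The estimates packaged in Lemmas \ref{lem:Yt} and \ref{lem:transversality} then supply the region-by-region bound, delivering $v(x,z,m)\ge J(x,z,m;\theta,c)$ in the limit.

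To close the argument I would verify $(\theta^*,c^*)\in\mathbb{U}^r$ and show that every inequality above becomes an equality along the feedback strategy. Admissibility reduces to well-posedness of the reflected SDE \eqref{eq:optimal-SDE}: the linear-growth bounds $|\theta^*|\le M_\theta(1+x+z)$ and $|c^*|\le M_c(1+x+m)$ from Lemma \ref{lem:feedback-control}, together with standard Skorokhod-map and Gronwall estimates for reflected diffusions driven by the GBM $Z$, yield a unique strong solution $(X^*,M^*)$ satisfying $\Ex[\int_0^t(c^*_s+|\theta^*_s|^2)\,ds]<\infty$. Along $(\theta^*,c^*)$ the supremum in \eqref{eq:equation-v1} is attained by the very construction of the feedback maps, so the drift inequality tightens to equality; simultaneously, the free boundary condition $v_m(x,z,m^*(x,z))=0$, combined with the fact that $M^*$ only increases on the set $\{M^*_t=m^*(X^*_t,Z_t)\}$ by its definition as a running supremum, forces the $dM^*$ integral to vanish identically. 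Hence It\^o's formula gives equality in the limit and $v(x,z,m)=J(x,z,m;\theta^*,c^*)$.

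The principal obstacle is unambiguously the transversality step. Unlike the classical Black--Scholes setting of \cite{Bahman2019,DLPY22,LYZ24}, here $X$ is a reflected controlled diffusion with no off-the-shelf state-price-density duality, and the unbounded running maximum $M$ feeds back into both the free boundary and the admissibility constraint. The reflected dual $Y_t$ sidesteps these obstructions because it is automatically confined to $(0,\beta]$ and admits moment bounds uniform in the control. The delicate work is the case-by-case verification that $\rho>\rho_0$ is sharp for extinguishing the growth of every component of $\hat v(Y_t,Z_t,M_t)+X_tY_t$ after discounting---in particular handling the logarithmic pieces in \eqref{sol:dual-v} and the polynomial term arising in the middle region $m^{p-1}<y\le(\lambda m)^{p-1}$---which is the technical heart of the argument and is exactly the content abstracted into Lemmas \ref{lem:Yt} and \ref{lem:transversality}.
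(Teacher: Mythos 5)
Your overall architecture (It\^o expansion, the Neumann condition to convert the $dL^X$ integral into $\beta\int e^{-\rho s}dL^X_s$, the reflected dual process $Y_t=v_x(X^*_t,Z_t,M^*_t)$ for transversality, and the vanishing of the $dM^*$ term along the optimum via $v_m(x,z,m^*(x,z))=0$) matches the paper's for the equality part. However, there is a genuine gap in your inequality direction: you dispose of the $dM$ integral for an arbitrary admissible $(\theta,c)$ by invoking the sign constraint $v_m\le 0$. That is precisely what is \emph{not} available at this stage: Lemma \ref{lem:valuefunc-v} only yields $v_m=0$ on the free boundary and on $\R^3_+\backslash\mathcal{D}$, and Remark \ref{rem:HJB-VI} states explicitly that the gradient constraint $v_m\le 0$ (equivalently $\hat v_m\le 0$) is not verified beforehand and is only recovered \emph{as a consequence} of the verification theorem; using it in the proof is circular as written. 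The paper avoids the issue by freezing the third argument: for a generic admissible control it applies It\^o's formula to $e^{-\rho s}v(X_s,Z_s,m)$ with the initial $m$ (see \eqref{eq:itoveri}), so no $dM$ term and no jump term ever appear, and only the HJB inequality at the fixed $m$ together with $v_x(0,z,m)=\beta$ is used. Unless you prove $v_m\le 0$ directly (which the paper describes as requiring tedious computations with $y^*(m)$ and the $C_i(m)$), your step does not go through.

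A second, smaller misstep concerns where transversality is needed. For arbitrary admissible controls you claim the limit $\lim_{T\to\infty}\Ex[e^{-\rho T}v(X_T,Z_T,M_T)]=0$ and propose to obtain it from the dual process; but the reflected dual of Lemma \ref{lem:Yt} is defined only along the \emph{optimal} path, since its dynamics rely on the feedback controls attaining the suprema in the HJB equation, so it cannot be deployed for a generic $(\theta,c)$. It is also unnecessary there: in that direction one only needs $\liminf_{T\to\infty}\Ex[e^{-\rho T}v(X_T,Z_T,m)]\ge 0$, which the paper obtains from the pointwise lower bound $v(x,z,m)\ge v(0,0,m)-\frac{\beta}{\kappa}z$ together with $\rho>\mu_Z$. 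The full transversality \eqref{eq:transversality} via Lemmas \ref{lem:Yt} and \ref{lem:transversality} is invoked only for $(\theta^*,c^*)$, where your description (duality representation $v=\hat v+X^*Y$, the bound by $KY_T^{p/(p-1)}$, and admissibility via the linear growth of Lemma \ref{lem:feedback-control}) does agree with the paper.
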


\begin{remark}\label{rem:HJB-VI}
 Recall that Proposition \ref{prop:sol-v} provides an explicit classical solution  to the dual PDE \eqref{HJB.3} but not verify if the inequality constraint $\hat{v}_m(y,z,m)\leq 0$ is satisfied. It requires quite  tedious calculations due to the implicit expression of $y^*(m)$ and the coefficient functions $C_i(m)$ for $i=1,...,6$.
 In turn, Lemma \ref{lem:valuefunc-v} does not show that the value function defined by \eqref{v.func.def1}-\eqref{v.func.def2} satisfies the HJB-VI \eqref{eq:HJB-v} as it remains to prove $v_m(x,z,m)\leq 0$ for all $(x,z,m)\in\R_+^3$. However, it becomes an obvious result after Theorem \ref{thm:verification} in view of definition of admissible set $\mathbb{U}^r$ and the value function given by \eqref{eq:u0}.
\end{remark}

\begin{remark}\label{rem:XV}
In fact, the state processes of the primal control problem \eqref{eq:w} and the auxiliary control problem \eqref{eq:u0} satisfy the following relationship:
\begin{align}\label{eq:state}
X_t&=V_t^{\theta,c}-Z_t+\sup_{s\leq t}\left(Z_s-V_s^{\theta,c}\right)^+.
\end{align}
Therefore, we can obtain the auxiliary state process $(X_t)_{t\geq 0}$ by using the process $(V^{\theta,c},Z)=(V_t^{\theta,c},Z_t)_{t\geq 0}$. It shows that the optimal control $(\theta^*,c^*)=(\theta_t^*,c_t^*)_{t\geq0}$ in Theorem \ref{thm:verification} actually has the path-dependent structure in terms of the wealth process $V^{\theta,c}$ and the benchmark process $Z$, which will make the decision making intractable based on the direct study of the control problem using the original wealth process.  This justifies the main advantage of working with the auxiliary state process $X$ in the present paper, which significantly simplifies the problem and enables us to derive the optimal control $(\theta^*,c^*)$ in feedback form of $X$. 
\end{remark}

The following lemma shows that the expectation of the total optimal discounted capital injection is always finite and positive.
\begin{lemma}\label{lem:inject-captial}
Let the assumptions of Theorem \ref{thm:verification} hold. Consider the optimal strategy of investment and consumption $(\theta^*,c^*)=(\theta_t^*,c_t^*)_{t\geq 0}$ given in Theorem \ref{thm:verification}. Then, it holds that
\begin{itemize}
\item[{\rm(i)}] The  expectation of the discounted total capital injection under the optimal strategy $(\theta^*,c^*)$ is finite that, for all $(x,z,m)\in\R_+^3$,
\begin{align}\label{eq:dAstarfinite}
\Ex\left[\int_0^{\infty} e^{-\rho t}dA^*_t \right]\leq\frac{1}{|p|}\Ex\left[\int_0^{\infty} e^{-\rho t} Y_t^{\frac{p}{p-1}} dt\right]-v(x,z,m)
<+\infty.
\end{align}
\item[{\rm(ii)}] The  expectation of the discounted total capital injection under the optimal strategy $(\theta^*,c^*)$ is strictly positive that, for all $(x,z,m)\in\R_+\times (0,\infty)^2$,
\begin{align}\label{eq:dAstarpositive}
\Ex\left[\int_0^{\infty} e^{-\rho t}dA^*_t \right]\geq \max\left\{z\frac{1-\kappa}{\kappa}\left(1+\frac{x}{z}\right)^\frac{\kappa}{\kappa-1},\frac{\lambda m}{\alpha+\rho}e^{-\frac{\alpha+\rho}{\lambda m}x}\right\}>0.
\end{align}
\end{itemize}
 Here,  the optimal capital injection under the optimal strategy $(\theta^*,c^*)$ is given by $A^*_t =0\vee \sup_{s\leq t}(Z_{s}-V_{s}^{\theta^*,c^*})$ for $t\geq0$.
\end{lemma}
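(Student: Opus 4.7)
My plan is to combine the optimality identity from Theorem \ref{thm:verification} with duality via the reflected dual process $Y_t$ in \eqref{eq:Yt} for (i), and with the Legendre--Fenchel structure of $\hat v$ in \eqref{sol:dual-v} for (ii). Under the optimal control $(\theta^*,c^*)$, the continuous part of $A^*_t$ coincides with the local time $L^{X^*}_t$ (any initial jump $(z-\mathrm v)^+$ vanishes when $x=(\mathrm v-z)^+>0$), so Theorem \ref{thm:verification} delivers
\[
\beta\,\Ex\!\left[\int_0^\infty e^{-\rho t}dA^*_t\right] = \Ex\!\left[\int_0^\infty e^{-\rho t}U(c^*_t)dt\right] - v(x,z,m).
\]

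For part (i), I apply the Fenchel--Young inequality $U(c^*_t) \le \tilde U(Y_t) + c^*_t Y_t$, where the convex conjugate $\tilde U(y) = \frac{1-p}{p}y^{p/(p-1)}$ satisfies the crude bound $\tilde U(y) \le \frac{1}{|p|}y^{p/(p-1)}$ in both cases $p\in(0,1)$ (by direct comparison of coefficients, since $\frac{1-p}{p} = \frac{1}{p} - 1 < \frac{1}{p}$) and $p<0$ (trivially, since $\tilde U<0$ while $\frac{1}{|p|}y^{p/(p-1)}>0$). The cross term $\Ex[\int e^{-\rho t}c^*_t Y_t\,dt]$ is absorbed via the duality inequality \eqref{eq:hatv-y}, while the $Y_t^{p/(p-1)}$-integral is finite by the moment estimates of Lemmas \ref{lem:Yt} and \ref{lem:transversality} under $\rho>\rho_0$. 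Combining these yields both the finiteness and the stated upper bound.

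For part (ii), I exploit the representation $v(x,z,m) = \inf_{y\in(0,\beta]}\{\hat v(y,z,m)+xy\}$. The benchmark-dependent block in \eqref{sol:dual-v} is $z(y - \beta^{1-\kappa}y^{\kappa}/\kappa)$, which is decreasing in $y$ on $(0,\beta]$ with value zero at $y=0$ and hence nonpositive there (using $\kappa\in(0,1]$ under $\mu_Z\ge\eta$), so $v(x,z,m)\le v(x,0,m)$ for all $z\ge 0$. For the first lower bound, I minimize $(x+z)y - (z/\kappa)\beta^{1-\kappa}y^\kappa$ in $y$: the explicit interior minimizer $y_0 = \beta(1+x/z)^{1/(\kappa-1)}\in(0,\beta]$ yields minimum value $-\beta z(1-\kappa)/\kappa\,(1+x/z)^{\kappa/(\kappa-1)}$, and inserting this into the starting identity while absorbing the residual $m$-dependent part of $\hat v$ via the utility baseline $\Ex[\int e^{-\rho t}U(c^*_t)dt]\ge U(\lambda m)/\rho$ (which follows from $c^*_t\ge\lambda m$) produces the claim. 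For the second lower bound, I exploit the Region-I particular solution $(\lambda m)^p/(p\rho) + (\lambda m/(\alpha+\rho))y\ln(y/\beta)$, whose Legendre--Fenchel inverse in $y$ is an exponential in $x$: choosing a test point $y_0\in(0,\beta]$ aligned with this particular solution and calibrated against the Neumann condition $\hat v_y(\beta,z,m)=0$ in the upper bound $v(x,z,m)\le v(x,0,m)\le \hat v(y_0,0,m)+xy_0$, and again dominating the $m$-dependent residual by $\Ex[\int e^{-\rho t}U(c^*_t)dt]\ge (\lambda m)^p/(p\rho)$, isolates the exponential $\frac{\lambda m}{\alpha+\rho}e^{-(\alpha+\rho)x/(\lambda m)}$ as the desired bound.

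The principal difficulty is the rigorous handling of the cross-term $\Ex[\int e^{-\rho t}c^*_t Y_t\,dt]$ in (i): this requires applying It\^o's formula along the reflected trajectories of $X^*$, exploiting $\hat v_y(\beta,z,m)=0$ to cancel the local-time boundary contributions from $dL^{X^*}$, and passing to $T\to\infty$ via the moment bounds of Lemmas \ref{lem:Yt}--\ref{lem:transversality} under $\rho>\rho_0$. A secondary subtlety in (ii) is ensuring that the $m$-dependent residuals coming from the closed-form coefficients $C_i(m)$ do not spoil the clean dominance by the utility lower bounds, which is particularly delicate near the free boundary $y^*(m)$ where the piecewise structure of $\hat v$ transitions between regions.
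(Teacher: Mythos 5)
Your starting identity (the verification theorem giving $\Ex[\int_0^\infty e^{-\rho t}dA^*_t]$ as expected utility minus $v(x,z,m)$, up to the factor $\beta$) is the same as the paper's, but from there both halves of your argument diverge from the paper and leave genuine gaps. For part (i), the paper does not use Fenchel--Young at all: it reads off from the piecewise feedback form \eqref{eq:dual-consumption} the pointwise bound $U(c^*_t)\le \frac{1}{|p|}Y_t^{\frac{p}{p-1}}$ along the optimal path and integrates, so no cross term ever appears. Your route produces the extra term $\Ex[\int_0^\infty e^{-\rho t}c^*_tY_t\,dt]$, and the mechanism you invoke to ``absorb'' it does not do the job: \eqref{eq:hatv-y} is a pointwise bound on $\hat v-y\hat v_y$ (used for the transversality condition) and says nothing about the time-integral of $c^*_tY_t$; the paper establishes no budget-type duality identity for the reflected dual process $Y$ that would let you trade this integral against $-v(x,z,m)$. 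The gap is not cosmetic: in Region I the drawdown floor binds, Fenchel--Young is strict there ($c^*_t=\lambda M^*_t\neq Y_t^{1/(p-1)}$), and pointwise $\tilde U(Y_t)+c^*_tY_t$ strictly exceeds $\frac{1}{|p|}Y_t^{\frac{p}{p-1}}$, so without a new integral cancellation (e.g.\ an It\^o argument on $e^{-\rho t}X^*_tY_t$ with all boundary local-time terms and a transversality estimate, none of which you carry out) you cannot recover the stated inequality \eqref{eq:dAstarfinite}.

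For part (ii) the paper's proof is a pathwise comparison argument, not a dual test-point evaluation: since $c^*_t>0$ (resp.\ $c^*_t\ge\lambda m$ and $Z_t\ge 0$), the optimal injection dominates the minimal injection in two simpler auxiliary problems --- pure benchmark tracking with zero consumption, and pure injection minimization with constant consumption $\lambda m$ and no benchmark --- whose values are the explicit expressions $z\frac{1-\kappa}{\kappa}(1+\frac{x}{z})^{\frac{\kappa}{\kappa-1}}$ and $\frac{\lambda m}{\alpha+\rho}e^{-\frac{\alpha+\rho}{\lambda m}x}$. Your alternative, evaluating $v(x,z,m)\le \hat v(y_0,z,m)+xy_0$ at the minimizer of the $z$-block (that computation is fine, and the minimum $-\beta z\frac{1-\kappa}{\kappa}(1+x/z)^{\kappa/(\kappa-1)}$ is correct), only yields \eqref{eq:dAstarpositive} if the remaining $m$-dependent part $l(y_0,m)$ of $\hat v$ is dominated by $\Ex[\int_0^\infty e^{-\rho t}U(c^*_t)dt]$, for which you offer the baseline $U(\lambda m)/\rho$. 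But $l(y_0,m)-\frac{(\lambda m)^p}{p\rho}$ contains the strictly positive term $\beta^{\rho/\alpha}C_2(m)y_0^{-\rho/\alpha}$ (and analogous $C_i(m)$ terms in the other pieces), so the claimed dominance does not follow from sign inspection and would require delicate estimates of the implicit coefficients $C_1(m),\dots,C_6(m)$ and of where $y_0$ sits relative to the thresholds $(\lambda m)^{p-1}$, $m^{p-1}$, $y^*(m)$; the same unproven residual control underlies your sketch of the exponential bound. As written, both lower bounds in (ii) are therefore asserted rather than proved, whereas the paper's comparison argument sidesteps the dual coefficients entirely.
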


\begin{remark}
We impose two assumptions $\mu_Z\geq \eta$ and $\rho>\rho_0$ on the model parameters in this paper. From a mathematical standpoint, the condition $\mu_Z\geq \eta$  is not only important to show the strict convexity of the dual function $y \to \hat{v}(y,z,m)$, but is also crucial in demonstrating the strict concavity of the value function $x \to v(x,z,m)$ (cf. Lemma \ref{lem:dual-sol}), thereby it is needed for the well-posedness of the problem. Furthermore, the assumption $\rho>\rho_0$ is important in showing that  both  the value function and the capital injection are finite (cf. Theorem \ref{thm:verification} and  Lemma \ref{lem:inject-captial}). 

From an economic perspective, the assumption $\mu_Z \geq \eta$ implies that the fund manager in our problem only focuses on benchmark processes that perform sufficiently well in the market:
\begin{itemize}
    \item Case $1$ $(\sigma_Z = 0)$: the benchmark process reduces to $Z_t = z e^{\mu_Z t}$ for $t\geq0$, where $\mu_Z \geq 0$ describes a deterministic growth rate, as studied in \cite{YaoZZ06}.
    \item Case $2$ $(\sigma_Z > 0)$: the condition $\mu_Z \geq \eta$ is equivalent to $\frac{\mu_Z}{\sigma_Z} \geq \gamma^\top \sigma^{-1} \mu$, meaning the benchmark’s Sharpe ratio must be sufficiently high. In practice, fund managers would only select benchmarks with high Sharpe ratios, making this assumption not restrictive.
\end{itemize}
\end{remark}

The following result shows that, when the drawdown constraint vanishes as the parameter $\lambda=0$, problem \eqref{eq:u0} simplifies to the optimal tracking problem with no consumption constraint in \cite{BHY23a}.
\begin{corollary}\label{coro:no-constraint}
Let assumptions of Theorem \ref{thm:verification} hold.  Then, for fixed $(x,z,m)\in\R_+^3$, the value function $v(x,z,m)$ given by \eqref{eq:u0}.  is non-increasing w.r.t. the fraction parameter $\lambda$. In particular, when $\lambda=0$, the value function admits the form given by, for all $(x,z,m)\in\R_+^3$,
\begin{align}\label{valuefunc-non-constraint}
v(x,z,m)&=\frac{(1-p)^2}{\rho(1-p)-\alpha p}\beta^{\frac{1}{p-1}}f(x,z)+\frac{(1-p)^3}{p(\rho(1-p)-\alpha p)}f(x,z)^{\frac{p}{p-1}}+xf(x,z)\nonumber\\
&\quad+z\left( f(x,z)-\frac{\beta^{-\kappa+1}}{\kappa}f(x,z)^{\kappa}\right),
\end{align}
where the function $f(x,z)$ is uniquely determined by
\begin{align}\label{f-non-constraint}
x=\frac{(1-p)^2}{\rho(1-p)-\alpha p}\left(f(x,z)^{\frac{1}{p-1}}-\beta^{\frac{1}{p-1}}\right)+z\left( \beta^{-\kappa+1}f(x,z)^{\kappa-1}-1\right).
\end{align}
Furthermore, the optimal feedback control function is given by, for all $(x,z,m)\in\R_+^3$,
\begin{align}\label{feedbackcontrol-non-constraint}
\begin{cases}
\displaystyle \theta^*(x,z,m)=
 (\sigma\sigma^{\top})^{-1}\mu \left(\frac{1-p}{\rho(1-p)-\alpha p}f(x,z)^{\frac{1}{p-1}}+(1-\kappa) \beta^{-(\kappa-1)}zf(x,z)^{\kappa-1}\right)\\[0.8em]
\displaystyle \qquad\qquad\qquad+(\sigma\sigma^{\top})^{-1}\sigma_Z\sigma\gamma z \beta^{-(\kappa-1)}f(x,z)^{\kappa-1},\\[0.8em]
\displaystyle c^*(x,z,m)= f(x,z)^{\frac{1}{p-1}}.
\end{cases}
\end{align}
\end{corollary}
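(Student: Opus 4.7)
For the monotonicity claim, my plan is a direct admissible-set comparison. For $0\leq \lambda_1\leq \lambda_2\leq 1$, any admissible pair $(\theta,c)$ whose consumption satisfies $c_t\geq \lambda_2 M_t$ a fortiori satisfies $c_t\geq \lambda_1 M_t$, so the admissible set $\mathbb{U}^{\rm r}$ associated with $\lambda_2$ is contained in that associated with $\lambda_1$. Since the integrand in \eqref{eq:u0} does not depend explicitly on $\lambda$, taking the supremum over the smaller set can only yield a smaller value, which gives the desired monotonicity.

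For the explicit $\lambda=0$ case, the key observation is that the drawdown constraint $c_t\geq 0$ is already implicit in the admissibility requirement, so when $\lambda=0$ the running-maximum state $M$ no longer enters either the constraints or the objective of \eqref{eq:u0}. Consequently the value function becomes independent of $m$ and the problem collapses to the pure relaxed benchmark tracking problem studied in \cite{BHY23a}. In the decomposition of Section \ref{sec:3-1}, this corresponds to $y_1(m)=(\lambda m)^{p-1}\to\infty$ so that Region I disappears, and the $m$-dependent pieces of $\Phi$ in \eqref{eq:Phi} collapse to the single expression $\Phi(y)=\frac{1-p}{p}y^{p/(p-1)}$ valid on all of $(0,\beta]$. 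I would therefore seek $\hat{v}(y,z)$ solving the simplified linear dual PDE in the ansatz form $\hat{v}(y,z)=A_1 y+A_2 y^{-\rho/\alpha}+\frac{(1-p)^3}{p(\rho(1-p)-\alpha p)}y^{p/(p-1)}+z\bigl(y-\frac{\beta^{-\kappa+1}}{\kappa}y^\kappa\bigr)$, in which the first two terms solve the homogeneous equation, the third is the particular response to $\Phi$, and the $z$-coupled term handles the cross-coefficient $-(\mu_Z-\eta)zy$. The coefficient $A_2$ of the singular mode $y^{-\rho/\alpha}$ must vanish since, in the absence of any free boundary when $m$ is irrelevant, the only admissible decay is the one avoiding the blow-up at $y=0$; equivalently, in the general formula \eqref{sol:dual-v} the free boundary satisfies $y^*(m)\to 0$ (Proposition \ref{prop:sol-v}(i)) and no smooth-fit condition at a positive $y^*$ is available to support a nonzero $A_2$. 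The remaining coefficient is then pinned down by the Neumann boundary $\hat{v}_y(\beta,z)=0$, giving $A_1=\frac{(1-p)^2}{\rho(1-p)-\alpha p}\beta^{1/(p-1)}$.

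To finish, I would apply the inverse Legendre-Fenchel transform $v(x,z)=\inf_{y\in(0,\beta]}\{\hat{v}(y,z)+xy\}$. The first-order condition $x=-\hat{v}_y(f(x,z),z)$ reproduces \eqref{f-non-constraint}, and substituting $y=f(x,z)$ back into $\hat{v}(y,z)+xy$ gives \eqref{valuefunc-non-constraint}. For the feedback controls, note that $v_x=f$ by the envelope theorem; implicit differentiation of \eqref{f-non-constraint} with respect to $x$ and $z$ yields $f_x<0$ under $\rho>\rho_0$, the key identity $f_z/f_x=1-\beta^{-\kappa+1}f^{\kappa-1}$, and an explicit expression for $-f/f_x$. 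Plugging these into \eqref{theta.def} reproduces the portfolio formula in \eqref{feedbackcontrol-non-constraint}, while $c^*=v_x^{1/(p-1)}=f^{1/(p-1)}$ is immediate from \eqref{consum.def} since only the middle (Region II) piece of the feedback is active. The main technical obstacle is justifying $A_2=0$ rigorously, i.e., extending the analysis of Proposition \ref{prop:sol-v} to the degenerate $\lambda=0$ regime; this would be handled either by a careful limit of the coefficient $C_4(m)$ as $\lambda\downarrow 0$ or, more cleanly, by directly applying the verification argument of Theorem \ref{thm:verification}, whose assumptions $\mu_Z\geq \eta$ and $\rho>\rho_0$ are preserved under $\lambda=0$, to the candidate $v(x,z)$ built from the $A_2=0$ ansatz, thereby confirming that it coincides with the supremum in \eqref{eq:u0}.
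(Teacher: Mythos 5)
Your monotonicity argument is exactly the paper's (admissible-set inclusion $\mathbb{U}^{\rm r}_{\lambda_2}\subset\mathbb{U}^{\rm r}_{\lambda_1}$), and your overall route for $\lambda=0$ — $m$-independent dual function of the separated form, Neumann condition pinning the linear coefficient, inverse Legendre transform giving \eqref{f-non-constraint} and \eqref{valuefunc-non-constraint}, then \eqref{theta.def}--\eqref{consum.def} for the feedbacks — is the same skeleton as the paper's, and the coefficient you obtain ($A_1=\frac{(1-p)^2}{\rho(1-p)-\alpha p}\beta^{\frac{1}{p-1}}$) agrees with $C_3(m)/\beta$ there.

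The genuine gap is the one you flag yourself: the vanishing of the $y^{-\rho/\alpha}$ coefficient. Your stated justification is not right as written: at $\lambda=0$ the free boundary does \emph{not} disappear or tend to $0$ — Proposition \ref{prop:sol-v}(i) asserts $y^*(m)\to0$ only as $m\to\infty$ for fixed $\lambda>0$, whereas at $\lambda=0$ the defining dichotomy in Proposition \ref{prop:sol-v} (the case $m<\frac{1}{\lambda}\beta^{\frac{1}{p-1}}$, which at $\lambda=0$ covers all $m\geq\beta^{\frac{1}{p-1}}$) gives $y^*(m)=m^{p-1}>0$, so a smooth-fit/free boundary is still present; it simply coincides with $y_2(m)$. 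Likewise your claim that $\Phi$ collapses to $\frac{1-p}{p}y^{\frac{p}{p-1}}$ on all of $(0,\beta]$ is not literally true (the branch $y\leq m^{p-1}$ retains its $m$-dependence), though this is harmless because the dual PDE is only posed on $[y^*(m),\beta]=[m^{p-1},\beta]$. The paper closes the gap by a direct computation rather than a growth/limit argument: plugging $y^*(\ell)=\ell^{p-1}$ into \eqref{eq:C6} gives $C_6(m)=\frac{\alpha^3\beta^{-\rho/\alpha}}{\rho(\alpha+\rho)^2(\rho(1-p)-\alpha p)}m^{\frac{\alpha p-(1-p)\rho}{\alpha}}$ in closed form, whence \eqref{eq:C4} yields $C_4(m)\equiv0$ and \eqref{eq:C3} yields $C_3(m)=\frac{(1-p)^2}{\rho(1-p)-\alpha p}\beta^{\frac{p}{p-1}}$, so the dual function is $m$-independent and optimality follows from Theorem \ref{thm:verification} exactly as in the general case — no $\lambda\downarrow0$ limit of $C_4$ and no separate verification of the $A_2=0$ candidate is needed. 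If you replace your blow-up heuristic by this two-line evaluation of $C_6$, your proof becomes complete and essentially identical to the paper's.
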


\section{Numerical Examples}\label{sec:numerical}
In this section, we present some numerical examples to illustrate the sensitivity analysis with respect to some model parameters and discuss their financial implications based on the optimal feedback functions and the expected total capital injection obtained in \eqref{theta.def}. To ease the discussions, we only consider the case $d=1$ in all examples.

Figure \ref{simulation} plots the simulated sample paths of the benchmark process $Z^*_t$, the controlled wealth process $V^*_t$, the cumulative capital injection $A^*_t$, the optimal consumption $c^*_t$ and the optimal consumption running maximum process $M^*_t$, respectively. We fix the model parameters $\rho=2,\,p=-0.1,\,\lambda=0.2,\,\mu=0.1,\,\sigma=0.1,\,\beta=2,\,\mu_Z=0.01,\,\sigma_Z=0.05,\,z=10,\,v=20,\,m=6$.
Initially, with $M^*_0=6$, $V^*_0=20$, and $Z_0=10$, higher wealth leads to a larger $c^*$ and growing $M^*$. As wealth fluctuates downwards, $c^*_t$ dynamically decreases to prevent costly capital injections. When wealth becomes very low, $c^*_t$ is constrained to its minimum, $\lambda M^*_t$, demonstrating the consumption drawdown constraint. In the latter part of the graph, as $V^*_t$ drops below $Z^*_t$, the system automatically injects capital, ensuring $V^*_t+A^*_t  \geq Z_t$. During this period, $c^*_t$ remains at its minimum constrained level.

We first examine the sensitivity of the optimal portfolio, the optimal consumption and the expected capital injection in Figure \ref{fig:lambda} with respect to the drawdown constraint parameter $\lambda$. Let us fix the model parameters $\rho=2,\,p=-0.1,\,\mu=0.01,\,\sigma=0.02,\,\beta=2,\,\mu_Z=\sigma_Z=0.05,\,z=10,m=20$ and plot the curves with $\lambda=0, 0.05, 0.1, 0.5, 1$, respectively. Being consistent with intuition, when the drawdown constraint parameter $\lambda$ tends to zero, the optimal portfolio, the optimal consumption and the expected capital injection converge to their counterparts of the optimal tracking problem with no consumption constraint (i.e., $\lambda =0$) in \cite{BHY23a}. More importantly, when the wealth level $x$ is sufficiently high, the optimal consumption in the case of $\lambda>0$ is in fact lower than the unconstrained one with $\lambda=0$, indicating that the drawdown constraint may suppress the consumption behavior in the large wealth regime as the aggressive consumption leads to a larger drawdown reference process. We also observe that
the optimal portfolio with $\lambda>0$ is  higher than the unconstrained case with $\lambda=0$ so that the drawdown constraint leads to a larger investment amount in the financial market to ensure the drawdown constraint to be sustainable. Figure \ref{fig:lambda} also shows that a higher capital injection is needed to support a larger consumption drawdown constraint.

\begin{figure}[h]
    \centering
\includegraphics[width=8cm]{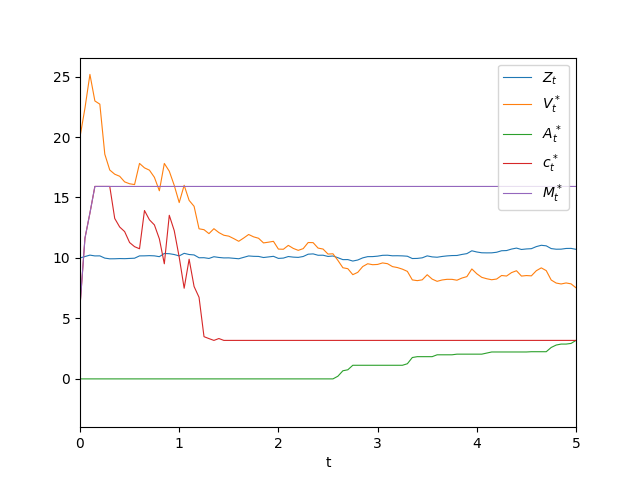}
\caption{{\small Simulation of sample path}}\label{simulation}
\end{figure}

\begin{figure}[h]
    \centering
    \begin{tabular}{c@{\extracolsep{\fill}}c@{\extracolsep{\fill}}c}
            \includegraphics[width=0.34\linewidth]{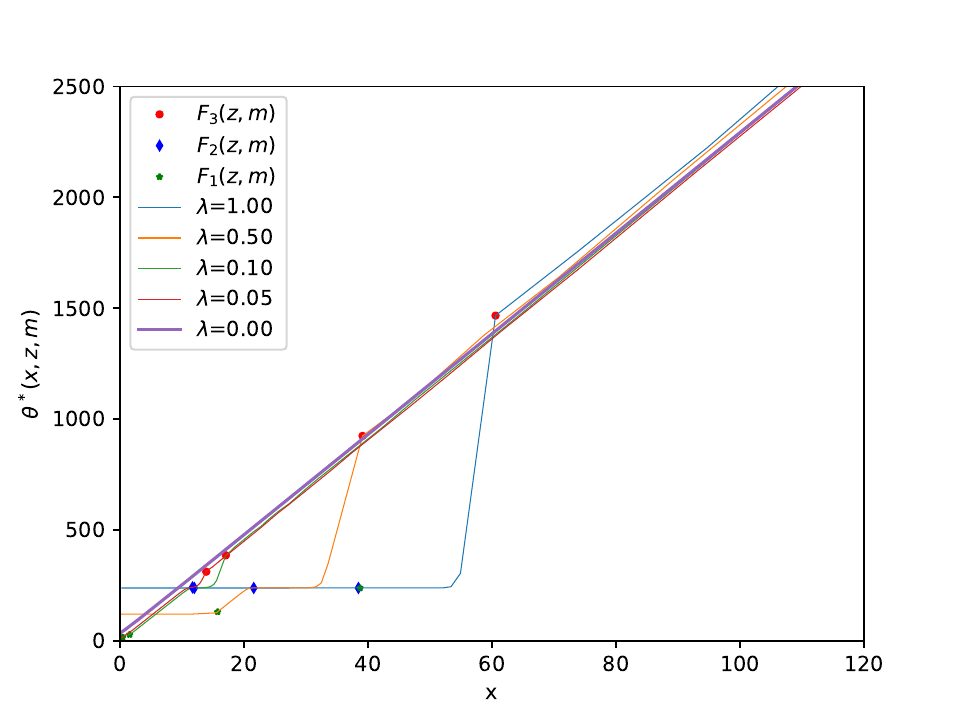} &
            \includegraphics[width=0.34\linewidth]{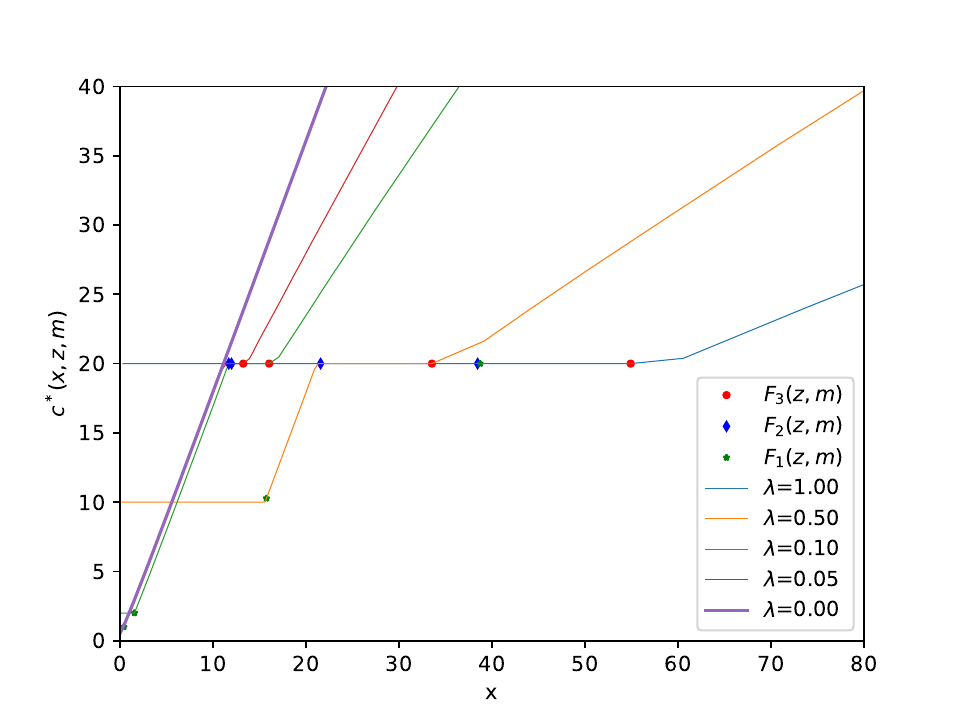}&
            \includegraphics[width=0.34\linewidth]{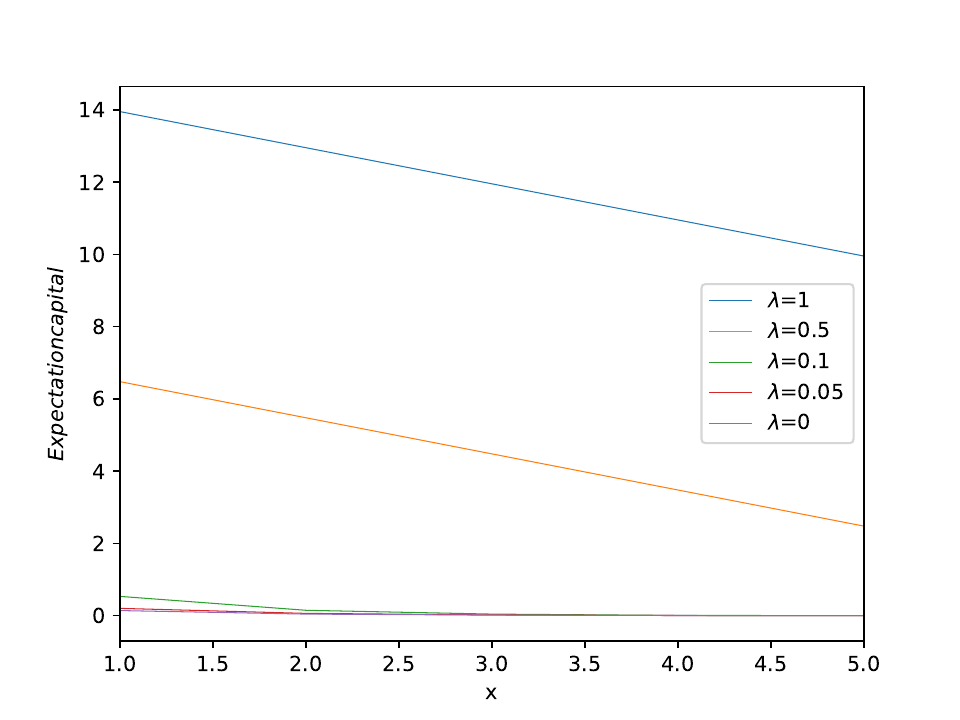}\\
            {\small(a) Optimal portfolio} & {\small(b) Optimal consumption} &{\small(c) Expected capital injection}\\
    \end{tabular}
    \caption{{\small Sensitivity results w.r.t. $\lambda$}}
    \label{fig:lambda}
 \end{figure}

We next analyze the sensitivity results with respect to the capital injection cost parameter $\beta$ in Figure \ref{fig:beta}. Let us fix model parameters $\rho=2,\,p=-0.1,\,\mu=0.01,\,\sigma=0.02,\,\lambda=0.2,\,\mu_Z=\sigma_Z=0.5,\,z=20,m=6$ and plot the optimal portfolio, the optimal consumption and the expected capital injection with varying $\beta=2, 4, 30, 40, 50$. It is not surprising to see in panel (c) of Figure \ref{fig:beta} that the larger 
capital cost parameter $\beta$, the less the capital injection. As the cost parameter $\beta$ increases, the fund manager tends to choose a smaller consumption to reduce the required capital injection. Meanwhile, the fund manager will strategically reduce the investment in the risk assets to avoid the unnecessary capital injection caused by the volatility of the controlled wealth process.

\begin{figure}[h]
    \centering
    \begin{tabular}{c@{\extracolsep{\fill}}c@{\extracolsep{\fill}}c}
            \includegraphics[width=0.34\linewidth]{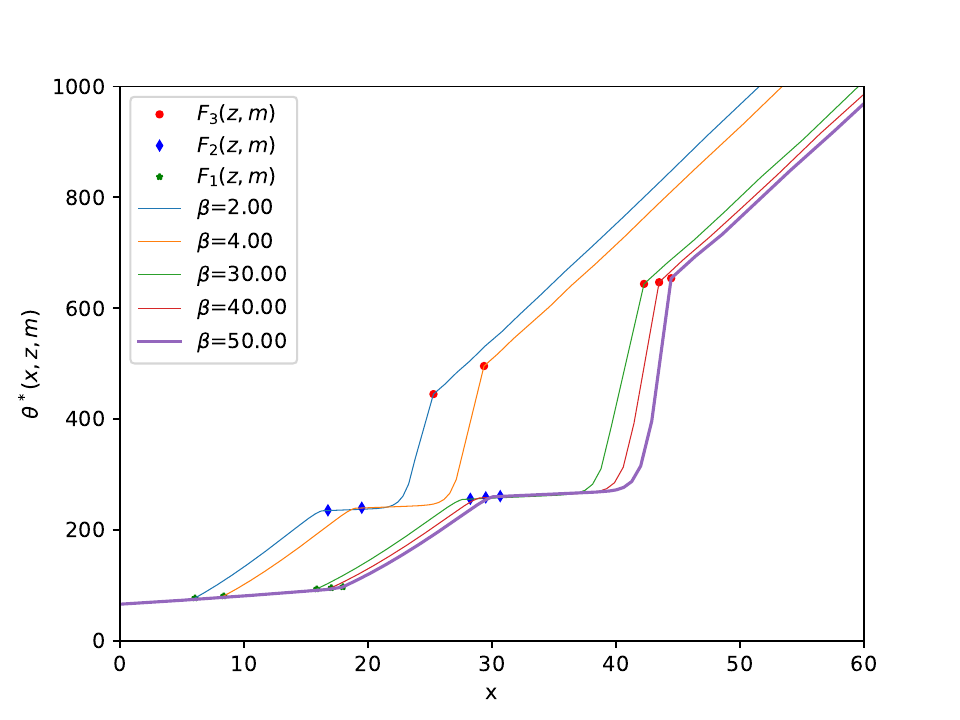} &
            \includegraphics[width=0.34\linewidth]{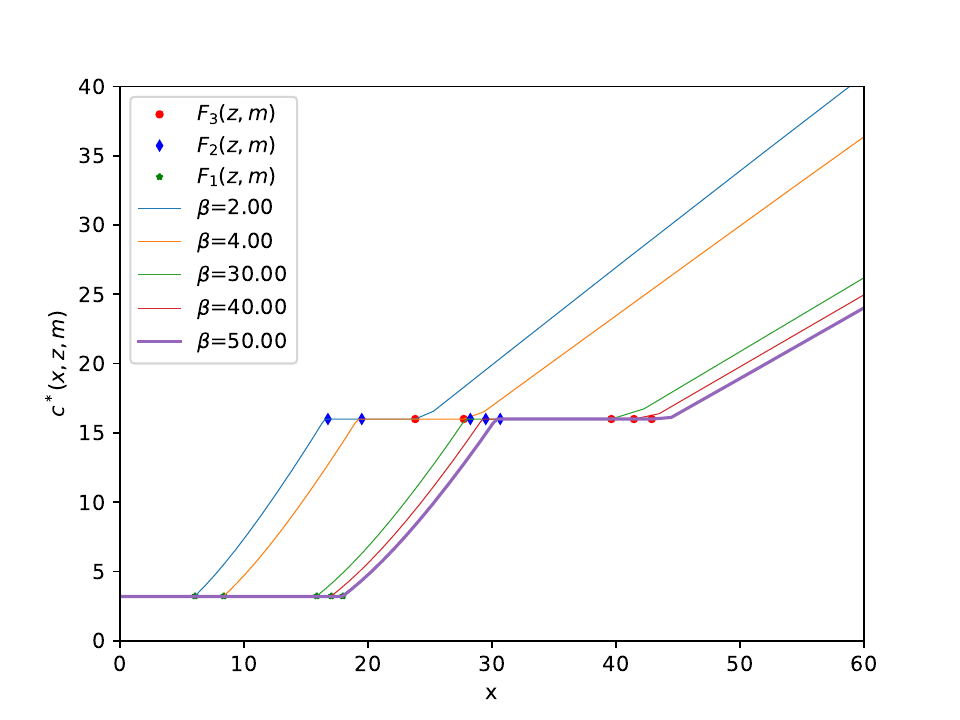}&
            \includegraphics[width=0.34\linewidth]{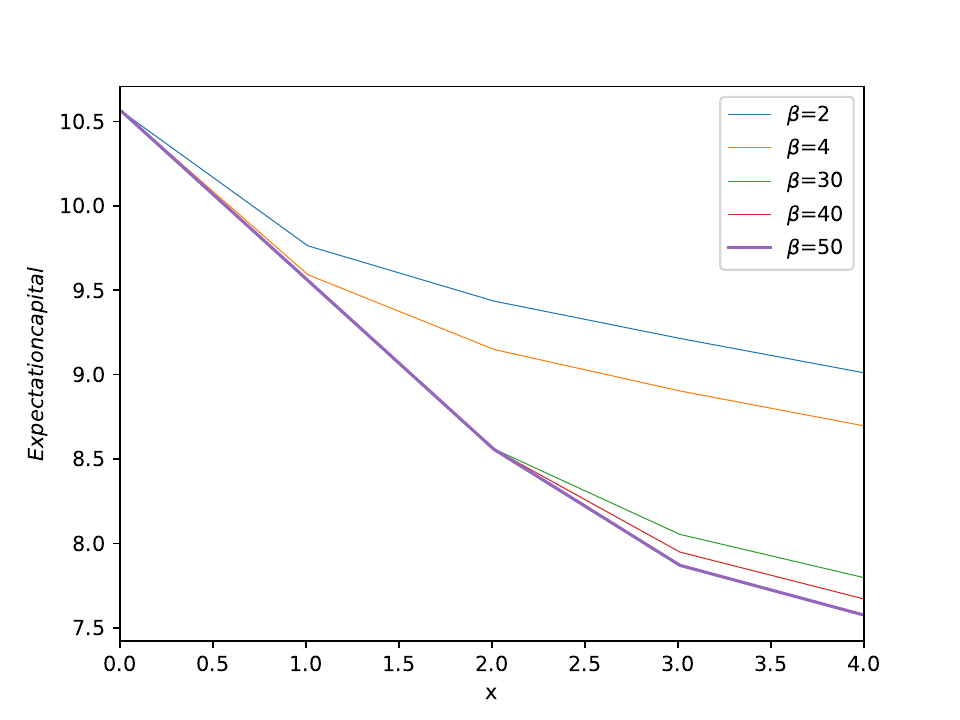} \\
            {\small(a) Optimal portfolio} & {\small(b) Optimal consumption} & {\small(c) Expected capital injection}\\
    \end{tabular}
    \caption{{\small Sensitivity results w.r.t. $\beta$.}}
    \label{fig:beta}
 \end{figure}

To understand how the market performance affects the optimal decision in our formulation, we also plot the sensitivity results w.r.t. the excessive returns $\mu=0.004, 0.008, 0.012, 0.016$ in Figure \ref{fig:mu} while fixing other model parameters $\rho=2,\,p=-0.1,\,\sigma=0.02,\,\lambda=0.2,\,\beta=2,\,\mu_Z=\sigma_Z=0.5,\,z=20,~ m=6$. From the panel (a) of Figure \ref{fig:mu}, it can be observed that the better the market performs, the more wealth the fund manager is willing to allocate into the market. It is also interesting to see from panel (b) that a higher excessive return $\mu$ results in a larger consumption plan, which is opposite of the result in the classical Merton's problem. This new phenomenon can be explained by the fact that the flexibility in capital injection may increase risk taking attitude of the agent. Particularly, when the market return is good, the necessary amount of capital injection to fulfil the benchmark tracking constraint is significantly reduced. The injected capital might be mainly used to support the more aggressive consumption behavior. Comparing with Merton's formulation under the possible bankruptcy restriction, the capital injection will incentivize the agent to spend more gains from the financial market on consumption when the market performance is good because the agent can strategically inject capital to lift up the wealth whenever it falls down a threshold.

Figure \ref{fig:theta/x} presents the sensitivity results of the optimal portfolio share $\theta^*/X$. Let us fix model parameters $\rho=2,\,p=-0.1,\,\mu=0.01,\,\sigma=0.02,\,\beta=2,\,\mu_Z=\sigma_Z=0.5,\,z=10,m=20$. As $\lambda$ approaches zero, the optimal portfolio share converges to the unconstrained case, aligning with the intuition. Notably, when $\lambda>0$ and the wealth level is relatively low, the investment share is actually higher than in the unconstrained case. This indicates that a stronger drawdown constraint may necessitate a more aggressive investment strategy to generate sufficient returns to sustain the consumption. 
At moderate wealth levels, the optimal portfolio share tends to be lower. In this ``comfortable" zone, the agent has enough wealth to meet consumption needs without frequent capital injection or severe drawdown violation. Thus, there is less incentive to take excessive risks. The strategy here focuses more on prudent growth, balancing the utility from consumption with the cost of potential capital injections. Excessive risk-taking could push the wealth into a zone where costly injections are imminent.
 When wealth is sufficiently high, the optimal portfolio share tends to stabilize and remain relatively constant. In this regime, the fund manager possesses a substantial buffer, significantly reducing the risk of breaching the benchmark tracking constraint or the consumption drawdown constraint. With ample capital, the focus shifts towards maximizing long-term growth. The fund manager can afford to take on a consistent level of market risk, as potential losses would have a comparatively minor impact on their overall financial well-being and their ability to meet future obligations. 
When $\beta$ increases, as the capital injection is more costly, the fund manager tends to avoid the possibilities of the capital injection. Finally, a larger $\mu$ generally would encourage the fund manager to allocate more wealth into the risky assets (higher $\theta^*/X$). 

\begin{figure}[h]
    \centering
    \begin{tabular}{c@{\extracolsep{\fill}}c@{\extracolsep{\fill}}c}
            \includegraphics[width=0.34\linewidth]{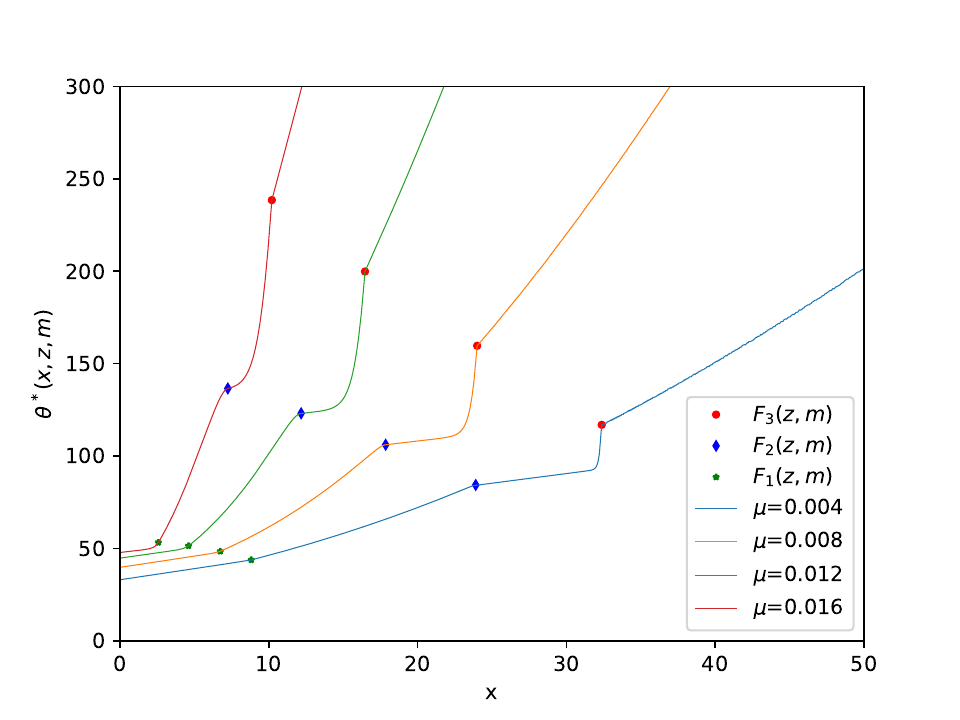} &
            \includegraphics[width=0.34\linewidth]{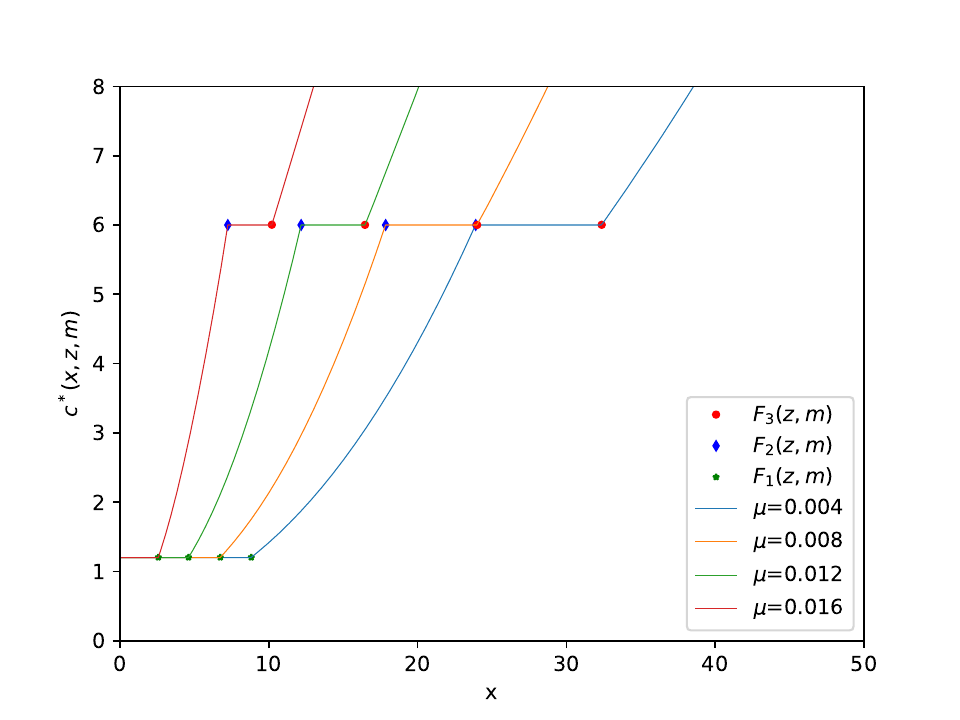} &
            \includegraphics[width=0.34\linewidth]{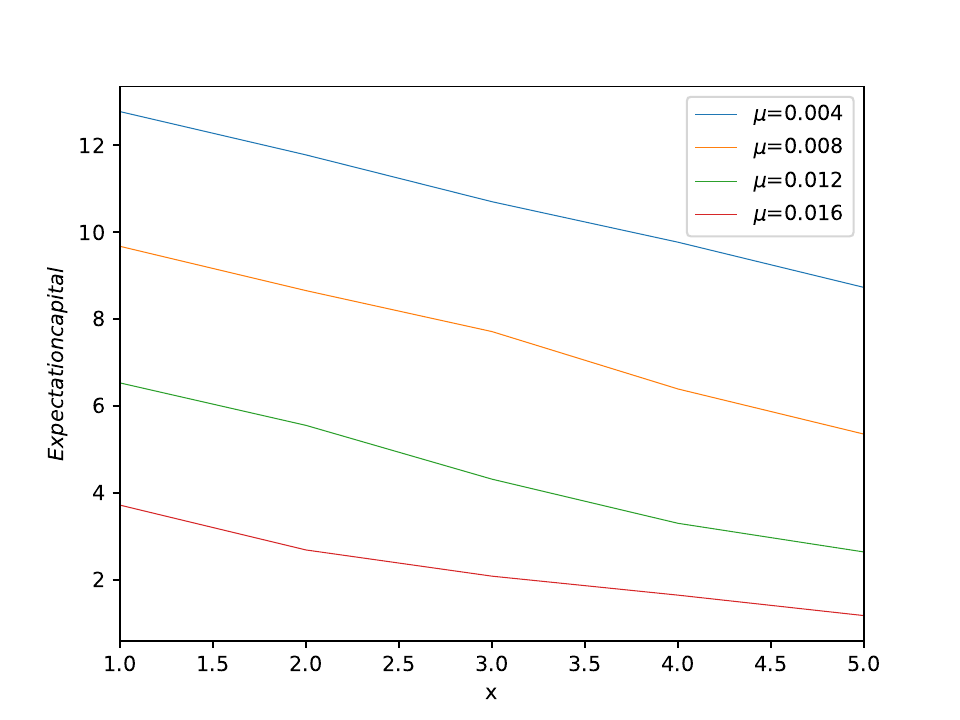}\\
            {\small (a) Optimal portfolio} & {\small (b) Optimal consumption} & {\small (c) Expected capital injection}\\
    \end{tabular}
    \caption{{\small Sensitivity results w.r.t. $\mu$.}}
    \label{fig:mu}
 \end{figure}

\begin{figure}[h]
    \centering
    \begin{tabular}{c@{\extracolsep{\fill}}c@{\extracolsep{\fill}}c}
            \includegraphics[width=0.34\linewidth]{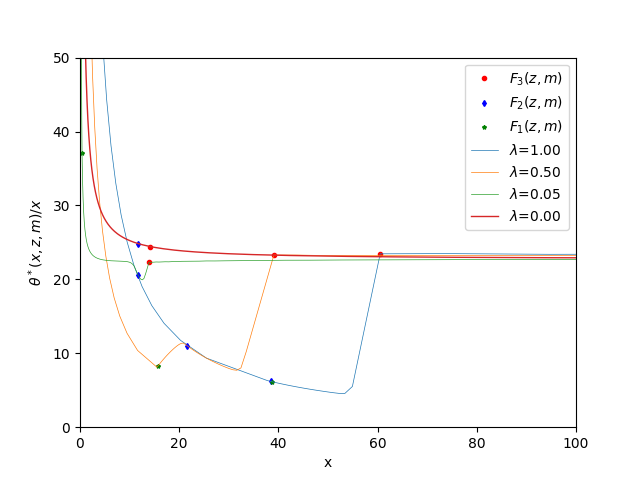} &
            \includegraphics[width=0.34\linewidth]{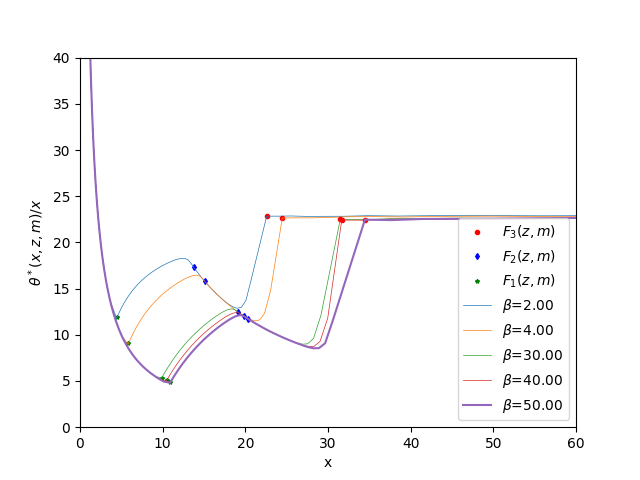}&
            \includegraphics[width=0.34\linewidth]{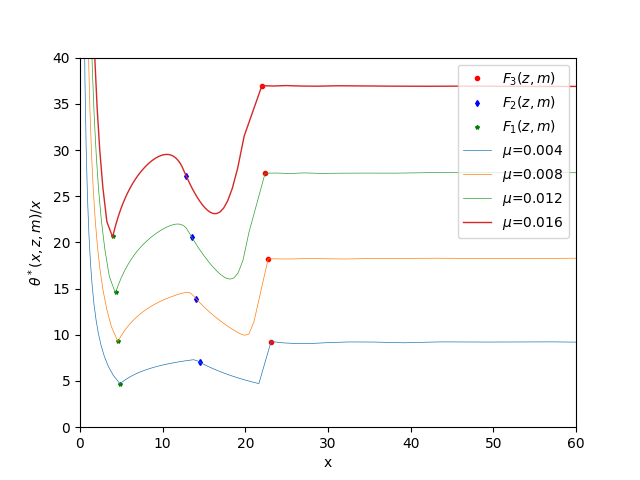} \\
            {\small(a) Sensitivity results w.r.t. $\lambda$} & {\small(b) Sensitivity results w.r.t. $\beta$} & {\small(c) Sensitivity results w.r.t.$\mu$}\\
    \end{tabular}
    \caption{{\small Sensitivity results of $\theta^*/x$.}}
    \label{fig:theta/x}
 \end{figure}

\section{Conclusions}\label{sec:conslusion}

In this paper, we study an optimal consumption problem with both benchmark tracking constraint and consumption drawdown constraint with fictitious capital injection, leading to a stochastic control problem with state-control constraints. By introducing an auxiliary reflected process, we transform it into a three-dimensional control problem with reflected dynamics. The main contributions are deriving the closed-form solution to the HJB-VI with mixed boundary conditions via domain decomposition and duality transform, and developing novel verification arguments using a constructed dual reflected process. Numerical results supplement the theory and  provide practical insights into how fund manager adjust her portfolio and consumption strategy when she concerns the benchmark process and the past maximum of consumption.  Future research could extend this framework in two directions. Firstly, it is interesting to generalize the current setting to general utility functions and benchmark processes and incomplete market models. Secondly, one may also consider singular controls in the current framework such as the proportional transaction cost for the portfolio management.

\section{Proofs}\label{sec:proofs}
 This section collects all proofs of main results in previous sections.
\begin{proof}[Proof of Proposition~\ref{prop:sol-v}]
We first prove item (i). For fixed $m\geq \frac{1}{\lambda} \beta^{\frac{1}{p-1}}$, let us define the mapping $y\mapsto F_m(y)$ that, for $y\in(0,m^{p-1}]$,
\begin{align}\label{eq:F} 
F_m(y):=\frac{\beta m^{p-1}}{(\alpha+\rho)y}+\frac{\beta}{\alpha+\rho}\ln\left(\frac{y}{\beta}\right)+\frac{\rho}{(\alpha+\rho)^2}\beta^{-\frac{\rho}{\alpha}} y^{\frac{\alpha+\rho}{\alpha}}-\frac{m^{p-1}}{\alpha+\rho} \beta^{-\frac{\rho}{\alpha}}y^{\frac{\rho}{\alpha}}.
\end{align}
Then, it holds that
\begin{align}\label{eq:F1}
F_m'(y)=\frac{\beta}{(\alpha+\rho)y^2}\left(1+\frac{\rho}{\alpha}\beta^{-\frac{\rho}{\alpha}}y^{\frac{\rho}{\alpha}}\right)(m^{p-1}-y)<0, \quad  \forall y\in(0,m^{p-1}),
\end{align}
which yields that $y\to F_m(y)$ is strictly decreasing on $(0, m^{1-p}]$. Consequently
\begin{align}
&\max_{y\in(0, m^{p-1}]}F_m(y)=\lim_{y\to 0}F_m(y)=+\infty,\quad
\min_{y\in(0, m^{p-1}]}F_m(y)=F_m( m^{p-1}).
\end{align}
Denote by $G(m)$ the term on the right side of Eq. \eqref{eq:r-1}. Then, it is sufficient to prove that 
$$G(m)\in [F_m(m^{p-1}),\infty),\quad \forall m\geq \frac{1}{\lambda} \beta^{\frac{1}{p-1}}.$$
Note that the following equivalence holds that
\begin{align}\label{eq:r1}
&F_m( m^{p-1})\leq G(m)\nonumber\\
&\quad\Longleftrightarrow\frac{\beta\lambda}{\alpha+\rho}\ln(\beta m^{1-p})+\frac{\alpha\beta^{-\frac{\rho}{\alpha}}}{(\alpha+\rho)^2}\lambda^{\frac{\alpha p-(1-p)\rho}{\alpha}}m^{-\frac{(\alpha+\rho)(1-p)}{\alpha}}+\frac{\beta(1-p)(\lambda-1)}{\alpha+\rho}+\frac{(1-p)\lambda \beta \ln(\lambda)}{\alpha+\rho}\nonumber\\
&\qquad\qquad -\frac{(1-\lambda) \beta}{\alpha+\rho}-\frac{\beta \alpha}{(\alpha+\rho)^2}+\frac{(1-p)^2\beta(\lambda-1)}{p(\alpha+\rho)}-\frac{\beta(\alpha+\rho+p\alpha)(\lambda-1)}{p(\alpha+\rho)^2}\geq 0.
\end{align}
We thus introduce the function $m\mapsto H(m)$ that
\begin{align*}
H(m):=\frac{\beta\lambda}{\alpha+\rho}\ln(\beta m^{1-p})+\frac{\alpha\beta^{-\frac{\rho}{\alpha}}}{(\alpha+\rho)^2}\lambda^{\frac{\alpha p-(1-p)\rho}{\alpha}}m^{-\frac{(\alpha+\rho)(1-p)}{\alpha}},\quad \forall m\geq\frac{1}{\lambda}\beta^{\frac{1}{p-1}}.
\end{align*}
Hence, a direct calculation yields that
\begin{align*}
H'(m)=\frac{\lambda(1-p)\beta}{m(\alpha+\rho)}\left(1-\beta^{-\frac{\rho}{\alpha}-1}(\lambda m)^{-(1-p)(\frac{\rho}{\alpha}+1)} \right)\geq 0,\quad\forall m\geq\frac{1}{\lambda}\beta^{\frac{1}{p-1}}.
\end{align*}
This implies that the mapping $m\mapsto H(m)$ is non-decreasing. As a result, we have
\begin{align}\label{eq:r2}
&\frac{\beta\lambda}{\alpha+\rho}\ln(\beta m^{1-p})+\frac{\alpha\beta^{-\frac{\rho}{\alpha}}}{(\alpha+\rho)^2}\lambda^{\frac{\alpha p-(1-p)\rho}{\alpha}}m^{-\frac{(\alpha+\rho)(1-p)}{\alpha}}+\frac{\beta(1-p)(\lambda-1)}{\alpha+\rho}+\frac{(1-p)\lambda \beta \ln(\lambda)}{\alpha+\rho}\nonumber\\
&\qquad\quad -\frac{(1-\lambda) \beta}{\alpha+\rho}-\frac{\beta \alpha}{(\alpha+\rho)^2}+\frac{(1-p)^2\beta(\lambda-1)}{p(\alpha+\rho)}-\frac{\beta(\alpha+\rho+p\alpha)(\lambda-1)}{p(\alpha+\rho)^2}\nonumber\\
&\qquad\geq \frac{(p-1)\beta\lambda \ln\lambda}{\alpha+\rho} +\frac{\lambda \beta \alpha}{(\alpha+\rho)^2}+\frac{\beta(1-p)(\lambda-1)}{\alpha+\rho}+\frac{(1-p)\lambda \beta \ln(\lambda)}{\alpha+\rho}\nonumber\\
&\qquad\quad  -\frac{(1-\lambda) \beta}{\alpha+\rho}-\frac{\beta \alpha}{(\alpha+\rho)^2}+\frac{(1-p)^2\beta(\lambda-1)}{p(\alpha+\rho)}-\frac{\beta(\alpha+\rho+p\alpha)(\lambda-1)}{p(\alpha+\rho)^2}=0.
\end{align}
It follows from \eqref{eq:r1} and \eqref{eq:r2} that the mapping $m\mapsto y^*(m)$ is well-defined.

To see that $m\to y^*(m)$ is strictly decreasing, we only discuss the case when $ m\geq  \frac{1}{\lambda}\beta^{\frac{1}{p-1}}$ as the proof of the case $\beta^{\frac{1}{p-1}}\leq m< \frac{1}{\lambda}\beta^{\frac{1}{p-1}}$ is similar. Taking the derivative w.r.t. the variable $m$ on both sides of Eq.~\eqref{eq:r-1}, we get that, for all $m>  \frac{1}{\lambda}\beta^{\frac{1}{p-1}}$,
 \begin{align}\label{eq:r-increasing-1}
&\frac{1}{y^*(m)}\frac{dy^*(m)}{dm}\left[-\frac{\beta m^{p-1}}{(\alpha+\rho)y^*(m)}+\frac{\beta}{\alpha+\rho}+\frac{\rho}{\alpha(\alpha+\rho)}\beta^{-\frac{\rho}{\alpha}} (y^*(m))^{\frac{\alpha+\rho}{\alpha}}-\frac{\rho m^{p-1}}{\alpha(\alpha+\rho)} \beta^{-\frac{\rho}{\alpha}}(y^*(m))^{\frac{\rho}{\alpha}}\right]\nonumber\\
&\quad\geq \frac{(1-p)m^{p-2}}{\alpha+\rho}\left(\beta m^{1-p}-\beta^{-\frac{\rho}{\alpha}}m^{-\frac{\rho(1-p)}{\alpha}}\right)+\frac{(\lambda-1)(1-p)\beta}{\alpha+\rho}\frac{1}{m}\\
&\qquad-\frac{1-p}{\alpha+\rho}\beta^{-\frac{\rho}{\alpha}}\left(\lambda^{\frac{\alpha p-\rho+\rho p}{\alpha}}-1\right)m^{-\frac{(\alpha+\rho)(1-p)}{\alpha}-1}=\frac{\lambda \beta(1-p)}{m(\alpha+\rho)}\left[1-\beta^{-\frac{\alpha+\rho}{\alpha}}(\lambda m)^{-\frac{(\alpha+\rho)(1-p)}{\alpha}}\right]> 0,\nonumber
\end{align}
where the first inequality follows from $y^*(m)\leq  m^{1-p}$, and the last inequality holds since $m>\beta^{\frac{1}{p-1}}/\lambda$. By using \eqref{eq:F} and \eqref{eq:F1}, one gets that, for all $m>  \frac{1}{\lambda}\beta^{\frac{1}{p-1}}$,
\begin{align}\label{eq:r-increasing-2}
-\frac{\beta m^{p-1}}{(\alpha+\rho)y^*(m)}+\frac{\beta}{\alpha+\rho}+\frac{\rho}{\alpha(\alpha+\rho)}\beta^{-\frac{\rho}{\alpha}} (y^*(m))^{\frac{\alpha+\rho}{\alpha}}-\frac{\rho m^{p-1}}{\alpha(\alpha+\rho)} \beta^{-\frac{\rho}{\alpha}}(y^*(m))^{\frac{\rho}{\alpha}}<0.
\end{align}
As a result, the estimates \eqref{eq:r-increasing-1} and \eqref{eq:r-increasing-2} yield that $\frac{dy^*(m)}{dm}<0$, and hence $m\mapsto y^*(m)$ is strictly decreasing. 

Next, we show that $\lim_{m\to\infty}y^*(m)=0$ by contradiction. Assume instead that $\lim_{m\to\infty}y^*(m)=C>0$. Sending $m\rightarrow\infty$ on both sides of Eq.~\eqref{eq:r-1}, we get that, the left hand side of Eq.~\eqref{eq:r-1} tends to 
\begin{align*}
\frac{\beta}{\alpha+\rho}\ln\left(\frac{C}{\beta}\right)+\frac{\rho}{(\alpha+\rho)^2}\beta^{-\frac{\rho}{\alpha}} C^{\frac{\alpha+\rho}{\alpha}}<+\infty.
\end{align*}
However, the right side of Eq.~\eqref{eq:r-1} goes to infinity, which yields a contradiction.

Next, we handle item (ii). Let us consider the candidate solution to Eq.~\eqref{HJB.3} satisfying the separated form that $\hat{v}(y,z,m)=l(y,m)+z \psi(y)$. We then get that the function $(y,m)\to l(y,m)$ satisfies the PDE with Neumann boundary and free boundary conditions that
\begin{align}\label{eq:l}
\begin{cases}
\displaystyle -\rho l(y,m)+\rho yl_y(y,m)+\alpha y^2l_{yy}(y,m)+\Phi(y)=0,\\[0.8em]
\displaystyle  l_y(\beta,m)=0,~ l_m(y^*(m),m)=l_{ym}(y^*(m),m)=0,
\end{cases}
\end{align}
and the function $y\mapsto\psi(y)$ solves the ODE that
\begin{align}\label{eq:h}
 (\mu_Z -\rho) \psi(y)+(\rho-\eta) y\psi_y(y)+\alpha y^2\psi_{yy}(y)-(\mu_Z-\eta)y=0
\end{align}
with the Neumann boundary condition $\psi'(\beta)=0$. 

By solving Eq. \eqref{eq:h}, we obtain $\psi(y)=y+K_1 y^{\kappa }+K_2 y^{\hat{\kappa} }$ with constants $K_1,K_2\in\R$ that will be determined later. In addition, denote $\kappa$ and $\hat{\kappa}$ as two  roots of the quadratic equation $\alpha \kappa^2+(\rho-\eta-\alpha)\kappa+\mu_Z-\rho=0$. We look for such a solution $y\mapsto\psi(y)$ with $K_2=0$ such that the Neumann boundary condition $\psi'(\beta)=0$ holds, which yields that $K_1=- \frac{\beta^{-\kappa+1}}{\kappa}$. Thus, we arrive at $\psi(y)=y-\frac{\beta^{-\kappa+1}}{\kappa}y^{\kappa}$.

Next, we solve Eq. \eqref{eq:l}. Here, we only consider the case with $ m\geq \frac{1}{\lambda}\beta^{\frac{1}{p-1}}$, as the proof of the case $\beta^{\frac{1}{p-1}}\leq m< \frac{1}{\lambda}\beta^{\frac{1}{p-1}}$ is similar. In fact, we have
\begin{align*}
l(y,m)=
\begin{cases}
 \displaystyle   \frac{1}{\beta}C_1(m)y+\beta^{\frac{\rho}{\alpha}} C_2(m)y^{-\frac{\rho}{\alpha}}+\frac{(\lambda m)^p}{p\rho}+\frac{\lambda m}{\alpha+\rho}y\ln\left(\frac{y}{\beta}\right),\qquad (\lambda m)^{p-1}<y\leq \beta,  \\[1em]
  \displaystyle    \frac{1}{\beta}C_3(m)y+\beta^{\frac{\rho}{\alpha}}C_4(m)y^{-\frac{\rho}{\alpha}}+\frac{(1-p)^3}{p(\rho(1-p)-\alpha p)}y^{\frac{p}{p-1}},\qquad  m^{p-1}<y\leq (\lambda m)^{p-1},\\[1em]
  \displaystyle    \frac{1}{\beta}C_5(m)y+\beta^{\frac{\rho}{\alpha}}C_6(m)y^{-\frac{\rho}{\alpha}}+\frac{m^p}{p\rho}+\frac{ m}{\alpha+\rho}y\ln\left(\frac{y}{\beta}\right),\qquad y^*(m)\leq  y\leq m^{p-1},
\end{cases}
\end{align*}
where the coefficient functions $m\mapsto C_i(m)$ for $i=1,...,6$ will be determined later. First of all, it follows from the {\it smooth-fit
condition} w.r.t. the variable $r$ along $y = (\lambda m)^{p-1}$ and $y= m^{1-p}$ that
\begin{align}
&\beta^{-1}(\lambda m)^{p-1} C_1(m)+\beta^{\frac{\rho}{\alpha}}(\lambda m)^{\frac{\rho(1-p)}{\alpha}}C_2(m)+\frac{(\lambda m)^p}{p\rho}-\frac{(\lambda m)^p}{\alpha+\rho}\ln(\beta(\lambda m)^{1-p})\nonumber\\
&\qquad\qquad=\beta^{-1}(\lambda m)^{p-1} C_3(m)+\beta^{\frac{\rho}{\alpha}}(\lambda m)^{\frac{\rho(1-p)}{\alpha}}C_4(m)+\frac{(1-p)^3}{p(\rho(1-p)-\alpha p)}(\lambda m)^p;\label{eq:C-1}\\
&-\beta^{-1}(\lambda m)^{p-1} C_1(m)+\frac{\rho}{\alpha}\beta^{\frac{\rho}{\alpha}}(\lambda m)^{\frac{\rho(1-p)}{\alpha}}C_2(m)+\frac{(\lambda m)^p}{\alpha+\rho}\ln(\beta(\lambda m)^{1-p})-\frac{(\lambda m)^p}{\alpha+\rho}\nonumber\\
&\qquad\qquad=-\beta^{-1}(\lambda m)^{p-1} C_3(m)+\frac{\rho}{\alpha}\beta^{\frac{\rho}{\alpha}}(\lambda m)^{\frac{\rho(1-p)}{\alpha}}C_4(m)++\frac{(1-p)^2}{\rho(1-p)-\alpha p}(\lambda m)^p;\label{eq:C-2}\\
&\beta^{-1} m^{p-1} C_3(m)+\beta^{\frac{\rho}{\alpha}}m^{\frac{\rho(1-p)}{\alpha}}C_4(m)+\frac{(1-p)^3}{p(\rho(1-p)-\alpha p)}m^p\nonumber\\
&\qquad\qquad=\beta^{-1} m^{p-1} C_5(m)+\beta^{\frac{\rho}{\alpha}}m^{\frac{\rho(1-p)}{\alpha}}C_6(m)+\frac{m^p}{p\rho}-\frac{m^p}{\alpha+\rho}\ln(\beta(\lambda m)^{1-p});\label{eq:C-3}\\
&-\beta^{-1} m^{p-1} C_3(m)+\frac{\rho}{\alpha}\beta^{\frac{\rho}{\alpha}} m^{\frac{\rho(1-p)}{\alpha}}C_4(m)+\frac{(1-p)^2}{\rho(1-p)-\alpha p}m^p\nonumber\\
&\qquad\qquad=-\beta^{-1} m^{p-1} C_5(m)+\frac{\rho}{\alpha}\beta^{\frac{\rho}{\alpha}} m^{\frac{\rho(1-p)}{\alpha}}C_6(m)+\frac{m^p}{\alpha+\rho}\ln(\beta m^{1-p})-\frac{m^p}{\alpha+\rho}.\label{eq:C-4}
\end{align}
Moreover, using the Neumann boundary condition $l_y(\beta,m)=0$ and free boundary conditions $l_m(y^*(m),m)=0,~l_{ym}(y^*(m),m)=0$, we arrive at
\begin{align}
-C_1(m)+\frac{\rho}{\alpha}C_2(m)-\frac{\lambda m\beta}{\alpha+\rho}=0,\label{eq:C-5}\\
\frac{1}{\beta}C_5'(m)y^*(m)+C_6'(m)\beta^{\frac{\rho}{\alpha}} (y^*(m))^{-\frac{\rho}{\alpha}}+\frac{m^p}{p\rho}+\frac{ m}{\alpha+\rho}y^*(m)\ln\left(\frac{y^*(m)}{\beta}\right)=0,\label{eq:C-6}\\
-\frac{1}{\beta}C_5'(m)y^*(m)+\frac{\rho}{\alpha}C_6'(m)\beta^{\frac{\rho}{\alpha}} (y^*(m))^{-\frac{\rho}{\alpha}}-\frac{ m}{\alpha+\rho}y^*(m)\ln\left(\frac{y^*(m)}{\beta}\right)-\frac{ m}{\alpha+\rho}y^*(m)=0.\label{eq:C-7}
\end{align} 
By using \eqref{eq:C-1}-\eqref{eq:C-7} and $\lim_{m\to\infty} C_6(m)=0$, we have that $C_6(m)$ can be expressed in terms of $y^*(m)$; and $C_1(m)-C_5(m)$ can be expressed in terms of $C_6(m)$ as follows:
{\small
\begin{align}
C_1(m)&=\frac{\alpha^2\beta^{-\frac{\rho}{\alpha}}}{(\alpha+\rho)^2(\rho(1-p)-\alpha p)}\left(\lambda^{\frac{\alpha p-(1-p)\rho}{\alpha}}-1\right)m^{\frac{\alpha p-(1-p)\rho}{\alpha}}-\frac{\lambda \beta m}{\alpha+\rho}+\frac{\rho}{\alpha}C_6(m),\label{eq:C1}\\
C_2(m)&=\frac{\alpha^3\beta^{-\frac{\rho}{\alpha}}}{\rho(\alpha+\rho)^2(\rho(1-p)-\alpha p)}\left(\lambda^{\frac{\alpha p-(1-p)\rho}{\alpha}}-1\right)m^{\frac{\alpha p-(1-p)\rho}{\alpha}}+C_6(m),\label{eq:C2}\\[0.6em]
C_3(m)&=
\begin{cases}
\displaystyle \frac{\alpha^2\beta^{-\frac{\rho}{\alpha}}}{(\alpha+\rho)^2(\rho(1-p)-\alpha p)}\left(\lambda^{\frac{\alpha p-(1-p)\rho}{\alpha}}-1\right)m^{\frac{\alpha p-(1-p)\rho}{\alpha}}+\frac{\rho}{\alpha}C_6(m)\\[1.2em]
\displaystyle \quad
+\frac{\beta m}{\alpha+\rho}\left[\frac{(\alpha+\rho-p\rho-(1-p)^2(\alpha+\rho))\lambda }{p(\alpha+\rho)}-{\lambda\ln\beta(\lambda m)^{1-p}}\right],\quad m\geq \frac{1}{\lambda}\beta^{\frac{1}{p-1}},\\[1.2em]
\displaystyle -\frac{\alpha^2\beta^{-\frac{\rho}{\alpha}}}{(\alpha+\rho)^2(\rho(1-p)-\alpha p)}m^{\frac{\alpha p-(1-p)\rho}{\alpha}}+\frac{(1-p)^2\beta^{\frac{p}{p-1}}}{\rho(1-p)-\alpha p}+\frac{\rho}{\alpha}C_6(m),\quad \beta^{\frac{1}{p-1}}\leq m< \frac{1}{\lambda}\beta^{\frac{1}{p-1}},
\end{cases}\label{eq:C3}\\[0.6em]
C_4(m)&= -\frac{\alpha^3\beta^{-\frac{\rho}{\alpha}}}{\rho(\alpha+\rho)^2(\rho(1-p)-\alpha p)}m^{\frac{\alpha p-(1-p)\rho}{\alpha}}+C_6(m),\label{eq:C4}\\[1.2em]
C_5(m)&=
\begin{cases}
\displaystyle
\frac{\beta m}{\alpha+\rho}\left[
-1+(1-\lambda)\left(
\frac{(1-p)^2(\alpha+\rho)-\alpha-\rho+p\rho}{p(\alpha+\rho)}
\right)-\lambda\ln(\beta(\lambda m)^{1-p})+\ln(\beta m^{1-p})\right]\\[0.8em]
\displaystyle \quad+\frac{\alpha^2\beta^{-\frac{\rho}{\alpha}}}{(\alpha+\rho)^2(\rho(1-p)-\alpha p)}\left(\lambda^{\frac{\alpha p-(1-p)\rho}{\alpha}}-1\right)m^{\frac{\alpha p-(1-p)\rho}{\alpha}}+\frac{\rho}{\alpha}C_6(m),\quad m\geq \frac{1}{\lambda}\beta^{\frac{1}{p-1}},\\[0.8em]
\displaystyle -\frac{\alpha^2\beta^{-\frac{\rho}{\alpha}}}{(\alpha+\rho)^2(\rho(1-p)-\alpha p)}m^{\frac{\alpha p-(1-p)\rho}{\alpha}}+\frac{(1-p)^2\beta^{\frac{p}{p-1}}}{\rho(1-p)-\alpha p}+\frac{\rho}{\alpha}C_6(m)\\[1.2em]
\displaystyle \quad+\frac{\beta m}{\alpha+\rho}\left[\frac{(1-p)^2}{p}-\frac{\alpha p+\alpha+\rho}{p(\alpha+\rho)}+\ln(\beta m^{1-p})\right],\quad \beta^{\frac{1}{p-1}}\leq m< \frac{1}{\lambda}\beta^{\frac{1}{p-1}},
\end{cases}
\label{eq:C5}\\[1.2em]
C_6(m)&=\int_m^{\infty}\left(\frac{\alpha}{\rho(\alpha+\rho)}\ell^{p-1}\beta^{-\frac{\rho}{\alpha}} (y^*(\ell))^{\frac{\rho}{\alpha}}-\frac{\alpha\beta}{(\alpha+\rho)^2}\beta^{\frac{\alpha+\rho}{\alpha}} (y^*(\ell))^{-\frac{\alpha+\rho}{\alpha}}\right)d\ell.\label{eq:C6}
\end{align}}


Finally, it remains to show that $m\mapsto C_6(m)$ is well-defined in the sense that $|C_6(m)|<\infty$ for any $m\geq \beta^{\frac{1}{p-1}}$. By using $y^*(m)\in(0,m^{p-1}]$ and Assumption $\rho>\rho_0$, we have
\begin{align}\label{eq:C6-1}
C_6(m)&\leq \int_m^{\infty}\frac{\alpha}{\rho(\alpha+\rho)}\ell^{p-1}\beta^{-\frac{\rho}{\alpha}} (y^*(\ell))^{\frac{\rho}{\alpha}}d\ell\nonumber\\
&\leq \int_m^{\infty}\frac{\alpha}{\rho(\alpha+\rho)}\beta^{-\frac{\rho}{\alpha}} \ell^{-\frac{(\rho+\alpha)(1-p)}{\alpha}}d\ell=\frac{\beta^{-\frac{\rho}{\alpha}}m^{-\frac{\rho(1-p)-p\alpha}{\alpha}}}{\rho(\alpha+\rho)(\rho(1-p)-p\alpha)}.
\end{align}
Using the facts $\frac{d y^*(m)}{d m}<0$ and $y^*(m)\leq m^{p-1}$, we obtain, for all $m\geq \beta^{\frac{1}{p-1}}$,
\begin{align*}
&\frac{d}{dm}\left[\frac{\alpha}{\rho(\alpha+\rho)}m^{p-1}\beta^{-\frac{\rho}{\alpha}} (y^*(m))^{\frac{\rho}{\alpha}}-\frac{\alpha\beta}{(\alpha+\rho)^2}\beta^{\frac{\alpha+\rho}{\alpha}} (y^*(m))^{-\frac{\alpha+\rho}{\alpha}}\right]\leq0,
\end{align*}
which, together with the fact that $\lim_{m\rightarrow\infty}y^*(m)=0$, implies that, for all $m\geq \beta^{\frac{1}{p-1}}$,
\begin{align*}
&\frac{\alpha}{\rho(\alpha+\rho)}m^{p-1}\beta^{-\frac{\rho}{\alpha}} (y^*(m))^{\frac{\rho}{\alpha}}-\frac{\alpha\beta}{(\alpha+\rho)^2}\beta^{\frac{\alpha+\rho}{\alpha}} (y^*(m))^{-\frac{\alpha+\rho}{\alpha}}
\nonumber\\
&\qquad\geq
\lim_{m\rightarrow\infty}\left[\frac{\alpha}{\rho(\alpha+\rho)}m^{p-1}\beta^{-\frac{\rho}{\alpha}} (y^*(m))^{\frac{\rho}{\alpha}}-\frac{\alpha\beta}{(\alpha+\rho)^2}\beta^{\frac{\alpha+\rho}{\alpha}} (y^*(m))^{-\frac{\alpha+\rho}{\alpha}}\right]=0.
\end{align*}
Thus, we have
\begin{align}\label{eq:C6-2}
C_6(m)&=\int_m^{\infty}\left(\frac{\alpha}{\rho(\alpha+\rho)}\ell^{p-1}\beta^{-\frac{\rho}{\alpha}} (y^*(\ell))^{\frac{\rho}{\alpha}}-\frac{\alpha\beta}{(\alpha+\rho)^2}\beta^{\frac{\alpha+\rho}{\alpha}} (y^*(\ell))^{-\frac{\alpha+\rho}{\alpha}}\right)d\ell>0.
\end{align}
From \eqref{eq:C6-1} and \eqref{eq:C6-2}, it follows that $m\mapsto C_6(m)$ is well-defined, which completes the proof.
\end{proof}



\begin{proof}[Proof Lemam~\ref{lem:dual-sol}]
Note that $\hat{v}_y(\beta,z,m)=0$ for all $(z,m)\in\R_+^2$, it suffices to show $\hat{v}_{yy}(y,z,m)>0$ for all $(y,z,m)\in [y^*(m),\beta]\times \R_+^2$. 

(i) The case $y^*(m)\leq  y\leq m^{p-1}$: In this case, it follows from Proposition \ref{prop:sol-v} that
\begin{align*}
\hat{v}_{yy}(y,z,m)=  \frac{\rho(\alpha+\rho)}{\alpha^2}\beta^{\frac{\rho}{\alpha}}C_6(m)y^{-\frac{\rho+2\alpha}{\alpha}}+\frac{ m}{(\alpha+\rho)y}+\beta^{-\kappa+1}(1-\kappa)zy^{\kappa-2}.
\end{align*}
As $\mu_Z\geq \eta$ and $\rho > \mu_Z$, we have $\kappa\in (0,1]$. Together with $C_6(m)>0$ by \eqref{eq:C6-2},  we have $\hat{v}_{yy}(y,z,m)>0$.

 (ii) The case $ m^{p-1}<y\leq (\lambda m)^{p-1}$:  It follows from Proposition \ref{prop:sol-v} that
\begin{align*}
\hat{v}_{yy}(y,z,m)&=  \frac{\rho(\alpha+\rho)}{\alpha^2}\beta^{\frac{\rho}{\alpha}}C_4(m)y^{-\frac{\rho+2\alpha}{\alpha}}+\frac{1-p}{\rho(1-p)-\alpha p}y^{\frac{2-p}{p-1}}+\beta^{-\kappa+1}(1-\kappa)zy^{\kappa-2}\nonumber\\
&\geq \frac{\rho(\rho+\alpha)}{\alpha^2}\beta^{\frac{\rho}{\alpha}}C_6(m)y^{-\frac{\rho+2\alpha}{\alpha}}+\frac{y^2}{\rho(1-p)-\alpha p}\left((1-p)y^{\frac{p}{p-1}}-\frac{\alpha}{\alpha+\rho}y^{\frac{\rho}{\alpha}}m^{\frac{\alpha p-(1-p)\rho}{\alpha}}\right)\nonumber\\
&=\frac{\rho(\rho+\alpha)}{\alpha^2}C_6(m)e^{\frac{\rho}{\alpha}r}+\frac{m^p y^2}{\alpha+\rho}>0.
\end{align*}

(iii) The case $(\lambda m)^{p-1}<y\leq \beta$: Note that the following estimate holds true:
 \begin{align*}
C_2(m)&=\frac{\alpha^3\beta^{-\frac{\rho}{\alpha}}}{\rho(\alpha+\rho)^2(\rho(1-p)-\alpha p)}\left(\lambda^{\frac{\alpha p-(1-p)\rho}{\alpha}}-1\right)m^{\frac{\alpha p-(1-p)\rho}{\alpha}}+C_6(m)>0.
\end{align*}
We deduce from Proposition \ref{prop:sol-v} that
\begin{align*}
   \hat{v}_{yy}(y,z,m)&=   \frac{\rho(\alpha+\rho)}{\alpha^2}\beta^{\frac{\rho}{\alpha}}C_2(m)y^{-\frac{\rho+2\alpha}{\alpha}}+\frac{\lambda m}{(\alpha+\rho)y}+\beta^{-\kappa+1}(1-\kappa)zy^{\kappa-2}>0.
\end{align*}
Putting all the pieces together, we get the desired result.
\end{proof}

\begin{proof}[Proof of Lemma \ref{lem:boundary-m}]
Note that, for all $(z,m)\in\R_+\times [\beta^{\frac{1}{p-1}},\infty)$, it holds that
\begin{align}\label{eq:F3-1}
\frac{\partial F_3(z,m)  }{\partial m}=-\hat{v}_{yy}(y^*(m),z,m)\frac{d y^*(m)}{dm}-\hat{v}_{ym}(y^*(m),z,m).
\end{align}
It follows from Proposition~\ref{prop:sol-v} and Lemma \ref{lem:dual-sol} that, for all $(z,m)\in\R_+\times [\beta^{\frac{1}{p-1}},\infty)$,
\begin{align}\label{eq:F3-2}
\hat{v}_{yy}(y^*(m),z,m)>0, \quad \frac{d y^*(m)}{dm}<0, \quad \hat{v}_{ym}(y^*(m),z,m)=0.
\end{align}
Then, we deduce from \eqref{eq:F3-1} and \eqref{eq:F3-2} that 
$\frac{\partial F_3(z,m)  }{\partial m}=-\hat{v}_{yy}(y^*(m),z,m)\frac{d y^*(m)}{dm}>0$, which yields that $m\to F_3(z,m)$ is strictly increasing. Thus, $x\mapsto m^*(x,z)$ as the inverse function of $m\mapsto F_3(z,m)$ is well-defined, and is strictly increasing in its first augment.
Note that, when $(y^*(m))^{\frac{1}{p-1}}\geq m\geq \frac{1}{\lambda}\beta^{\frac{1}{p-1}}$, we obtain 
\begin{align}\label{eq:F3-3}
x&=-\frac{1}{\beta}C_5(m)+\frac{\rho}{\alpha}\beta^{\frac{\rho}{\alpha}}C_6(m)(y^*(m))^{-\frac{\alpha+\rho}{\alpha}}-\frac{m}{\alpha+\rho}(\ln\beta^{-1}y^*(m)+1)-z(1-\beta^{1-\kappa}(y^*(m))^{\kappa-1})\nonumber\\
&\geq\frac{\rho}{\alpha}C_6(m)\beta^{\frac{\rho}{\alpha}}\left((y^*(m))^{-\frac{\alpha+\rho}{\alpha}}-\beta^{-\frac{\alpha+\rho}{\alpha}}\right)+(1-\lambda)
\frac{(1-p)\rho+(2-p)\alpha}{(\alpha+\rho)^2}m\nonumber\\
&\quad-\frac{\alpha^2\beta^{-\frac{\rho+\alpha}{\alpha}}}{(\alpha+\rho)^2(\rho(1-p)-\alpha p)}\left(\lambda^{\frac{\alpha p-(1-p)\rho}{\alpha}}-1\right)m^{\frac{\alpha p-(1-p)\rho}{\alpha}}+\frac{ m}{\alpha+\rho}\lambda\ln(\beta(\lambda m)^{1-p})\nonumber\\
&\geq \frac{ \lambda}{\alpha+\rho}m\ln(\beta m^{1-p})-\frac{\alpha^2\beta^{\frac{1}{p-1}}}{(\alpha+\rho)^2(\rho(1-p)-\alpha p)}.
\end{align}
In addition, we also have that, if $m^*(x,z)\leq \frac{1}{\lambda}\beta^{\frac{1}{p-1}}$, then
\begin{align}\label{eq:F3-4}
m^*(x,z)\ln(\beta (m^*(x,z))^{1-p})\leq -\frac{(1-p)\ln\lambda}{\lambda}\beta^{\frac{1}{p-1}}.
\end{align}
Hence, from \eqref{eq:F3-3}, \eqref{eq:F3-4} and the fact that $x\mapsto m^*(x,z)$ is strictly increasing.
\end{proof}

\begin{proof}[Proof of Lemma \ref{lem:valuefunc-v}]
It follows from Lemma \ref{lem:dual-sol} and Lemma \ref{lem:boundary-m} that $y\mapsto\hat{v}(y,z,m)$ is strictly convex and decreasing that satisfies $\hat{v}_y(\beta,z,m)=0$, $\hat{v}_y(y^*(m),z,m)=-F_3(z,m)$ and $x=F_3(z,m^*(x,z))$. Hence, the function $x\mapsto v(x,z,m)$ defined by \eqref{v.func.def1}-\eqref{v.func.def2} and $x\mapsto f(x,z,m)$ as the inverse function of $-\hat{v}_y(\cdot,z,m)$ are well-defined. Furthermore, by using Proposition \ref{prop:sol-v}, a direct calculation yields that $v(x,z,m)$ solves Eq. \eqref{eq:equation-v1} with the Neumann boundary condition on ${\cal D}$. On the other hand, for $(x,z,m)\in \R^3_+\backslash\mathcal{D}$, we have from \eqref{v.func.def2} that
\begin{align}\label{eq:v-xmz}
\begin{cases}
    v_m(x,z,m)=v_m(x,z,m^*(x,z))=0,\\
    v_x(x,z,m)=v_x(x,z,m^*(x,z))+v_{m}(x,z,m^*(x,z))m_x^*(x,z)=v_x(x,z,m^*(x,z))\\
   v_{xx}(x,z,m)=v_{xx}(x,z,m^*(x,z))+v_{xm}(x,z,m^*(x,z))m_x^*(x,z)=v_{xx}(x,z,m^*(x,z)).
   \end{cases}
\end{align}
Then, in a similar fashion, we also have $v_{z}(x,z,m)=v_z(x,z,m^*(x,z))$, $v_{zz}(x,z,m)=v_{zz}(x,z,m^*(x,z))$ and $v_{xz}(x,z,m)=v_{xz}(x,z,m^*(x,z))$. As $(x,z,m^*(x,z))\in {\cal D}$, thanks to \eqref{eq:equation-v1} and \eqref{eq:v-xmz}, we can conclude the desired result.
\end{proof}

\begin{proof}[Proof of Lemma \ref{lem:feedback-control}]
It follows from \eqref{consum.def} and Lemma \ref{lem:boundary-m} that $|c^*(x,z,m)|\leq M_c(1+x+m)$ for some positive constant $M_c$. In lieu of the duality representation, we have $x=-\hat{v}_y(f(x,z,m),z,m)$ and 
\begin{align}\label{theta.lip}
&|\theta^*(x,z,m)|\leq
|(\sigma\sigma^T)^{-1}\mu|\left|\frac{v_x(x,z,m)}{v_{xx}(x,z,m)}\right|+|\sigma_Z\gamma^T\sigma^{-1}|\left|\frac{zv_{xz}(x,z,m)}{v_{xx}(x,z,m)}\right|+|\sigma_Z\gamma^T\sigma^{-1}|z\\
&=
|(\sigma\sigma^T)^{-1}\mu|f(x,z,m)\hat{v}_{yy}(f(x,z,m),z,m)+|\sigma_Z\gamma^T\sigma^{-1}|\left|z\hat{v}_{yz}(f(x,z,m),z,m)\right|+|\sigma_Z\gamma^T\sigma^{-1}|z.\nonumber
\end{align}
For $y^*(m)\leq y\leq m^{p-1}$, $z>0$ and $m\geq \beta^{\frac{1}{p-1}}$, we have
\begin{align*}
-\frac{|z\hat{v}_{yz}(y,z,m)|}{\hat{v}_y(y,z,m)}
=
\frac{z(\beta^{1-\kappa}y^{\kappa-1}-1)}{-\frac{1}{\beta}C_5(m)+\frac{\rho}{\alpha}C_6(m)y^{-\frac{\alpha+\rho}{\alpha}}-\frac{m}{\alpha+\rho}\ln\frac{y}{\beta}-\frac{m}{\alpha+\rho}+z(\beta^{1-\kappa}y^{\kappa-1}-1)}\leq1,
\end{align*}
which results in $|z\hat{v}_{yz}(f(x,z,m),z,m)|\leq x$.
For $y^*(m)\leq y\leq m^{p-1}$ and $z>0$, we have
\begin{align}\label{yvyy.condi1}
-\frac{y\hat{v}_{yy}(y,z,m)}{\hat{v}_y(y,z,m)}
&\leq
\frac{\frac{(\alpha+\rho)\rho}{\alpha^2}\beta^{\frac{\rho}{\alpha}}C_6(m)y^{-\frac{\alpha+\rho}{\alpha}}+\frac{m}{\alpha+\rho}}{-\frac{1}{\beta}C_5(m)+\frac{\rho}{\alpha}C_6(m)y^{-\frac{\alpha+\rho}{\alpha}}-\frac{m}{\alpha+\rho}\ln\frac{y}{\beta}-\frac{m}{\alpha+\rho}}+\frac{z\beta^{1-\kappa}(1-\kappa)y^{\kappa-1}}{z(\beta^{1-\kappa}y^{\kappa-1}-1)}
\nonumber\\
&\leq
\frac{\frac{(\alpha+\rho)\rho}{\alpha^2}\beta^{\frac{\rho}{\alpha}}\frac{C_6(m)}{m}(y^*(m))^{-\frac{\alpha+\rho}{\alpha}}+\frac{1}{\alpha+\rho}}{-\frac{1}{\beta}\frac{C_5(m)}{m}+\frac{\rho}{\alpha}\frac{C_6(m)}{m}(m^{p-1})^{-\frac{\alpha+\rho}{\alpha}}-\frac{1}{\alpha+\rho}\ln\frac{m^{p-1}}{\beta}-\frac{1}{\alpha+\rho}}+(1-\kappa).
\end{align}
Note that, it holds that 
\begin{align}\label{yhatvyy.ineq}
& \frac{(\alpha+\rho)\rho}{\alpha^2}\beta^{\frac{\rho}{\alpha}}\frac{C_6(m)}{m}(y^*(m))^{-\frac{\alpha+\rho}{\alpha}}\nonumber\\
&\qquad=
\frac{1}{m}e^{\frac{\alpha+\rho}{\alpha}r^*(m)}\int_m^{\infty}\left(\frac{1}{\alpha\beta}u^{p-1}e^{-\frac{\rho}{\alpha}r^*(u)}-\frac{\rho}{\alpha(\alpha+\rho)}e^{-\frac{\alpha+\rho}{\alpha}r^*(u)}\right)du,
\end{align}
where $r^*(m)=\ln(\beta/y^*(m))$ for $m\geq \frac{1}{\lambda}\beta^{\frac{1}{p-1}}$ and $y^*(m)$ is the unique solution to Eq.~\eqref{eq:r-1}. Dividing by $r^*(m)$ and letting $m\rightarrow\infty$ on both sides of \eqref{eq:r-1}, we have
\begin{align}\label{eq.lim.r*}
&\lim_{m\rightarrow\infty}\frac{e^{r^*(m)-\ln m^{1-p}}+\ln m^{(1-p)\beta(1-\lambda)}}{r^*(m)}=
\lim_{m\rightarrow\infty}\frac{e^{\Tilde{r}(m)}+\ln m^{(1-p)\beta(1-\lambda)}}{\Tilde{r}(m)+\ln m^{1-p}}
\nonumber\\
&\qquad\qquad\qquad\qquad=\lim_{m\rightarrow\infty}\frac{1+e^{-\Tilde{r}(m)+\ln\ln m^{(1-p)\beta(1-\lambda)}}}{e^{-\Tilde{r}(m)}\Tilde{r}(m)+e^{-\Tilde{r}(m)+\ln\ln m^{1-p}}}
=\beta
\end{align}
with $\Tilde{r}(m):=r^*(m)-\ln m^{1-p}$ for $m\geq \frac{1}{\lambda}\beta^{\frac{1}{p-1}}$. It is easy to verify that $\lim_{m\rightarrow\infty}e^{-\Tilde{r}(m)+\ln\ln m^{1-p}}<\infty$ and $\lim_{m\rightarrow\infty}e^{-\Tilde{r}(m)+\ln\ln m^{(1-p)\beta(1-\lambda)}}<\infty$, which together with \eqref{eq.lim.r*} yields that 
\begin{align}\label{eq:lim-rstart}
 \lim_{m\rightarrow\infty}\left(r^*(m)-\ln m^{1-p}-\ln\ln m^{\beta(1-p)\lambda}\right)=0.   
\end{align}
Hence, one can find a constant $M_0\geq \frac{1}{\lambda}\beta^{\frac{1}{p-1}}$ such that, for $m\geq M_0$,
\begin{align*}
\ln m^{1-p}+\ln\ln m^{\beta(1-p)\lambda}-\frac{\alpha}{4\rho}\leq r^*(m)\leq \ln m^{1-p}+\ln\ln m^{\beta(1-p)\lambda}+\frac{\alpha}{4\rho}.    
\end{align*}
Then, it follows from \eqref{yhatvyy.ineq}  and $\rho>\rho_0$ that 
\begin{align}\label{yvyy.nume}
&\frac{(\alpha+\rho)\rho}{\alpha^2}\beta^{\frac{\rho}{\alpha}}\frac{C_6(m)}{m}(y^*(m))^{-\frac{\alpha+\rho}{\alpha}}+\frac{1}{\alpha+\rho}\leq
\frac{1}{\alpha+\rho}+\frac{1}{m}e^{\frac{\alpha+\rho}{\alpha}\left(\ln m^{1-p}+\ln\ln m^{\beta(1-p)\lambda}+\frac{\alpha}{4\rho}\right)}\nonumber\\
&\quad
\times\int_m^{\infty}\left(\frac{1}{\alpha\beta}u^{p-1}e^{-\frac{\rho}{\alpha}\left(\ln u^{1-p}+\ln\ln u^{\beta(1-p)\lambda}-\frac{\alpha}{4\rho}\right)}-\frac{\rho}{\alpha(\alpha+\rho)}e^{-\frac{\alpha+\rho}{\alpha}\left(\ln u^{1-p}+\ln\ln u^{\beta(1-p)\lambda}+\frac{\alpha}{4\rho}\right)}\right)du
\nonumber\\
&\leq
m^{\frac{(\alpha+\rho)(1-p)}{\alpha}-1}\int_m^{\infty}\left(\frac{(1-p)\lambda}{\alpha}e^{\frac{\alpha}{4\rho}+\frac{1}{2}}u^{-\frac{(\alpha+\rho)(1-p)}{\alpha}}\ln u-\frac{\rho }{\alpha(\alpha+\rho)}u^{-\frac{(\alpha+\rho)(1-p)}{\alpha}}\right)du+\frac{1}{\alpha+\rho}.
\end{align}
On the other hand, for $m\geq M_0$, we have 
\begin{align}\label{yvyy.deno}
&-\frac{1}{\beta}\frac{C_5(m)}{m}+\frac{\rho}{\alpha}\frac{C_6(m)}{m}(m^{p-1})^{-\frac{\alpha+\rho}{\alpha}}-\frac{1}{\alpha+\rho}\ln\frac{m^{p-1}}{\beta}-\frac{1}{\alpha+\rho}\nonumber\\
&\qquad=
(1-\lambda)\left(\frac{1-p}{\alpha+\rho}+\frac{\alpha}{(\alpha+\rho)^2}\right)+\frac{\lambda\ln\beta (\lambda m)^{1-p}}{\alpha+\rho}-\frac{\rho }{\alpha\beta }\frac{C_6(m)}{m}\\
&\quad\qquad-\frac{\alpha^2\beta^{-\frac{\alpha+\rho}{\alpha}}}{(\alpha+\rho)^2(\rho(1-p)-\alpha p)}\left(\lambda^{\frac{\alpha p-(1-p)\rho}{\alpha}}-1\right)m^{-\frac{(\alpha+\rho)(1-p)}{\alpha}}+\frac{\rho }{\alpha}C_6(m)m^{\frac{(\alpha+\rho)(1-p)}{\alpha}-1}.\nonumber
\end{align}
In view of the fact that $\lim_{m\rightarrow\infty}\lambda\ln\beta (\lambda m)^{1-p}=\infty$ and that
{\small\begin{align*}
&\lim_{m\rightarrow\infty}\Bigg[C_6(m)m^{\frac{(\alpha+\rho)(1-p)}{\alpha}-1}-\frac{\rho}{\alpha\beta}\frac{C_6(m)}{m}-\frac{\alpha^2\beta^{-\frac{\alpha+\rho}{\alpha}}}{(\alpha+\rho)^2(\rho(1-p)-\alpha p)}\left(\lambda^{\frac{\alpha p-(1-p)\rho}{\alpha}}-1\right)m^{-\frac{(\alpha+\rho)(1-p)}{\alpha}}\Bigg]=0,   
\end{align*}}there exists a constant $M_1\geq M_0\vee 1$ such that, for all $m\geq M_1$,
\begin{align*}
&
\frac{\lambda \ln\beta(\lambda m)^{1-p}}{\alpha+\rho}+C_6(m)m^{\frac{(\alpha+\rho)(1-p)}{\alpha}-1}-\frac{\rho}{\alpha\beta}\frac{C_6(m)}{m}
\nonumber\\
&\qquad
-\frac{\alpha^2\beta^{-\frac{\alpha+\rho}{\alpha}}}{(\alpha+\rho)^2(\rho(1-p)-\alpha p)}\left(\lambda^{\frac{\alpha p-(1-p)\rho}{\alpha}}-1\right)m^{-\frac{(\alpha+\rho)(1-p)}{\alpha}}\geq 
\frac{\alpha+\rho}{(\rho(1-p)-\alpha p)^2}.
\end{align*}
This, combining with \eqref{yvyy.deno}, implies that, for all $m\geq M_1$,
\begin{align}\label{yvyy.deno.2}
&-\frac{1}{\beta}\frac{C_5(m)}{m}+\frac{\rho}{\alpha}\frac{C_6(m)}{m}(m^{p-1})^{-\frac{\alpha+\rho}{\alpha}}-\frac{1}{\alpha+\rho}\ln\frac{m^{p-1}}{\beta}-\frac{1}{\alpha+\rho}\nonumber\\
&\qquad\qquad\geq
(1-\lambda)\left[\frac{1-p}{\alpha+\rho}+\frac{\alpha}{(\alpha+\rho)^2}\right]+
\frac{\alpha+\rho}{(\rho(1-p)-\alpha p)^2}.
\end{align}
Using \eqref{yvyy.condi1}, \eqref{yvyy.nume} and \eqref{yvyy.deno.2}, one can deduce that, for all $m\geq M_1$,
\begin{align}\label{theta.lip.condi1}
-\frac{y\hat{v}_{yy}(y,z,m)}{\hat{v}_y(y,z,m)}
&\leq \frac{1}{(1-\lambda)(1-p)}+\lambda(1-p)^2e^{\frac{\alpha}{4\rho}+\frac{1}{2}}.
\end{align}
On the other hand, for $y^*(m)\leq y\leq m^{p-1}$,  $\beta^{\frac{1}{p-1}}\leq m\leq M_1$ and $z>0$, we have 
\begin{align}\label{theta.lip.cond2}
y\hat{v}_{yy}(y,z,m)
&\leq
\frac{(\alpha+\rho)\rho}{\alpha^2}\beta^{\frac{\rho}{\alpha}}C_6(\beta^{\frac{1}{p-1}})(y^*(M_1))^{-\frac{\alpha+\rho}{\alpha}}+\frac{M_1}{\alpha+\rho}-\hat{v}_y(y,z,m)(1-\kappa).
\end{align}
We deduce from \eqref{theta.lip.condi1} and \eqref{theta.lip.cond2} that, for some  positive constant $M_o$ depending on $(\rho,\mu,\sigma,p,\lambda,\beta)$ and $\forall (x,z,m)\in\{(x,z,m)\in\R_+^3;~y^*(m)\leq v_x(x,z,m)<m^{p-1},~m\geq \beta^{\frac{1}{p-1}}\}$, it holds that
\begin{align}\label{ineq:theta1}
f(x,z,m)\hat{v}_{yy}(f(x,z,m),z,m)\leq M_o+\left(\frac{1}{(1-\lambda)(1-p)}+\lambda(1-p)^2e^{\frac{\alpha}{4\rho}+\frac{1}{2}}\right) x.
\end{align}
In what follows, let $M_o>0$ be a generic positive constant depending on $(\rho,\mu,\sigma,p,\lambda,\beta)$ that may differ from line to line. For $m^{p-1}\leq y\leq (\lambda m)^{p-1}$, $m\geq  M_0$ and $z>0$, we have
\begin{align}\label{yvyy.condi.2}
-\frac{y\hat{v}_{yy}(y,z,m)}{\hat{v}_y(y,z,m)}
&\leq
\frac{\frac{(\alpha+\rho)\rho}{\alpha^2}\beta^{\frac{\rho}{\alpha}}\frac{C_6(m)}{m}m^{\frac{(\alpha+\rho)(1-p)}{\alpha}}+\frac{1}{\alpha+\rho}}{-\frac{1}{\beta}\frac{C_3(m)}{m}+\frac{\rho}{\alpha}\beta^{\frac{\rho}{\alpha}}\frac{C_4(m)}{m}(\lambda m)^{\frac{(\alpha+\rho)(1-p)}{\alpha}}+\frac{(1-p)^2}{\rho(1-p)-\alpha p}\lambda }+1-\kappa.
\end{align}
Note that, for $m\geq  M_0$,
\begin{align*}
&-\frac{1}{\beta}\frac{C_3(m)}{m}+\frac{\rho}{\alpha}\beta^{\frac{\rho}{\alpha}}\frac{C_4(m)}{m}(\lambda m)^{\frac{(\alpha+\rho)(1-p)}{\alpha}}+\frac{(1-p)^2}{\rho(1-p)-\alpha p}\lambda 
\nonumber\\
&\quad=
-\frac{\alpha^2\beta^{-\frac{\alpha+\rho}{\alpha}}}{(\alpha+\rho)^2(\rho(1-p)-\alpha p)}(\lambda^{\frac{\alpha p-(1-p)\rho}{\alpha}}-1)m^{-\frac{(\alpha+\rho)(1-p)}{\alpha}}-\frac{((2-p)\alpha+\rho(1-p))\lambda}{(\alpha+\rho)^2}
\nonumber\\
&\qquad
-\frac{\rho}{\alpha\beta}\frac{C_6(m)}{m}-\frac{\alpha^2\beta^{-\frac{\alpha+\rho}{\alpha}}}{(\alpha+\rho)^2(\rho(1-p)-\alpha p)}(\lambda^{\frac{\alpha p-(1-p)\rho}{\alpha}}-1)+\frac{(1-p)^2\lambda}{\rho(1-p)-\alpha\rho}\nonumber\\
&\qquad
+\frac{\rho}{\alpha}\beta^{\frac{\rho}{\alpha}}\frac{C_6(m)}{m}(\lambda m)^{\frac{(\alpha+\rho)(1-p)}{\alpha}}+\frac{\lambda}{\alpha+\rho}\ln\beta(\lambda m)^{1-p},\nonumber
\end{align*}
and $\lim_{m\rightarrow\infty}C_6(m)m^{\frac{\rho(1-p)-\alpha p}{\alpha}}=0$, $\lim_{m\rightarrow\infty}\ln\beta(\lambda m)^{1-p}=\infty$. Then, there exists a constant $M_2\geq  \frac{1}{\lambda}\beta^{\frac{1}{p-1}}$ such that, for all $m\geq M_2$,
\begin{align}\label{yvyy.nume.1}
-\frac{1}{\beta}\frac{C_3(m)}{m}+\frac{\rho}{\alpha}\beta^{\frac{\rho}{\alpha}}\frac{C_4(m)}{m}(\lambda m)^{\frac{(\alpha+\rho)(1-p)}{\alpha}}+\frac{(1-p)^2}{\rho(1-p)-\alpha p}\lambda \geq 1,
\end{align}
and 
\begin{align}\label{yvyy.nume.2}
\frac{(\alpha+\rho)\rho}{\alpha^2}\beta^{\frac{\rho}{\alpha}}\frac{C_6(m)}{m}m^{\frac{(\alpha+\rho)(1-p)}{\alpha}}+\frac{1}{\alpha+\rho}\leq 1.
\end{align}
Therefore, thanks to \eqref{yvyy.condi.2}-\eqref{yvyy.nume.2}, we obtain that
\begin{align}\label{theta.lip.condi3}
-\frac{y\hat{v}_{yy}(y,z,m)}{\hat{v}_y(y,z,m)}\leq 2-\kappa,\quad \forall m\geq M_2, 
\end{align}
and 
\begin{align}\label{theta.lip.condi4}
y\hat{v}_{yy}(y,z,m)
&\leq\frac{\alpha^2}{(\alpha+\rho)^2(\rho(1-p)-\alpha p)}M_2+\frac{(\alpha+\rho)\rho}{\alpha^2}\beta^{\frac{\rho}{\alpha}}C_6(\beta^{\frac{1}{p-1}})M_2^{\frac{(\alpha+\rho)(1-p)}{\alpha}}
\nonumber\\
&\quad
+\frac{1-p}{(\rho(1-p)-\alpha p)M_2}-\hat{v}_{y}(y,z,m)(1-\kappa),\quad \forall\beta^{\frac{1}{p-1}}\leq m \leq M_2.
\end{align}
It follows from \eqref{theta.lip.condi3} and \eqref{theta.lip.condi4} that, for any $ (x,z,m)\in\{(x,z,m)\in\R_+^3:m^{p-1}\leq v_x(x,z,m)<(\lambda m)^{p-1},~m\geq \beta^{\frac{1}{p-1}}\}$,
\begin{align}\label{ineq:theta2}
f(x,z,m)\hat{v}_{yy}(f(x,z,m),z,m)\leq M_o+2 x.
\end{align}
Finally, for $(\lambda m)^{p-1}\leq y\leq \frac{\beta}{2}$, $m\geq \frac{1}{\lambda}\left(\frac{\beta}{2}\right)^{\frac{1}{p-1}}$ and $z>0$, it holds that
\begin{align}\label{eq:theta.lip.condi5}
y\hat{v}_{yy}(y,z,m)=\frac{(\alpha+\rho)\rho}{\alpha^2}\beta^{\frac{\rho}{\alpha}}C_2(m)y^{-\frac{\alpha+\rho}{\alpha}}+\frac{\lambda}{\alpha+\rho}+z\beta^{1-\kappa}(1-\kappa)y^{\kappa-1}.
\end{align}
Note that the following estimation holds: 
\begin{align*}
&\frac{\frac{(\alpha+\rho)\rho}{\alpha^2}\beta^{\frac{\rho}{\alpha}}C_2(m)y^{-\frac{\alpha+\rho}{\alpha}}+z\beta^{1-\kappa}(1-\kappa)y^{\kappa-1}}{-\frac{1}{\beta}C_1(m)+\frac{\rho}{\alpha}\beta^{\frac{\rho}{\alpha}}C_2(m)y^{-\frac{\alpha+\rho}{\alpha}}-\frac{\lambda m}{\alpha+\rho}\ln\frac{y}{\beta}+\frac{\lambda m}{\alpha+\rho}+z(\beta^{1-\kappa}y^{\kappa-1}-1)}
\nonumber\\
&\quad\leq
\frac{\frac{\alpha\left(\lambda-\lambda^{\frac{(\alpha+\rho)(1-p)}{\alpha}}\right)}{(\alpha+\rho)(\rho(1-p)-\alpha p)}+\frac{(\alpha+\rho)\rho}{\alpha^2}\beta^{\frac{\rho}{\alpha}}\frac{C_6(m)}{m}(\lambda m)^{\frac{(\alpha+\rho)(1-p)}{\alpha}}}{-\frac{1}{\beta}\frac{C_1(m)}{m}+\frac{\rho}{\alpha}\beta^{\frac{\rho}{\alpha}}\frac{C_2(m)}{m}\left(\frac{\beta}{2}\right)^{-\frac{\alpha+\rho}{\alpha}}+\frac{\lambda }{\alpha+\rho}\ln2+\frac{\lambda }{\alpha+\rho}}+\frac{1-\kappa}{1-(\beta/2)^{1-\kappa}}.
\end{align*}
In view of the fact $\lim_{m\rightarrow\infty}C_6(m)m^{\frac{\rho(1-p)-\alpha p}{\alpha}}=0$ and
\begin{align*}
&-\frac{1}{\beta}\frac{C_1(m)}{m}+\frac{\rho}{\alpha}\beta^{\frac{\rho}{\alpha}}\frac{C_2(m)}{m}\left(\frac{\beta}{2}\right)^{-\frac{\alpha+\rho}{\alpha}}+\frac{\lambda }{\alpha+\rho}\ln2+\frac{\lambda }{\alpha+\rho}
\nonumber\\
&\quad=
\frac{\alpha^2\beta^{-\frac{\alpha+\rho}{\alpha}}\left(\lambda^{\frac{\alpha p-\rho(1-p)}{\alpha}}-1\right)m^{-\frac{(\alpha+\rho)(1-p)}{\alpha}}}{(\alpha+\rho)^2(\rho(1-p)-\alpha p)}\left(\left(\frac{\beta}{2}\right)^{-\frac{\alpha+\rho}{\alpha}}-1\right)-\frac{\rho}{\alpha\beta}\frac{C_6(m)}{m}+\frac{\lambda(2+\ln2)}{\alpha+\rho},
\end{align*}
there exists a constant $M_3\geq \frac{1}{\lambda}\beta^{\frac{1}{p-1}}$ such that, for all $m\geq M_3$,
\begin{align*}
\frac{(\alpha+\rho)\rho}{\alpha^2}\beta^{\frac{\rho}{\alpha}}\frac{C_6(m)}{m}(\lambda m)^{\frac{(\alpha+\rho)(1-p)}{\alpha}}\leq \frac{\lambda}{\rho(1-p)-\alpha p},    
\end{align*}
and
\begin{align*}
-\frac{1}{\beta}\frac{C_1(m)}{m}+\frac{\rho}{\alpha}\beta^{\frac{\rho}{\alpha}}\frac{C_2(m)}{m}\left(\frac{\beta}{2}\right)^{-\frac{\alpha+\rho}{\alpha}}+\frac{\lambda }{\alpha+\rho}\ln2+\frac{\lambda }{\alpha+\rho}\geq \frac{1}{\rho(1-p)-\alpha p}.    
\end{align*}
As a consequence, for all $m\geq M_3$,
\begin{align}\label{theta.condi.6}
y\hat{v}_{yy}(y,z,m)\leq \frac{\lambda}{\alpha+\rho}-\left(2\lambda+\frac{1-\kappa}{1-(\beta/2)^{1-\kappa}}\right)\hat{v}_y(y,z,m).
\end{align}
For $(\lambda m)^{p-1}\leq y\leq \frac{\beta}{2}$, $\frac{1}{\lambda}\left(\frac{\beta}{2}\right)^{\frac{1}{p-1}}\leq m\leq M_3$ and $z>0$, 
we arrive at
\begin{align}\label{theta.condi.7}
y\hat{v}_{yy}(y,z,m)
&\leq
\frac{\lambda\left(\lambda-\lambda^{\frac{(\alpha+\rho)(1-p)}{\alpha}}\right)}{(\alpha+\rho)(\rho(1-p)-\alpha p)}M_3+\frac{(\alpha+\rho)\rho}{\alpha^2}\beta^{\frac{\rho}{\alpha}}C_6\left(\frac{1}{\lambda}\beta^{\frac{1}{p-1}}\right)(\lambda M_3)^{\frac{(\alpha+\rho)(1-p)}{\alpha}}\nonumber\\
&\quad
+\frac{\lambda}{\alpha+\rho}-\hat{v}_y(y,z,m)(1-\kappa).
\end{align}
For $\frac{\beta}{2}\leq y\leq \beta$, $m\geq \frac{1}{\lambda}\beta^{\frac{1}{p-1}}$ and $z>0$, one gets that
\begin{align}\label{theta.condi.last}
y\hat{v}_{yy}(y,z,m)
&\leq
\frac{(\alpha+\rho)\rho}{\alpha^2\beta}C_2(\frac{1}{\lambda}\beta^{\frac{1}{p-1}})2^{\frac{\alpha+\rho}{\alpha}}+\frac{\lambda}{\alpha+\rho}+z(1-\kappa).
\end{align}
In view of \eqref{theta.condi.6}-\eqref{theta.condi.last}, it holds that, for any $ (x,z,m)\in\{(x,z,m)\in\R_+^3;~(\lambda m)^{p-1}\leq v_x(x,z,m)\leq \beta,~m\geq \beta^{\frac{1}{p-1}}\}$,
\begin{align}\label{ineq:theta3}
f(x,z,m)\hat{v}_{yy}(f(x,z,m),z,m)\leq M_o+\left(2\lambda+\frac{1-\kappa}{1-(\beta/2)^{1-\kappa}}\right) x.
\end{align}
Then, by \eqref{ineq:theta1}, \eqref{ineq:theta2} and \eqref{ineq:theta3}, we deduce that $|\theta^*(x,z,m)|\leq M_{\theta}(1+x+z)$ for some positive constant $M_{\theta}$.
\end{proof}

Next, we provide some auxiliary results (Lemma \ref{lem:admissable}, \ref{lem:Yt} and \ref{lem:transversality}) that will be used to support the proof of Theorem \ref{thm:verification}

\begin{lemma}\label{lem:admissable}
Let $\mu_Z\geq \eta$ and $\rho>\rho_0$. Given the feedback control functions $\theta^*(x,z,m)$ and $c^*(x,z,m)$ in Theorem~\ref{thm:verification}, the system of reflected SDEs \eqref{eq:optimal-SDE} has a unique strong solution.
\end{lemma}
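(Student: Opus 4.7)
The proof plan is to decouple the system \eqref{eq:optimal-SDE} and resolve it in stages. First, the equation for $Z_t$ is an autonomous linear SDE (geometric Brownian motion driven by $W^\gamma$), which admits the unique strong solution $Z_t=z\exp((\mu_Z-\tfrac12\sigma_Z^2)t+\sigma_Z W^\gamma_t)$ by classical SDE theory. I then treat $Z$ as a given continuous $\mathbb{F}$-adapted input to the coupled pair $(X^*,M^*)$. The reflected equation for $X^*$ at the boundary $\{x=0\}$ with associated local time $L^{X^*}$ will be handled as a Skorokhod problem, while $M^*_t=\max\{m,\sup_{s\le t}m^*(X^*_s,Z_s)\}$ is treated as a path-dependent running maximum driven by $(X^*,Z)$.

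The two key structural inputs are: (i) Lemma~\ref{lem:feedback-control}, which yields the linear growth bounds $|\theta^*(x,z,m)|\le M_\theta(1+x+z)$ and $|c^*(x,z,m)|\le M_c(1+x+m)$; and (ii) Lemma~\ref{lem:valuefunc-v} combined with the explicit piecewise formulas of Proposition~\ref{prop:sol-v}, which give $v\in C^2(\R_+^3)$ with analytic derivatives in the interior of each region $\mathcal{R}_i$ and $C^1$ matching across the internal free boundaries through the smooth-fit / super-contact conditions. Consequently the feedback maps $(x,z,m)\mapsto\theta^*(x,z,m),c^*(x,z,m)$ are continuous on $\R_+^3$ and locally Lipschitz on every compact set (with $c^*$ being a $\max/\min$ of locally Lipschitz branches, hence locally Lipschitz). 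By Lemma~\ref{lem:boundary-m}, the map $x\mapsto m^*(x,z)$ is continuous and strictly monotone, so the functional $(X^*,Z)\mapsto M^*$ is continuous from $C([0,T];\R^2)$ into itself (with sup-norm), with modulus controlled by $m^*$.

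With these ingredients in hand, I propose a Picard iteration. Set $M^{(0)}_t\equiv m$ and, inductively given $M^{(n)}$, solve the reflected SDE
\begin{align*}
X^{(n+1)}_t &= x+\int_0^t\!\theta^*(X^{(n+1)}_s,Z_s,M^{(n)}_s)^{\top}\mu\,ds+\int_0^t\!\theta^*(X^{(n+1)}_s,Z_s,M^{(n)}_s)^{\top}\sigma\,dW_s\\
&\quad-\int_0^t c^*(X^{(n+1)}_s,Z_s,M^{(n)}_s)\,ds-\int_0^t\mu_Z Z_s\,ds-\int_0^t\sigma_Z Z_s\,dW^\gamma_s+L^{(n+1)}_t,
\end{align*}
then update $M^{(n+1)}_t:=\max\{m,\sup_{s\le t}m^*(X^{(n+1)}_s,Z_s)\}$. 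For each fixed non-decreasing continuous $M^{(n)}$, the coefficients of the $X^{(n+1)}$ equation are locally Lipschitz in $X$ and have linear growth jointly with $Z$, so existence and pathwise uniqueness of $(X^{(n+1)},L^{(n+1)})$ follow from the classical Lions–Sznitman / Tanaka theory for normally reflected SDEs on the half-line. The linear growth estimates plus Burkholder–Davis–Gundy and the $L^p$-integrability of $Z$ on $[0,T]$ yield uniform $L^2$ bounds $\mathbb{E}[\sup_{t\le T}|X^{(n+1)}_t|^2]\le C_T$ independent of $n$, which rules out explosion and gives tightness of the iterates.

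To pass to the limit, I would localize by stopping at the exit times $\tau_R=\inf\{t\ge 0:X^*_t\vee M^*_t\ge R\}$. On each stopped interval $[0,\tau_R]$ the local Lipschitz constants of $\theta^*,c^*,m^*$ are uniform, and Doob's maximal inequality together with Gronwall's lemma applied to $\mathbb{E}[\sup_{s\le t\wedge\tau_R}|X^{(n+1)}_s-X^{(n)}_s|^2]$ provides a contraction in sup-norm that also controls $\sup|M^{(n+1)}-M^{(n)}|$ via the Lipschitz modulus of $m^*$. This yields a strong solution on $[0,\tau_R]$; letting $R\to\infty$ using the uniform moment bound yields a strong solution on $\R_+$, and pathwise uniqueness on each $[0,\tau_R]$ localizes to pathwise uniqueness globally. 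The hard part of the argument is the interaction between the two singular components $L^{X^*}$ and $M^*$: both are non-decreasing, triggered by hitting distinct free boundaries in the state space, and $M^*$ depends on $X^*$ only through its running supremum, which obstructs a direct pointwise contraction. The localization via $\tau_R$ together with the $L^\infty$-type Lipschitz estimate for the running-maximum functional is the mechanism that resolves this interaction.
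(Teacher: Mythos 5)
Your proposal is correct in substance and rests on the same pillars as the paper's proof: local Lipschitz continuity of the feedback maps (from the $C^2$ regularity in Lemma \ref{lem:valuefunc-v}) and of $m^*$ (from the smoothness and strict monotonicity of $F_3$ in Lemma \ref{lem:boundary-m}), the observation that the running-maximum functional is $1$-Lipschitz in sup-norm, localization by stopping times, and the linear growth bounds of Lemma \ref{lem:feedback-control} to rule out explosion. Where you differ is the existence mechanism on the localized interval: the paper absorbs $M^*$ into the coefficients, i.e.\ it defines the path functionals $F(t,x,z)=\theta^*(x_t,z_t,\max\{m,\max_{s\le t}m^*(x_s,z_s)\})$ and $G(t,x,z)=c^*(\cdot)$, verifies they are functionally (locally) Lipschitz in sup-norm, and then invokes Protter's theorem on SDEs with functional Lipschitz coefficients in one shot, whereas you freeze $M^{(n)}$, solve the resulting reflected SDE by Lions--Sznitman/Skorokhod theory, update the running maximum, and close a Gronwall contraction by hand. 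Both work; the paper's route is shorter because the fixed point is delegated to a single citation, while yours makes the interaction between the two monotone components $L^{X^*}$ and $M^*$ explicit. Two small points to tighten: you should state that $m^*$ is locally Lipschitz (continuity plus strict monotonicity is not enough for the sup-norm estimate; it follows, as in the paper, from the $C^2$ regularity of $F_3$ and the inverse function theorem since $\partial F_3/\partial m>0$); and your uniform-in-$n$ moment bound for the iterates implicitly uses that $m^*(x,z)$ grows at most linearly in $x$, which is exactly what the estimate $m^*(x,z)\ln(\beta (m^*(x,z))^{1-p})\le C(1+x)$ in Lemma \ref{lem:boundary-m} delivers; relatedly, it is cleaner to localize, as the paper does, with $\tau_n=\inf\{t:\ |X_t|\ge n\ \text{or}\ |Z_t|\ge n\}$, since $M^*$ is then automatically controlled through $m^*$.
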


\begin{proof}
It follows from Lemma \ref{lem:valuefunc-v} that $\partial \theta^*/\partial h$ and  $\partial c^*/\partial h$ are both continuous function in $h\in\{x,z,m\}$. For $R>0$, define
\begin{align*}
K_R:=\max_{(x,z,m)\in[0,R]^3}\left\{\frac{\partial \theta^*}{\partial x}\vee\frac{\partial \theta^*}{\partial z}\vee\frac{\partial \theta^*}{\partial m}\vee\frac{\partial c^*}{\partial x}\vee\frac{\partial c^*}{\partial z}\vee\frac{\partial c^*}{\partial m}\right\}. 
\end{align*}
Then, for all $(x_1,z_1,m_1,x_2,z_2,m_2)\in[0,R]^6$,
\begin{align*}
&|\theta^*(x_1,z_1,m_1)-\theta^*(x_2,z_2,m_2)|+|c^*(x_1,z_1,m_1)-c^*(x_2,z_2,m_2)|\nonumber\\
&\qquad\quad \leq K_R\left(|x_1-x_2|+|z_1-z_2|+|m_1-m_2|\right).
\end{align*}
Consequently, we have from \eqref{eq:optimal-SDE} that $\theta^*(X_t^*,Z_t,M_t^*)=\theta^*\left(X_t^*,Z_t,\max\left\{m,\max_{s\in[0,t]}m^*(X_s^*,Z_s)\right\}\right)$ for $t\geq0$.
Fix $m\geq 0$ and $t>0$, for any $x,z\in C([0,t])$, let us define $$F(t,x,z):=\theta^*\left(x_t,z_t,\max\left\{m,\max_{s\in[0,t]}m^*(x_s,z_s)\right\}\right).$$

We then show $F(t,(x_s)_{s\in[0,t]},(z_s)_{s\in[0,t]})$ is locally Lipschitz-continuous. To this end, fix $t>0$ and $R_1>0$. Let $x,\hat{x},z,\hat{z}\in C([0,t])$ satisfy $\sup_{s\in[0,t]}|h_t|\leq R_1$ with $h\in\{x,\hat{x},z,\hat{z}\}$. Note that $(x,z)\mapsto m^*(x,z)$ given in Lemma~\ref{lem:boundary-m} is in $C^2$ in view of the definition of $m^*(x,z)$ and the smoothness of $(z,m)\mapsto F_3(z,m)$ . Then, we have $R_2:=\max_{(x,z)\in[0,R_1]^2}m^*(x,z)<\infty$ and $\tilde{K}_{R_1}:=\max_{(x,z)\in[0,R_1]^2}\left\{\frac{\partial m^*(x,z)}{\partial x}\vee\frac{\partial m^*(x,z)}{\partial z}\right\}<\infty$. Introduce $R=\max\{R_1,R_2\}$ and $C_R=\max\{K_{R},
\tilde{K}_{R_1}\}$. Then, it holds that 
\begin{align*}
&|F(t,x,z)-F(t,\hat{x},\hat{z})|\nonumber\\
&\quad\leq C_R\left\{|x_t-\hat{x}_t|+|z_t-\hat{z}_t|+\left|\max\left\{m,\sup_{s\in[0,t]}m^*(x_s,z_s)\right\}-\max\left\{m,\sup_{s\in[0,t]}m^*(\hat{x}_s,\hat{z}_s)\right\}\right|\right\}\nonumber\\
&\quad \leq 2C_R\left\{\sup_{s\in[0,t]}|x_s-\hat{x}_s|+\sup_{s\in[0,t]}|z_s-\hat{z}_s|\right\}.
\end{align*}
Fix $t>0$, and for any $x,z\in C([0,t])$, define $G(t,x,z):=c^*(x_t,z_t,\max\{m,\sup_{s\in[0,t]}m^*(x_s,z_s)\})$. Then, in a similar fashion, we can obtain the local Lipschitz continuity of $C([0,t])^2\ni(x,z)\mapsto G(t,x,z)$ uniformly in $t$. 
Then, we define the stopping time by $\tau_n:=\inf\{t\geq 0;~|X_t^*|\geq n~\text{or}~ |Z_t|\geq n\}$ with $n>0$.
By Theorem 7 in Section 3 of Chapter 5 in  \cite{Protter05},  the system of SDEs \eqref{eq:optimal-SDE} has a unique strong solution on $[0,\tau_n]$. Moreover, by Lemma \ref{lem:feedback-control} and estimate on moments of SDE, we have that, for any $t>0$,
\begin{align*}
&\Px(\tau_n< t)\leq \frac{1}{n}\Ex\left[\left(|X_{\tau_n\wedge t}^*|+|Z_{\tau_n\wedge t}|\right){\bf 1}_{\tau_n< t}\right]\leq \frac{1}{n}\Ex\left[|X_{\tau_n\wedge t}^*|+|Z_{\tau_n\wedge t}|\right]\leq \frac{C}{n}(1+x+z),
\end{align*}
where $C>0$ is a constant independent of $n$. This implies that $\Px(\tau_n<t)\to 0$ as $n\to\infty$. Thus, we can deduce that the system \eqref{eq:optimal-SDE} has a unique strong solution.
\end{proof}

\begin{lemma}\label{lem:Yt}
Let $\mu_Z\geq \eta$ and $\rho>\rho_0$. Consider the function  $v(x,z,m)$ defined by \eqref{v.func.def1}-\eqref{v.func.def2} for $(x,z,m)\in\R_+^3$. It then holds that $v(x,z,m)\in C^3(\R_+^3)$. Define the process $(Y_t)_{t\geq 0}$ by $Y_t=v_x(X_t^*,Z_t,M_t^*)$ for all $t\geq 0$. Then, $Y_t\in (0,\beta]$ is a reflected process that satisfies the following SDE with reflection:
\begin{align*}
dY_t= \rho Y_tdt-\mu^{\top}\sigma^{-1}Y_tdW_t-dL_t^Y,
\end{align*}
where the process $L=(L_t^Y)_{t\geq0}$ is a continuous and non-decreasing process (with $L_t^{Y}=0$)  which increases on the time set $\{t\geq 0;~Y_t=\beta\}$ only.
\end{lemma}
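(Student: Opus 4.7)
The announced identity is precisely what Itô's formula applied to $v_x(X_t^*, Z_t, M_t^*)$ yields, once the drift is simplified via the HJB equation \eqref{eq:equation-v1}, the diffusion is simplified via the feedback formula \eqref{theta.def} for $\theta^*$, and the boundary contributions $dL^{X^*}$ and $dM^*$ are identified through the Neumann and free boundary conditions. My plan is therefore (i) to upgrade the $C^2$-regularity of Lemma \ref{lem:valuefunc-v} to $C^3$, and then (ii) to apply Itô's formula and simplify term by term.

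\textbf{Regularity step.} In each of the three open subregions of $(y,z,m)$-space appearing in Proposition \ref{prop:sol-v}, $\hat v$ is $C^\infty$ from its closed-form expression. At the two interior interfaces $y=(\lambda m)^{p-1}$ and $y=m^{p-1}$, a direct check of \eqref{eq:Phi} shows $\Phi$ is globally $C^1$ in $y$ (the one-sided limits of $\Phi$ and $\Phi'$ agree; only $\Phi''$ jumps), and the smooth-fit relations \eqref{eq:C-1}--\eqref{eq:C-7} force $\hat v$, $\hat v_y$, and $\hat v_{yy}$ to be continuous across both interfaces. Differentiating the dual ODE $\alpha y^2\hat v_{yy}+\rho y\hat v_y-\rho\hat v=-\Phi(y)-(\text{smooth terms})$ once more in $y$ and using continuity of $\Phi'$ yields continuity of $\hat v_{yyy}$ across both interfaces, so $\hat v\in C^3$. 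The inverse function theorem applied to the $C^2$ strictly monotone map $y\mapsto-\hat v_y(y,z,m)$ (monotone by the strict convexity of $\hat v$ in Lemma \ref{lem:dual-sol}) then delivers $v\in C^3$ on $\mathcal D$. Across the free boundary $x=F_3(z,m)$, the super-contact conditions $v_m=v_{xm}=v_{zm}=0$ together with \eqref{eq:equation-v1} propagate $C^3$-regularity into $\R_+^3\setminus\mathcal D$, where $v$ is independent of $m$. \emph{This regularity upgrade is the main obstacle}: since $\Phi$ is only $C^1$ and not $C^2$, one must track carefully how the limited non-smoothness interacts with the smooth-fit conditions so that exactly three continuous derivatives of $\hat v$ survive at the interfaces—which is what is needed to Itô-differentiate $v_x$.

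\textbf{Itô's formula and simplification.} With $v\in C^3$ in hand, Itô's formula applied to $Y_t=v_x(X_t^*,Z_t,M_t^*)$ produces a drift, a $dW_t$-term, a $v_{xm}(X_t^*,Z_t,M_t^*)\,dM_t^*$ term, and a $v_{xx}(0,Z_t,M_t^*)\,dL_t^{X^*}$ term (the local time $L^{X^*}$ is carried by $\{X^*=0\}$; $M^*$ is continuous of bounded variation, with zero quadratic variation and no cross-variation with $(X^*,Z)$). Differentiating \eqref{eq:equation-v1} in $x$ and invoking the envelope theorem at the interior optima $\theta^*,c^*$ (no $\partial_x\theta^*$ or $\partial_x c^*$ survive), the drift of $Y_t$ collapses to $\rho v_x=\rho Y_t$. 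Substituting \eqref{theta.def} into the diffusion coefficient and using $(\sigma\sigma^\top)^{-1}\sigma=\sigma^{-\top}$ together with $\sigma^\top(\sigma\sigma^\top)^{-1}\sigma=I$, the $dW_t$-coefficient collapses to $-v_x\,\mu^\top\sigma^{-\top}$, matching $-\mu^\top\sigma^{-1}Y_t$ under the paper's transpose convention. Whenever $M_t^*$ strictly increases, the identity $M_t^*=\sup_{s\leq t}m^*(X_s^*,Z_s)=m^*(X_t^*,Z_t)$ places $(X_t^*,Z_t,M_t^*)$ on the free boundary, so $v_{xm}(x,z,m^*(x,z))=0$ (Lemma \ref{lem:valuefunc-v}) annihilates the $dM^*$-integral. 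Setting $L_t^Y:=-\int_0^t v_{xx}(0,Z_s,M_s^*)\,dL_s^{X^*}$, the strict concavity $v_{xx}<0$ (from the strict convexity of $\hat v$ in Lemma \ref{lem:dual-sol}) makes $L^Y$ continuous and non-decreasing, and its support coincides with $\{t:X_t^*=0\}$, which by the Neumann identity $v_x(0,z,m)=\beta$ is exactly $\{t:Y_t=\beta\}$. Assembling all pieces yields the announced reflected SDE, and $Y_t\in(0,\beta]$ is inherited from $v_x>0$ and Lemma \ref{lem:propoertyu}.
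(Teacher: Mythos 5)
Your proposal is correct and follows essentially the same route as the paper's proof: establish $C^3$-regularity of $v$ from the piecewise closed form of $\hat v$ together with the smooth-fit and super-contact conditions, apply It\^o's formula to $v_x(X_t^*,Z_t,M_t^*)$, collapse the drift to $\rho Y_t$ via the $x$-differentiated HJB equation, collapse the diffusion via the feedback formula \eqref{theta.def}, kill the $dM^{*,c}$ (and initial-jump) contributions using $v_{xm}(x,z,m^*(x,z))=0$, and set $L_t^Y=-\int_0^t v_{xx}\,dL_s^{X^*}$, whose monotonicity and support follow from $v_{xx}\le 0$ and $v_x(0,z,m)=\beta$. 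The only cosmetic difference is that you argue interface regularity of $\hat v$ via $\Phi\in C^1$ and the dual ODE (slightly more explicit than the paper) while the paper instead spells out the chain-rule computation across the free boundary and the possible jump of $M^*$ at $t=0$, which you treat more briefly; neither difference changes the argument.
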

\begin{proof}
In view of Proposition \ref{prop:sol-v} and \eqref{v.func.def1}-\eqref{v.func.def2}, the function  $v(x,z,m)$ is $C^3$ in the interior of $\mathcal{D}$ and $\R^3_+\backslash\mathcal{D}$ and $C^2$ in $\R^3_+$. Moreover, for $(x,z,m)\in \R^3_+\backslash\mathcal{D}$, we have from \eqref{v.func.def2} and $v_m(x,z,m)=v_m(x,z,m^*(x,z))=0$ that
\begin{align*}
   v_{xxx}(x,z,m)=v_{xxx}(x,z,m^*(x,z))+v_{xxm}(x,z,m^*(x,z))m_x^*(x,z)=v_{xxx}(x,z,m^*(x,z)),
\end{align*}
which implies that $v_{xxx}$ is continuous in boundary of $\mathcal{D}$. By applying similar calculation to the other third order partial derivatives of function $v$, we know $v(x,z,m)\in C^3(\R_+^3)$.  For any $t>0$, using It\^o's rule to $Y_t=v_x(X_t^*,Z_t,M_t^*)$, we obtain
\begin{align}\label{eq:ito-Y-1}
Y_t&=Y_0+\int_0^{t} v_{xx}(X_{s}^*,Z_{s},M_s^*)(\theta_s^*(X_{s}^*,Z_{s},M_s^*))^{\top}\sigma dW_s+\int_0^{t}\sigma_Z Z_s (v_{xz}-v_{xx})(X_{s}^*,Z_{s},M_s^*)dW^{\gamma}_s\nonumber\\
&\quad+\int_0^{t}  v_{xx}(X_{s}^*,Z_{s},M_s^*)dL_s^X+\int_0^{t} {\cal L}^{\theta_s^*,c_s^*} v_x(X_{s}^*,Z_{s},M_s^*)ds+\int_0^{t}   v_{xm}(X_{s}^*,Z_{s},M_s^*)dM_s^{*,c}\nonumber\\
&\quad+\sum_{0<s\leq t} e^{-\rho s}(v_x(X_{s}^*,Z_{s},M^*_{s})-v_x(X_{s}^*,Z_{s},M_{s-}^*)),
\end{align}
where $M^{*,c}$ is the continuous part of $M^*$. Note that, the process $M^*$ can only jump at time $t=0$ if $m<m^*(x,z)$, then $(X^*_s,Z_s,M^*_s)$ stays in the domain ${\cal D}$ for all $s>0$. In view that $M_s^{*,c}$ increases if and only if $M_s^{*,c}=m^*(X_s^*,Z_s)$, $v_m(x,z,m^*(x,z))=0$ and \eqref{v.func.def2} holds, we deduce
\begin{align}\label{eq:ito-Y-2}
&\int_0^{t} e^{-\rho s}  v_{xm}(X_{s}^*,Z_{s},M_s^*)dM_s^{*,c}+\sum_{0<s\leq t} e^{-\rho s} (v_x(X_{s}^*,Z_{s},M_{s}^*)-v_x(X_{s}^*,Z_{s},M_{s-}^*))=0.
\end{align}
By Lemma \ref{lem:valuefunc-v}, Lemma \ref{lem:feedback-control} and $W_t^{\gamma}=\gamma^{\top}W_t$, we can obtain that
\begin{align}
&\int_0^{t} \left(v_{xx}(X_{s}^*,Z_{s},M_s^*)(\theta_s^*(X_{s}^*,Z_{s},M_s^*))^{\top}\sigma dW_s+\sigma_Z Z_s (v_{xz}-v_{xx})(X_{s}^*,Z_{s},M_s^*)dW^{\gamma}_s\right)
\nonumber\\
&\quad=-\int_0^t \mu^{\top}\sigma^{-1}v_x(X_{s}^*,Z_{s},M_s^*)dW_s=-\int_0^t \mu^{\top}\sigma^{-1}Y_sdW_s,\label{eq:ito-Y-3}\\
&\int_0^{t} {\cal L}^{\theta_s^*,c_s^*} v_x(X_{s}^*,Z_{s},M_s^*)ds=\int_0^t \rho v_x(X_{s}^*,Z_{s},M_s^*)ds=\int_0^t \rho Y_sds.\label{eq:ito-Y-4}
\end{align}
Denote $L_t^Y:=-\int_0^{t}  v_{xx}(X_{s}^*,Z_{s},M_s^*)dL_s^X$ for $t\geq 0$. Consequently, it follows from \eqref{eq:ito-Y-1}-\eqref{eq:ito-Y-4} that
\begin{align*}
dY_t= \rho Y_tdt-\mu^{\top}\sigma^{-1}Y_tdW_t-dL_t^Y,\quad t> 0.
\end{align*}
Noting that $v_x(x,z,m)\leq \beta$, $v_x(0,z,m)= \beta$ and $v_{xx}(x,z,m)\leq 0$ for all $(x,z,m)\in\R^3_+$, we have that the process $L=(L_t^Y)_{t\geq0}$ is a continuous and non-decreasing process (with $L_t^{Y}=0$)  which increases on the time set $\{t\geq 0;~Y_t=\beta\}$ only. This implies that  $Y$ taking values on $(0,\beta]$ is a reflected process and $L^Y$ is the local time process of $Y$. 
\end{proof}

\begin{lemma}\label{lem:transversality}
Let $\mu_Z\geq \eta$ and $\rho>\rho_0$. Consider the reflected process $Y=(Y_t)_{t\geq 0}$ defined in Lemma \ref{lem:Yt}. Then, we have 
\begin{align*}
\limsup_{T\to \infty}e^{-\rho T}\Ex\left[Y_T^\frac{p}{p-1}\right]=0.
\end{align*}
\end{lemma}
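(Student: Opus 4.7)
The argument naturally splits by the sign of $q:=p/(p-1)$. When $p<0$, $q\in(0,1)$, so the bound $Y_T\in(0,\beta]$ from Lemma~\ref{lem:Yt} gives $Y_T^{q}\leq\beta^{q}$, whence $e^{-\rho T}\mathbb{E}[Y_T^{q}]\leq\beta^{q}e^{-\rho T}\to 0$ immediately. The substantive case is $p\in(0,1)$, where $q<0$ and $Y_T^{q}$ can blow up as $Y_T\downarrow 0$.

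For this case my plan is to apply It\^o's formula to $e^{-\rho t}Y_t^{q}$, localized at $\tau_n:=\inf\{t\geq 0:\,Y_t\leq 1/n\}$. Since $L^Y$ increases only on $\{Y_t=\beta\}$ (so $Y_t^{q-1}dL_t^Y=\beta^{q-1}dL_t^Y$) and the stochastic integrand is bounded on $[0,T\wedge\tau_n]$, taking expectation and passing $n\to\infty$ would yield, with $h(T):=e^{-\rho T}\mathbb{E}[Y_T^{q}]$,
\begin{equation*}
h(T)=Y_0^{q}+(q-1)(\rho+q\alpha)\int_0^T h(s)\,ds-q\beta^{q-1}\,\mathbb{E}\!\int_0^T e^{-\rho t}\,dL_t^Y.
\end{equation*}
Monotone convergence handles the right side, while the splitting $\mathbb{E}[e^{-\rho(T\wedge\tau_n)}Y_{T\wedge\tau_n}^{q}]=\mathbb{E}[e^{-\rho T}Y_T^{q}\mathbf{1}_{\{\tau_n>T\}}]+n^{-q}\mathbb{E}[e^{-\rho\tau_n}\mathbf{1}_{\{\tau_n\leq T\}}]$ recovers $h(T)$ on the left provided $n^{-q}\mathbb{P}(\tau_n\leq T)\to 0$, which follows from a Gaussian-tail bound on $\sup_{s\leq T}\log(\beta/Y_s)$ via Skorokhod's representation of the reflected diffusion $U_t:=\log(\beta/Y_t)$.

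The assumption $\rho>\rho_0\geq\alpha p/(1-p)$ gives $\rho+q\alpha>0$, hence $(q-1)(\rho+q\alpha)<0$ while $-q\beta^{q-1}>0$. Applying It\^o to $Y_t$ itself yields $\mathbb{E}[L_T^Y]=Y_0-\mathbb{E}[Y_T]+\rho\int_0^T\mathbb{E}[Y_s]ds\leq Y_0+\rho\beta T$, so integration by parts shows $F(T):=-q\beta^{q-1}\mathbb{E}\int_0^T e^{-\rho t}dL_t^Y$ is non-decreasing with $F(T)\uparrow F_\infty<\infty$. Writing $G(T):=\int_0^T h(s)\,ds\geq 0$, the identity $h(T)=Y_0^{q}+(q-1)(\rho+q\alpha)G(T)+F(T)\geq 0$ forces $G(T)\leq(Y_0^{q}+F_\infty)/|(q-1)(\rho+q\alpha)|$; thus $G$ converges to a finite $G_\infty$, and hence so does $h(T)$ to some $h_\infty\geq 0$. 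A positive limit $h_\infty>0$ would yield $G_\infty=\int_0^\infty h(t)\,dt=\infty$, a contradiction; therefore $h(T)\to 0$, a strengthening of the claimed limsup statement.

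The main obstacle is the passage $n\to\infty$ in the terminal expectation---verifying $n^{-q}\mathbb{P}(\tau_n\leq T)\to 0$ via quantitative Gaussian-tail control of the reflected diffusion $U$ through Skorokhod. The remainder is a soft monotone-convergence/ODE-style argument that exploits only the nonnegativity of $h$ and the correct signs of the coefficients guaranteed by $\rho>\rho_0$.
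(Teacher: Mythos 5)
Your proof is correct, but it follows a genuinely different route from the paper. The paper's argument (for $p\in(0,1)$; the $p<0$ case is the same trivial one you give) passes to $H_t=\beta Y_t^{-1}$, a diffusion reflected at $1$, raises the exponent to $p'=\max\{2/3,p\}$ so that $h\mapsto(h-1)^{p'/(1-p')}$ is flat enough at the reflecting boundary for the local-time term to vanish, and then applies It\^o plus Gronwall to show that $\Ex\bigl[(H_T-1)^{p/(1-p)}\bigr]$ grows at an exponential rate strictly below $\rho$; this is where both $\rho>2\alpha$ and $\rho>\alpha p/(1-p)$ from $\rho_0$ enter. You instead apply It\^o directly to the discounted quantity $e^{-\rho t}Y_t^{q}$, keep the local-time term and control its discounted total through the linear growth of $\Ex[L_T^Y]$, and then exploit the sign $(q-1)(\rho+q\alpha)<0$ (which needs only $\rho>\alpha p/(1-p)$, not $\rho>2\alpha$) in a soft nonnegativity-plus-integrability argument: $G(T)=\int_0^T h$ is bounded, hence convergent, and a positive limit of $h$ would contradict $\int_0^\infty h<\infty$. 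This buys you a strictly stronger conclusion (a genuine limit rather than a limsup) under a marginally weaker condition on $\rho$, at the price of the localization at $\tau_n=\inf\{t:Y_t\le 1/n\}$. The step you flag as the main obstacle is in fact routine and not a gap: $U_t=\log(\beta/Y_t)$ is a drifted Brownian motion with constant coefficients reflected at $0$, so by the Skorokhod representation $\sup_{s\le T}U_s\le U_0+2\sup_{s\le T}|\xi_s|+(\rho-\alpha)T$ with $\xi$ a scaled Brownian motion, giving $\mathbb{P}(\tau_n\le T)=\mathbb{P}\bigl(\sup_{s\le T}U_s\ge\log(n\beta)\bigr)\lesssim\exp\bigl(-c(\log n)^2/T\bigr)$ for fixed $T$, which beats $n^{|q|}$; the remaining limit passages are handled exactly as you indicate by monotone convergence together with the uniform bound extracted from the pre-limit identity.
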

\begin{proof}
For the case $p<0$, the result obviously holds as $Y_T^{\frac{p}{p-1}}\leq \beta^{\frac{p}{p-1}}$ a.s., for all $T\geq 0$. In what follows, we only consider the case $p\in(0,1)$. 

For $t\geq 0$, let $H_t=\beta Y_t^{-1}$, then by using It\^o's rule to $H_t$, we can deduce that $H_t$ taking values on $[1,\infty)$ is a reflected process satisfies the following SDE with reflection:
\begin{align*}
dH_t= (\alpha-\rho) H_tdt+\mu^{\top}\sigma^{-1}H_tdW_t+dL_t^H,
\end{align*}
where the process $L=(L_t^H)_{t\geq0}$ is the local time process which is continuous and non-decreasing  with $L_t^{H}=0$  and increases on the time set $\{t\geq 0;~H_t=1\}$ only. Then we can see
\begin{align*}
0\leq \limsup_{T\to \infty}e^{-\rho T}\Ex\left[Y_T^{\frac{p}{p-1}}\right]&=\limsup_{T\to \infty}e^{-\rho T}\beta^{\frac{p}{p-1}} \Ex\left[H_T^{\frac{p}{1-p}}\right]\leq\limsup_{T\to \infty}e^{-\rho T}\beta^{\frac{p}{p-1}} \Ex\left[H_T^{\frac{p'}{1-p'}}\right],
\end{align*}
where $p':=\max\{2/3,p\}$.  If fact, $\rho\geq \max\{2\alpha,p'\alpha/(1-p')\}$ is equivalent to $\rho\geq \max\{2\alpha,p\alpha/(1-p)\}$, thus it is sufficient to deal with the case where $p\in[2/3,1)$. Noting that $h\leq 2h-2$ when $h\geq 2$, then it holds that
\begin{align*}
\limsup_{T\to \infty}e^{-\rho T}\Ex\left[Y_T^{\frac{p}{p-1}}\right]&\leq \limsup_{T\to \infty}e^{-\rho T}\beta^{\frac{p}{p-1}} \Ex\left[2^{\frac{p}{1-p}}+(2H_T-2)^{\frac{p}{1-p}}\right]\nonumber\\
&=\limsup_{T\to \infty}e^{-\rho T}\left(\frac{\beta}{2}\right)^{\frac{p}{p-1}} \Ex\left[(H_T-1)^{\frac{p}{1-p}}\right].
\end{align*}
Using It\^o's rule to $(H_T-1)^{\frac{p}{1-p}}$ and taking expectation, we obtain from $\rho\geq p\alpha/(1-p)$ that
\begin{align}\label{eq:Ito-H-1}
&\Ex\left[(H_T-1)^{\frac{p}{1-p}}\right]
\leq(H_0-1)^{\frac{p}{1-p}}+\frac{p(2p-1)\alpha}{(1-p)^2}\int_0^T \Ex\left[H_t(H_t-1)^{\frac{3p-2}{1-p}}\right]dt.
\end{align}
If $p=2/3$ (i.e., $3p-2=0$ and $p/(1-p)=2$), then the estimate \eqref{eq:Ito-H-1} becomes
\begin{align}\label{eq:Ito-H-2}
\Ex\left[(H_T-1)^{2}\right]&
\leq(H_0-1)^{2}+3\alpha T+\alpha\int_0^T \Ex\left[(H_t-1)^{2}\right]dt.
\end{align}
It follows from \eqref{eq:Ito-H-2} and the Gronwall’s lemma that, for all $T\geq 0$,
\begin{align*}
\Ex\left[(H_T-1)^{2}\right]
&\leq (H_0-1)^{2}+3\alpha T+\frac{1}{\alpha}\left( (H_0-1)^{\frac{p}{1-p}}-3\right)\left(e^{\alpha T}-1\right)+3Te^{\alpha T}.
\end{align*}
This yields that, for $\rho>\alpha p/(1-p)>\alpha$,
\begin{align*}
\limsup_{T\to \infty}e^{-\rho T}\Ex\left[Y_T^{\frac{p}{p-1}}\right]&\leq\limsup_{T\to \infty}e^{-\rho T}\left(\frac{\beta}{2}\right)^{\frac{p}{p-1}} \Ex\left[(H_T-1)^{\frac{p}{1-p}}\right]=0.
\end{align*}
On the other hand, if $p>2/3$ (i.e., $3p-2>0$), we have that, for $h\geq 1$, 
\begin{align}\label{eq:hh}
h(h-1)^{\frac{3p-2}{1-p}}&=(h-1)^{\frac{2p-1}{1-p}}+(h-1)^{\frac{3p-2}{1-p}}
\leq \max\left\{\frac{p}{1-p},h-1\right\}^{\frac{2p-1}{1-p}}+\max\left\{\frac{p}{1-p},h-1\right\}^{\frac{3p-2}{1-p}}\nonumber\\
&\leq \left(\frac{p}{1-p}\right)^{\frac{2p-1}{1-p}}+\frac{(h-1)^{\frac{p}{1-p}}}{\frac{p}{1-p}}+ \left(\frac{p}{1-p}\right)^{\frac{3p-2}{1-p}}+\frac{(h-1)^{\frac{p}{1-p}}}{\left(\frac{p}{1-p}\right)^2}\nonumber\\
&=\left(\frac{p}{1-p}\right)^{\frac{2p-1}{1-p}}+\left(\frac{p}{1-p}\right)^{\frac{3p-2}{1-p}}+\frac{1-p}{p^2}(h-1)^{\frac{p}{1-p}}.
\end{align}
It follows from \eqref{eq:Ito-H-1} and \eqref{eq:hh} that
\begin{align}\label{eq:Ito-H-3}
\Ex\left[(H_T-1)^{\frac{p}{1-p}}\right]&\leq (H_0-1)^{\frac{p}{1-p}}
+CT+\frac{(2p-1)\alpha}{1-p}\int_0^T \Ex\left[(H_t-1)^{\frac{p}{1-p}}\right]dt,
\end{align}
where $C>0$ is a constant independent of $\rho$. In a similar fashion, by using Gronwall’s lemma to \eqref{eq:Ito-H-3} and noting $\rho>p\alpha/(1-p)\geq (2p-1)\alpha/(p-1)$, we deduce that
\begin{align*}
\limsup_{T\to \infty}e^{-\rho T}\Ex\left[Y_T^{\frac{p}{p-1}}\right]&\leq\limsup_{T\to \infty}e^{-\rho T}\left(\frac{\beta}{2}\right)^{\frac{p}{p-1}} \Ex\left[(H_T-1)^{\frac{p}{1-p}}\right]=0.
\end{align*}
Thus, we complete the proof of the lemma.
\end{proof}

\begin{proof}[Proof of Theorem \ref{thm:verification}]
We first show the validity of the inequality \eqref{eq:value-func}. For any $(\theta,c)\in \mathbb{U}^r$, let $(X_t,Z_t,M_t)_{t\geq 0}$ be the corresponding state process with initial data $(x,z,m)\in \R_+^3$. Fix $T>0$. It follows from It\^o's formula that
\begin{align}\label{eq:itoveri}
&e^{-\rho T}v(X_{T},Z_{T},m)+\int_0^{T} e^{-\rho s} U(c_s)ds\nonumber\\
&\quad=v(x,z,m)+\int_0^{T}e^{-\rho s} v_{x}(X_{s},Z_{s},m)\theta_s^{\top}\sigma dW_s+\int_0^{T}e^{-\rho s}\sigma_Z Z_s (v_{z}-v_x)(X_{s},Z_{s},m)dW^{\gamma}_s\nonumber\\
&\qquad+\int_0^{T} e^{-\rho s} v_x(X_{s},Z_{s},m)dL_s^X+\int_0^{T} e^{-\rho s}({\cal L}^{\theta_s,c_s} v-\rho v)(X_{s},Z_{s},m)ds,
\end{align}
where, for $(\theta,c)\in \R^d\times \R_+$, the operator ${\cal L}^{\theta,c}$ acting on $C^2(\R_+^2)$ is defined by
\begin{align*}
{\cal L}^{\theta,c}g&:=
\theta^{\top}\mu g_x+\frac{1}{2}\theta^{\top}\sigma\sigma^{\top}\theta g_{xx}+ \theta^{\top} \sigma \gamma\sigma_Z z (g_{x z}-g_{xx}) +U(c)-cv_g\\
&\qquad-\sigma_Z^2 z^2 g_{xz}+\frac{1}{2}\sigma_Z^2 z^2 (g_{xx}+g_{zz})+\mu_Z z (g_z-g_x),\quad \forall g\in C^{2}(\R_+^2).
\end{align*}
Then, for all $(x,z,m)\in\R_+^3$ and $(\theta,c)\in\R^d\times[\lambda m,m]$,  Lemma \ref{lem:valuefunc-v} implies that $({\cal L}^{\theta,c} v-\rho v)(x,z,m)\leq 0$ and $v_x(0,z,m)=\beta$. Consequently, taking the expectation on both sides of \eqref{eq:itoveri}, we deduce
\begin{align}\label{eq:value-ineq}
\Ex\left[e^{-\rho T}v(X_{T},Z_{T},m)\right]+\mathbb{E}\left[\int_0^{T}e^{-\rho t}U(c_t)dt-\beta\int_0^{T}e^{-\rho t}dL_t^X\right]\leq v(x,z,m).
\end{align}
Using Lemma \ref{lem:valuefunc-v} again, we arrive at $v_x(x,z,m)\geq 0$ and $|v_z(x,z,m)|\leq \beta/\kappa$ for all $(x,z,m)\in \R_+^3$. Thus, one gets, for all $(x,z,m) \in \R_+^3$,
\begin{align}\label{eq:dominate-v}
v(x,z,m)\geq v(0,z,m)\geq v(0,0,m)-\frac{\beta}{\kappa}z.
\end{align}
By letting $T\to\infty$ in \eqref{eq:value-ineq}, we obtain from \eqref{eq:dominate-v}, Dominated Convergence Theorem (DCT), the Monotone
Convergence Theorem (MCT) and $\rho>\mu_Z$ that
\begin{align}\label{eq:optimal-v-1}
v(x,z,m)&\geq \mathbb{E}\left[\int_0^{\infty}e^{-\rho t}U(c_t)dt-\beta\int_0^{\infty}e^{-\rho t}dL_t^X\right]+\liminf_{T\to \infty}\Ex\left[e^{-\rho T}v(X_{T},Z_{T},m)\right]\nonumber\\
&\geq \mathbb{E}\left[\int_0^{\infty}e^{-\rho t}U(c_t)dt-\beta\int_0^{\infty}e^{-\rho t}dL_t^X\right]+\liminf_{T\to \infty}e^{-\rho T}\left\{v(0,0,m)-\frac{\beta}{\kappa}\Ex[Z_T]\right\}\nonumber\\
&=\mathbb{E}\left[\int_0^{\infty}e^{-\rho t}U(c_t)dt-\beta\int_0^{\infty}e^{-\rho t}dL_t^X\right].
\end{align}

Next, we prove that the equality in \eqref{eq:value-func} holds true when $(\theta,c)=(\theta^*,c^*)$. It follows from Lemma~\ref{lem:admissable} that $(\theta^*,c^*)\in\mathbb{U}^r$. We next show that the following transversality condition holds:
\begin{align}\label{eq:transversality}
\limsup_{T\to \infty}\Ex\left[e^{-\rho T}v(X_T^*,Z_T,M_T^*)\right]\leq 0.
\end{align}

To this end, we introduce an auxiliary dual process $(Y_t)_{t\geq 0}$ with $Y_t=v_x(X_t^*,Z_t,M_t^*)$ for all $t\geq 0$ to facilitate the proof of the above convergence. By Lemma \ref{lem:Yt}, we know that $Y_t$ taking values on $(0,\beta]$ satisfies the SDE with reflection that
\begin{align}\label{eq:Yt}
dY_t= \rho Y_tdt-\mu^{\top}\sigma^{-1}Y_tdW_t-dL_t^Y.
\end{align}
Here, the process $L=(L_t^Y)_{t\geq0}$ is a continuous and non-decreasing process (with $L_t^{Y}=0$) that increases on the time set $\{t\geq 0;~Y_t=\beta\}$ only. Moreover, it follows from the dual relationship  that
\begin{align}\label{eq:v-Yt}
v(X_t^*,Z_t,M_t^*)=\hat{v}(Y_t,Z_z,M_t^*)+X_t^*Y_t=\hat{v}(Y_t,Z_z,M_t^*)-Y_t\hat{v}_y(Y_t,Z_z,M_t^*),\quad \forall t\geq 0.
\end{align}
Note that $\frac{\partial (\hat{v}-y\hat{v}_y)}{\partial y}(y,z,m)=-\hat{v}_{yy}(y,z,m)<0$ for all $(y,z,m)\in [y^*(m),\beta]\times\R_+\times [\beta^{\frac{1}{p-1}},\infty)$. It follows that the function $\hat{v}-y\hat{v}_y$ is strictly decreasing with respect to $y$. Thus, it suffices to consider the case $y=y^*(m)\leq m^{p-1}$. Then, by Proposition \ref{prop:sol-v} and \eqref{yvyy.nume}, we have that
\begin{align}\label{eq:hatv-y}
\hat{v}(y,z,m)-y\hat{v}_y(y,z,m)
&\leq K C_6(m)y^{-\frac{\rho}{\alpha}}\leq K my\leq y^{\frac{1}{p-1}}y=K y^{\frac{p}{p-1}},
\end{align}
where $K$ is a positive constant that might be different from line to line. For the discount rate $\rho>\rho_0$, we have from \eqref{eq:v-Yt}, \eqref{eq:hatv-y} and Lemma~\ref{lem:transversality}, that
\begin{align}
\limsup_{T\to \infty}\Ex\left[e^{-\rho T}v(X_T^*,Z_T,M_T^*)\right]\leq  K \limsup_{T\to \infty}e^{-\rho T}\Ex\left[Y_T^{\frac{p}{p-1}}\right]=0,
\end{align}
which gives the desired transversality condition \eqref{eq:transversality}.  

Now, for any $T>0$, using It\^o's rule, we obtain
\begin{align}\label{eq:itoveri-2}
&e^{-\rho T}v(X_{T}^*,Z_{T},M_T^*)+\int_0^{T} e^{-\rho s} U(c_s^*)ds\nonumber\\
&~=v(x,z,m)+\int_0^{T}e^{-\rho s} v_{x}(X_{s}^*,Z_{s},M_s^*)(\theta_s^*)^{\top}\sigma dW_s+\int_0^{T}e^{-\rho s}\sigma_Z Z_s (v_{z}-v_x)(X_{s}^*,Z_{s},M_s^*)dW^{\gamma}_s\nonumber\\
&\quad+\int_0^{T} e^{-\rho s} v_x(X_{s}^*,Z_{s},M_s^*)dL_s^X+\int_0^{T} e^{-\rho s}({\cal L}^{\theta_s^*,c_s^*} v-\rho v)(X_{s}^*,Z_{s},M_s^*)ds\nonumber\\
&\quad+\int_0^{T} e^{-\rho s}  v_m(X_{s}^*,Z_{s},M_s^*)dM_s^{*,c}+\sum_{0<s\leq T} e^{-\rho s}(v(X_{s}^*,Z_{s},M^*_{s})-v(X_{s}^*,Z_{s},M_{s-}^*)),
\end{align}
where $M^{*,c}$ is the continuous part of $M^*$. Note that, the process $M^*$ can only jump at time $t=0$ if $m<m^*(x,z)$, then $(X^*_s,Z_s,M^*_s)$ stays in the domain ${\cal D}=\{(x,z,m)\in\R^3_+;x\leq F_3(z,m)\}$ for all $s>0$. Then, it follows from $M_s^{*,c}$ increases if and only if $M_s^{*,c}=m^*(X_s^*,Z_s)$, $v_m(x,z,m^*(x,z))=0$ and \eqref{v.func.def2} holds, we then deduce
\begin{align*}
&\int_0^{T} e^{-\rho s}  v_m(X_{s}^*,Z_{s},M_s^*)dM_s^{*,c}+\sum_{0<s\leq T} e^{-\rho s} (v(X_{s}^*,Z_{s},M_{s}^*)-v(X_{s}^*,Z_{s},M_{s-}^*))\\
&\qquad =v(X_{0}^*,Z_{0},M_{0}^*)-v(X_{0}^*,Z_{0},M_0^*)=0.
\end{align*}
Thus, by taking the expectation on both sides of Eq.~\eqref{eq:itoveri-2}, it follows from  $({\cal L}^{\theta^*,c^*} v-\rho v)(x,z,m)= 0$ and $v_x(0,z,m)=\beta$ for all $(x,z,m)\in{\cal D}$ that
\begin{align}\label{eq:value-eq}
 v(x,z,m)=\Ex\left[e^{-\rho T}v(X_{T}^*,Z_{T}^*,M_T^*)\right]+\Ex\left[\int_0^{T}e^{-\rho t}U(c_t^*)dt-\beta\int_0^{T}e^{-\rho t}dL_t^{X^*}\right].
\end{align}
Letting $T\to\infty$ in \eqref{eq:value-eq}. We get from the inequality \eqref{eq:transversality}, DCT and MCT that
\begin{align}\label{eq:optimal-v-2}
 v(x,z,m)&\leq \mathbb{E}\left[\int_0^{\infty}e^{-\rho t}U(c_t^*)dt-\beta\int_0^{\infty}e^{-\rho t}dL_t^{X^*}\right]\nonumber\\
 &\leq \sup_{(\theta,c)\in\mathbb{U}^r}\mathbb{E}\left[\int_0^{\infty}e^{-\rho t}U(c_t)dt-\beta\int_0^{\infty}e^{-\rho t}dL_t^{X}\right].
\end{align}
Combining \eqref{eq:optimal-v-1} with \eqref{eq:optimal-v-2}, we conclude that the inequality in \eqref{eq:optimal-v-2} holds as an equality.
\end{proof}

\begin{proof}[Proof of Lemma \ref{lem:inject-captial}]
We first prove the item (i). For $(\mathrm{v},z,m)\in\R_+^3$, by applying \eqref{eq:w}, we have 
\begin{align}
\Ex\left[\int_0^{\infty} e^{-\rho t}dA^*_t \right]=\Ex\left[ \int_0^{\infty} e^{-\rho t} U(c_t^*)dt\right]-v(x,z,m),
\end{align}
with $x=({\rm v}-z)^+$ by Lemma \ref{lem:equivalence}. 
By uisng the dual relationship, we have that
\begin{align}\label{eq:dual-consumption}
U(c^*(x,z,m))=\frac{(c^*(x,z,m))^p}{p}=\frac{1}{p}\times
\begin{cases}
\displaystyle \lambda^p m^p, &(\lambda m)^{p-1}<y\leq \beta,\\[0.7em]
      \displaystyle y^{\frac{p}{p-1}}, &m^{p-1}<y\leq (\lambda m)^{p-1},\\[0.7em]
      \displaystyle m^p, & y^*(m)\leq  y\leq m^{p-1},\\[0.7em]
      \displaystyle (m^*(y))^p, & y<y^*(m),
\end{cases}
\end{align}
where $y=y(x,z,m)=v_x(x,z,m)$ and $y\to m^*(y)$ is the inverse function of $m\to y^*(m)$. From \eqref{eq:dual-consumption}, we can deduce that $U(c^*(x,z,m))\leq  y^{\frac{p}{p-1}}/|p|$ for all $(x,z,m)\in\R_+^3$. Then, it follows from  Lemma \ref{lem:Yt} and the proof of Lemma \ref{lem:transversality} that
\begin{align}\label{eq:optimal-c}
&\Ex\left[\int_0^{\infty} e^{-\rho t} U(c_t^*)dt\right]\leq \frac{1}{|p|}\Ex\left[\int_0^{\infty} e^{-\rho t} Y_t^{\frac{p}{p-1}} dt\right]<+\infty.
\end{align}
 Then, the desired result  \eqref{eq:dAstarfinite} follows from \eqref{eq:optimal-c}.

Next, we prove the item (ii). For any admissible portfolio $\theta=(\theta_t)_{t\geq0}$, we introduce, for all $t\geq0$,
\begin{align}\label{eq:wealth-theta-1}
\tilde{V}_t^{\theta} &=\textrm{v}+\int_0^t\theta_s^{\top}\mu ds+\int_0^t \theta_s^{\top}\sigma dW_s,\quad
\tilde{A}_t^{\theta}=0\vee \sup_{s\leq t}(Z_{s}-\tilde{V}^{\theta}_{s}).
\end{align}
Note that $c^*_t>0$ for all $t\geq0$. It follows from \eqref{eq:wealth2} and \eqref{eq:wealth-theta-1} that $\tilde{V}_t^{\theta^*}\geq V^{\theta^*,c^*}_t$ for all $t\in\R_+$, and hence
\begin{align}\label{eq:tilde-w}
\Ex\left[\int_0^{\infty} e^{-\rho t}dA^*_t \right]>\Ex\left[\int_0^{\infty} e^{-\rho t}d\tilde{A}_t^{\theta^*} \right]\geq \inf_{\theta}\Ex\left[\int_0^{\infty} e^{-\rho t}d\tilde{A}_t^{\theta} \right]=:\tilde{\mathrm{w}}(\mathrm{v},z).
\end{align}
It is not difficult to check that, for all $(\mathrm{v},z)\in\R_+\times(0,\infty)$,
\begin{align}\label{eq:tilde-v}
\tilde{\mathrm{w}}(\mathrm{v},z)=z\frac{1-\kappa}{\kappa}\left(1+\frac{(\mathrm{v}-z)^+}{z}\right)^\frac{\kappa}{\kappa-1}.
\end{align}
On the other hand, for any admissible portfolio $\theta=(\theta_t)_{t\geq0}$, let us consider an auxiliary process
\begin{align}\label{eq:wealth-theta-2}
\hat{V}_t^{\theta} &=\textrm{v}+\int_0^t\theta_s^{\top}\mu ds+\int_0^t \theta_s^{\top}\sigma dW_s- \lambda m t, \quad
\hat{A}_t^{\theta}=0\vee \sup_{s\leq t}(-\tilde{V}^{\theta}_{s}),\quad t\geq 0.
\end{align}
Note that $c^*_t\geq \lambda m$ for all $t\in\R_+$. Then, it follows from \eqref{eq:wealth2} and \eqref{eq:wealth-theta-2} that $\hat{V}_t^{\theta^*}\geq V^{\theta^*,c^*}_t$ for all $t\in\R_+$, and hence
\begin{align}\label{eq:hat-w}
\Ex\left[\int_0^{\infty} e^{-\rho t}dA^*_t \right]>\Ex\left[\int_0^{\infty} e^{-\rho t}d\hat{A}_t^{\theta^*} \right]\geq \inf_{\theta}\Ex\left[\int_0^{\infty} e^{-\rho t}d\hat{A}_t^{\theta} \right]=:\hat{\mathrm{w}}(\mathrm{v},m).
\end{align}
In a similar fashion, we can verify that, for all $(\mathrm{v},m)\in\R_+\times(0,\infty)$,
\begin{align}\label{eq:hat-v}
\tilde{\mathrm{w}}(\mathrm{v},m)=\frac{\lambda m}{\alpha+\rho}e^{-\frac{\alpha+\rho}{\lambda m}\mathrm{v}^+}.
\end{align}
Consequently, combining \eqref{eq:tilde-w}, \eqref{eq:tilde-v}, \eqref{eq:hat-w} and \eqref{eq:hat-v},  we complete the proof of the lemma.
\end{proof}

\begin{proof}[Proof of Corollary \ref{coro:no-constraint}]
Denote by $v_{\lambda}(x,z,m)$ the value function and $\mathbb{U}^{\rm r}_{\lambda}$ the admissible set to highlight the dependence on $\lambda$. Then,  for $0\leq \lambda_1\leq \lambda_2\leq 1$, it follows from the definition of the admissible set that $\mathbb{U}^{\rm r}_{\lambda_2} \subset \mathbb{U}^{\rm r}_{\lambda_1}$, which yields  $v_{\lambda_2}(x,z,m)\leq v_{\lambda_1}(x,z,m)$ for all $(x,z,m)\in\R_+^3$. 

When $\lambda=0$, by Proposition \ref{prop:sol-v}, we have $y^*(m)= m^{1-p}$ for $m\geq \beta^{1/(p-1)}$, and for $(r,z,m)\in\R_+\times [\beta^{1/(p-1)},\infty)$, the dual function $\hat{v}(y,z,m)$ becomes
\begin{align}\label{u-non-constraint}
\hat{v}(y,z,m)= \frac{1}{\beta}C_3(m)y+\beta^{\frac{\rho}{\alpha}}C_4(m)y^{-\frac{\rho}{\alpha}}+\frac{(1-p)^3}{p(\rho(1-p)-\alpha p)}y^{\frac{p}{p-1}}+z\left( y-\frac{\beta^{-\kappa+1}}{\kappa}y^{\kappa}\right)
\end{align}
Moreover, in view of $y^*(m)= m^{1-p}$, it holds that
\begin{align*}
C_6(m)
=\frac{\alpha^3\beta^{-\frac{\rho}{\alpha}}}{\rho(\alpha+\rho)^2(\rho(1-p)-\alpha p)}m^{\frac{\alpha p-(1-p)\rho}{\alpha}}.
\end{align*}
As a result, we can deduce that 
\begin{align}\label{eq:C3C4}
C_3(m)=\frac{(1-p)^2}{\rho(1-p)-\alpha p}\beta^{\frac{p}{p-1}},\quad C_4(m)=0,\quad \forall m\geq \beta^{\frac{1}{p-1}}.
\end{align}
It follows from  \eqref{u-non-constraint} and \eqref{eq:C3C4} that
\begin{align}\label{u-non-constraint-2}
\hat{v}(y,z,m)=\frac{(1-p)^2}{\rho(1-p)-\alpha p}\beta^{\frac{1}{p-1}}y+\frac{(1-p)^3}{p(\rho(1-p)-\alpha p)}y^{\frac{p}{p-1}}
+z\left( y-\frac{\beta^{-\kappa+1}}{\kappa}y^{\kappa}\right),
\end{align}
which is independent of the variable $m$. Then, by Proposition \ref{prop:sol-v}, Lemma \ref{lem:valuefunc-v}, Lemma \ref{lem:feedback-control} and Theorem \ref{thm:verification}, we get the desired results \eqref{valuefunc-non-constraint}-\eqref{feedbackcontrol-non-constraint}.
\end{proof}

\vspace{0.4in}
\noindent\textbf{Acknowledgements} The authors are grateful to two anonymous referees for their helpful comments and suggestions. L. Bo and Y. Huang are supported by National Natural Science of Foundation of China (No. 12471451), Natural Science Basic Research Program of Shaanxi (No. 2023-JC-JQ-05), Shaanxi Fundamental Science Research Project for Mathematics and Physics (No. 23JSZ010) and Fundamental Research Funds for the Central Universities (No. 20199235177). X. Yu is supported by the Hong Kong RGC General Research Fund (GRF) under grant no. 15306523, the Hong Kong Polytechnic University research grant under no. P0045654 and the Research Centre for Quantitative Finance at the Hong Kong Polytechnic University under grant no. P0042708.

\end{document}